\newtheorem{theorem}{Theorem}[section] 
\newtheorem{proposition}[theorem]{Proposition} 
\newtheorem{corollary}[theorem]{Corollary} 
\newtheorem{lemma}[theorem]{Lemma} 
\theoremstyle{definition} 
\newtheorem{example}[theorem]{Example}
\newtheorem{remark}[theorem]{Remark}
\newcommand{\CC}{{\mathbb C}} 
\newcommand{\FF}{{\mathbb F}}
\newcommand{\NN}{{\mathbb N}}
\newcommand{\RR}{{\mathbb R}} 
\newcommand{\cB}{{\mathcal B}}
\newcommand{\cF}{{\mathcal F}} 
\newcommand{\cH}{{\mathcal H}} 
\newcommand{\cI}{\mathcal{I}}
\newcommand{\cK}{{\mathcal K}}
\newcommand{\cN}{{\mathcal N}} 
\newcommand{\cR}{{\mathcal R}} 
\newcommand{\cS}{{\mathcal S}}
\newcommand{\cV}{{\mathcal V}} 
\newcommand{\cW}{{\mathcal W}} 
\newcommand{\cX}{\mathcal{X}}
\newcommand{\cY}{{\mathcal Y}}
\newcommand{\cZ}{{\mathcal Z}} 
\newcommand{\bC}{\boldsymbol{C}}
\newcommand{\bW}{\boldsymbol{W}}
\newcommand{\bY}{\boldsymbol{Y}}
\newcommand{\bH}{\boldsymbol{H}}
\newcommand{\bV}{\boldsymbol{V}}
\newcommand{\ba}{\boldsymbol{a}}
\newcommand{\bef}{\boldsymbol{f}}
\newcommand{\bu}{\boldsymbol{u}}
\newcommand{\bz}{\boldsymbol{z}}
\newcommand{\bx}{\boldsymbol{x}}
\newcommand{\bw}{\boldsymbol{w}}
\newcommand{\by}{\boldsymbol{y}}
\newcommand{\fK}{\mathfrak{K}}
\newcommand{\diag}{\operatorname{diag}}
\newcommand{\argmin}{\operatorname{argmin}}
\newcommand{\dom}{\operatorname{Dom}} 
\newcommand{\Ra}{\Rightarrow}
\newcommand{\ra}{\rightarrow} 
\newcommand{\ol}{\overline}
\let\phi=\varphi 
\newcommand{\iac}{\mathrm{i}}
\newcommand{\deriv}{\operatorname{D}}
\newcommand{\lin}{\operatorname{Lin}}
\newcommand{\norm}[1]{\left\lVert#1\right\rVert}
\newcommand\innerprod[2]{\left\langle #1, #2 \right\rangle}
\title[Localisation of Regularised and Multiview Learning]{Localisation of Regularised and Multiview Support Vector Machine Learning}
\author[A. Gheondea]{Aurelian Gheondea} 
\address{Institutul de Matematic\u a al Academiei Rom\^ane, 21 Calea Grivi\c tei  010702 
Bucure\c sti, Rom\^ania \emph{and}
Department of Mathematics, Bilkent University, 06800 Bilkent, Ankara, 
Turkey} 
\email{A.Gheondea@imar.ro \textrm{and} aurelian@fen.bilkent.edu.tr } 
\author[C. T\.\i lk\.\i]{Cankat T\.\i lk\.\i}
\address{Department of Mathematics and Division of Computational Modeling and Data Analytics, Virginia 
Polytechnic Institute and State University, Blacksburg Virginia, 24061 U.S.A.}
\email{cankat@vt.edu}
\begin{document}

\begin{abstract}
We prove some representer theorems for a localised version of a semisupervised,  
manifold regularised and multiview support vector machine learning 
problem introduced by H.Q.~Minh, L.~Bazzani, and V.~Murino,  \textit{Journal of Machine Learning 
Research}, \textbf{17}(2016) 1--72, that involves operator valued positive semidefinite kernels and their 
reproducing kernel Hilbert spaces. The results concern general cases when convex or nonconvex loss
functions and finite or infinite dimensional underlying Hilbert spaces are considered. 
We show that the general framework allows infinite dimensional Hilbert spaces and nonconvex loss 
functions for some special cases, in particular in case the loss functions are G\^ateaux 
differentiable. 
Detailed calculations are provided for the exponential least squares loss functions that 
lead to systems of partially nonlinear equations for which a particular different types of Newton's 
approximation methods based on the interior point method 
can be used. Some numerical experiments are performed on a toy model that illustrate
the tractability of the methods that we propose.
\end{abstract}

\keywords{operator valued reproducing kernel Hilbert spaces, regularised and multiview learning, 
support vector machine learning, loss functions,
representer theorem}

\subjclass[2010]{Primary 68T05; Secondary 46E22, 46G05}

\maketitle

\section{Introduction}

Representer theorems are of a special interest in machine learning due to the fact that 
they reduce the problem of finding a minimiser for the learning map 
to the vector space spanned by the kernel functions, or operators, at the labeled 
and unlabeled input data. For classical versions of representer theorem, we recommend 
the monographs  \cite{Schoelkopf} and \cite{SteinwartChristmann}. There is a large 
literature on generalised representer theorems but in this article we refer to the unifying 
framework in vector valued reproducing 
kernel Hilbert spaces for semisupervised, manifold regularised and multiview machine 
learning, as investigated by \cite{MBM} and the vast literature cited there. 

The article \cite{MBM} has remarkable contributions to the domain of representer 
theorems in support vector machine learning, firstly 
by unifying many variants of these theorems referring to semisupervised, regularised, 
manifold regularised,  multiview machine learning and then by considering underlying 
Hilbert spaces that are infinite dimensional. Recently, infinite dimensional Hilbert spaces 
in learning with kernels have been of interest, e.g.\ see \cite{Lambert}.
However, although the general representer theorem, Theorem 2 in \cite{MBM}, 
is stated for infinite dimensional spaces,
this turns out to be problematic, as we will see in Remark~\ref{r:inf}. Also, there is an
interest for applications to learning problems in which loss functions may not 
be convex, cf.\ \cite{ZMY}, or even indefinite, cf.\ \cite{Kwon}.

In this article we are concerned with questions triggered by the investigations in 
\cite{MBM} and \cite{ZMY},
such as: to which extent can one allow the underlying input spaces be infinite 
dimensional 
and to which extent nonconvex loss functions can be used in the learning process. In 
this respect, we
propose a localisation of the minimisation of the semisupervised regularised, manifold 
regularised, and multiview machine learning 
problem studied in \cite{MBM}, in the sense that the output spaces and the loss 
functions may be different for each labeled and unlabeled input point. For this 
approach we use a generalised version of vector valued reproducing kernel Hilbert 
spaces with bundles of spaces and operators. We think that the localised framework 
offers more flexibility to the learning 
problem and it is quite natural, especially when semisupervised multiview learning is 
considered, that the 
output spaces and the loss functions depend locally on the input points.

{ There are a few reasons that motivate the localised versions that we consider in this article. Firstly,
for some of the labeled input points, in the multivariable case, some of the components of the labels (properties) 
may be missing or some 
additional components of the labels may be necessary in order to allow reliable information. This 
means that the underlying vector spaces of labels may have different dimensions and hence, making the vector 
spaces of the labels depend on the input points solves this obstruction. Secondly, when the input set $X$ shows 
a certain homogeneity, the localised version may not be needed but, when this set is more heterogeneous, the 
localised version brings the necessary flexibility. To be more precise, let us imagine the following scenario. The 
input set $X$ is a finite union of sets $X_i$, $i=1,\ldots,N$, where each $X_i$ shows homogeneity. By 
homogeneity we mean that, for each $i$, the properties (labels) associated to $x\in X_i$ are of the same type, in 
particular they live in the same Hilbert space $\cY_i$. 
However,  for different $i\neq j$, where $i,j\in \{1,\ldots,N\}$, the properties (labels) of the points $x\in X_i$ when 
compared to the properties of the points $x\in X_j$ may be different, meaning that the Hilbert spaces of labels 
$\cY_i$ and $\cY_j$ should be different. For example, if the set $X$ is the collection of all the cells 
of a body, we can see $X$ as the union of all its organs $X_i$, for $i=1,\ldots,N$, and it is clear that due to 
special functions that different organs have in the body, the properties (labels) of cells in different organs are 
generally of different type. 
Moreover, the learning function, which is calculated in terms of loss functions, may require 
different loss functions for different points that have different types of labels. For example, in the scenario of the 
body viewed as the union of its organs, comparison of properties of different cells should be performed differently 
for cells belonging to different organs. This justifies the dependence of the loss functions of the input points as 
well. As a further research project, the localised version of the machine learning problem 
might be used for investigating the process of spreading a malignant tumor (metastasis) in different organs of a 
body, provided that real data will be available and appropriate mathematical models will be obtained.}

Following \cite{MBM}, the direction of multiview learning we consider in this work 
is coregularisation, 
see e.g.\ \cite{Brefeld}, \cite{SindhwaniRosenberg}, \cite{Rosenberg}, and \cite{Sun}. In 
this approach, different hypothesis spaces are used to construct target functions based 
on different views of the input data, such as different features or modalities, and a 
data dependent regularisation term is used to enforce consistency of output values 
from different views of the same input example. The resulting target functions, each 
corresponding to one view, are then naturally combined together in a certain 
fashion to give the final solution.

The direction of semisupervised learning we follow here is manifold regularisation, 
cf.\ \cite{Belkin}, \cite{Brouard}, and \cite{MinhSindhwani}, which attempts to learn the 
geometry of the input space by exploiting the given unlabeled data. The latter two 
papers are recent generalisations of the original scalar version of manifold regularisation 
of \cite{Belkin} to the vector valued setting. In \cite{Brouard}, a vector valued version of 
the graph Laplacian $L$ is used while in \cite{MinhSindhwani} $L$ is a general 
symmetric, positive operator, including the graph Laplacian. The vector valued setting 
allows one to capture possible dependencies between output variables by the use of, for 
example, an output graph Laplacian. For a comprehensive discussion on 
semisupervised learning and a thorough comparison with supervised and unsupervised 
learning, see the collection \cite{Chapelle}.

Because reproducing kernel 
Hilbert spaces make an essential ingredient in these representer type theorems, some historical 
considerations are in order.
The classical article of  \cite{Aronszajn} provides an abstract formalisation of scalar 
reproducing kernel Hilbert spaces of many previous investigations and applications 
related to spaces of analytic 
functions, partial differential equations, and harmonic analysis, as of  \cite{Mercer}, 
\cite{Bergman}, \cite{Moore},  \cite{Bochner}, and  \cite{Godement}. From the point of 
view of probability theory,  \cite{Kolmogorov} investigated
linearisations, or feature spaces, associated to scalar positive semidefinite kernels. 
 An equivalent formulation of reproducing kernel Hilbert spaces by
Hilbert spaces continuously embeded in a quasicomplete locally convex space was 
investigated by \cite{Schwartz} while a more group theoretical approach was performed in \cite{Krein1} and 
\cite{Krein2}. Then, scalar valued 
reproducing kernel Hilbert spaces found many applications in machine learning, see \cite{SteinwartChristmann}
and \cite{Schoelkopf} for a comprehensive list of literature in this direction.

Motivated by problems in operator theory and operator algebras, operator valued positive semidefinite kernels 
and either their linearisations (Kolmogorov decompositions) or their reproducing kernel Hilbert spaces
have been considered by  \cite{BSzNagy}, \cite{Pedrick},  \cite{ParthasarathySchmidt}, 
\cite{EvansLewis}, and   \cite{Constantinescu}, to cite a few. Investigations of operator 
valued Hermitian kernels that yield Krein spaces have been performed by
\cite{ConstantinescuGheondea1}, \cite{ConstantinescuGheondea2}. 
More general operator valued positive semidefinite kernels that yield reproducing kernel 
VH-spaces have been considered by \cite{Gheondea}, \cite{AyGh2015}, 
\cite{AyGh2017}, and \cite{AyGh2019}. { To make things more precise, recall that a vector space
$\cS$ endowed with a map $\cS\ni s\mapsto s^*\in\cS$ which is conjugate linear and involutive, 
in the sense that $(s^*)^*=s$ for all $s\in \cS$, is called a \emph{$*$-vector space}. If, in addition, a convex cone 
$\cS^+$ on $\cS$ is specified such that for any $s\in \cS^+$ we have $s=s^*$, we call $\cS$ an ordered 
$*$-space and, when a complete locally convex topology on $\cS$ is specified 
and which is related in a certain fashion with 
the convex cone $\cS^+$, one calls $\cS$ an \emph{admissible space}. The concept of admissible space is a 
generalisation of the concept of $C^*$-algebra which is a mainstream domain in functional analysis, partly 
due to its strong connections with quantum theories, e.g.\ see \cite{Davidson} and \cite{Blackadar}.
Admissible spaces $\cS$ are then used to 
consider gramians $[\cdot,\cdot]\colon \cX\times\cS\ra \cS$, for some vector space $\cX$, 
which are vector valued inner products and that induce a certain topology on $\cX$. If this topology is complete 
then we call $\cX$ a VH-space (Vector Hilbert space). The concept of VH-space is a generalisation of the 
concept of  Hilbert modules over $C^*$-algebras, e.g.\ see \cite{Lance}, but it appeared independently and 
related to problems in probability theory, see \cite{Loynes1}, \cite{Loynes2}, and \cite{Loynes3}.
In case we have a positive semidefinite kernel 
$K\colon X\times X\ra \cS$, for some admissible space $\cS$, one can define a reproducing kernel VH-space 
as a generalisation of the classical reproducing kernel Hilbert space. 
For details and many examples see, for example, 
\cite{AyGh2017}.} 

During the last twenty years, operator valued 
positive semidefinite kernels and their reproducing kernel Hilbert spaces and feature 
spaces (linearisations, or Kolmogorov decompositions) became of 
interest in the theory of machine learning, see 
\cite{Micchelli}, \cite{Carmelli}, \cite{Caponnetto}, \cite{MinhSindhwani}, \cite{Kadri},
but the investigations have been somehow started from scratch, apparently unaware of 
the previous works.  { More recently, in a sequence of articles, \cite{Hashimoto21}, \cite{Hashimoto22},
\cite{Hashimoto23a}, \cite{Hashimoto23b}, and \cite{Hashimoto24}, it is shown that, using the 
$C^*$-algebra-valued gramians (vector valued inner products), 
one can learn function- and operator-valued maps, one can design positive definite kernels for structured data 
using the noncommutative product, one can use the norm of the $^*$-algebra to alleviate the dependency of 
generalisation error bound on the output dimension using the generalisation of kernel mean embedding by 
means of $C^*$-algebras, one can analyse positive operator valued measures 
and spectral measures, one can continuously combine multiple models and use the tools for functions which 
can be applied to ensemble, multitask, and meta-learning (the noncommutative product structures in 
$C^*$-algebras induce interactions among models), and one can construct group equivariant neural networks 
using the products in group $C^*$-algebras.}

We briefly describe the contents of this article. In order to introduce the localised version 
of the semisupervised, regularised, manifold regularised, and multiview learning 
problem, in Subsection~\ref{ss:ovk} we firstly consider operator valued 
positive semidefinite kernels for which the entries are localised by a bundle of Hilbert 
spaces. For these kernels, we 
show how their reproducing kernel Hilbert spaces are constructed, their relation to 
the linearisations (Kolmogorov 
decompositions, feature spaces) and basic properties. Although we have been inspired 
by the approach in \cite{ConstantinescuGheondea1}, 
we provide in the appendices proofs for all the statements we make since in this form 
they cannot be found elsewhere. In \cite{Pedrick}, reproducing kernel Hilbert spaces are 
considered for kernels over bundles of locally convex spaces and this might be a very 
interesting research problem in case applications to machine learning theory might be 
found.

Then, in Subsection~\ref{ss:grmlp} we present the localised version of the 
semisupervised, regularised, manifold regularised, and multiview  machine
learning problem inspired by \cite{MBM}. Under rather general assumptions, we prove in 
Subsection~\ref{ss:ovk} the 
representer theorem for general loss functions but assuming the input spaces at all input 
points be finite dimensional. The finite dimensionality assumption can be relaxed to 
the condition that the span of the input kernel operators at the input points is closed, as 
Proposition~\ref{p:minimizer} shows. In this section, most of the underlying Hilbert 
spaces may be complex and only in some special cases we have to impose the 
condition that they are real.

Further on, in Subsection~\ref{ss:rtldlf} we consider real underlying Hilbert spaces and 
loss functions that are G\^ateaux 
differentiable and then, in Theorem~\ref{t:gatmin} we show that, under this additional 
assumption, a general representer theorem can be obtained for infinite dimensional 
input spaces. In Subsection~\ref{ss:lslf} we work out the details for the loss 
functions defined by the least squares which leads to a linear problem in terms of the 
unkown coefficients, as in \cite{MBM}, while
in Subsection~\ref{ss:elslf} we work out the details for the loss functions 
defined by the exponential least squares which lead to a mixed problem, linear and nonlinear. 
{Finally, in
Subsection~\ref{ss:aels} we tackle algorithms to obtain approximations of the solutions for the latter machine 
learning problem by damped Newton approximation methods, as presented in \cite{MonteiroPang}, 
\cite{Byrd99}, \cite{Byrd00}, \cite{Waltz}. Since in the framework of systems of nonlinear 
equations solutions are generally hot topics in current research, much work 
remains to be done. Some numerical 
experiments are performed on a toy model for the latter case and the algorithm is tested. We show the 
robustness of the method on this toy model.}\medskip

\textbf{Acknowledgements}: {The authors thank Naci Saldi and Sorin Costiner for very helpful discussions on 
available algorithms to solve nonlinear systems of equations and optimisation problems.

\section{A General Semisupervised Regularised and Multiview
Machine Learning Problem}\label{s:rmp}

\subsection{Operator Valued Kernels.}\label{ss:ovk}
{ The aim of this subsection is to introduce kernels that take values in bundles of bounded linear
operators on different Hilbert spaces and which will make the theoretical foundations for the localised machine 
learning problem. Actually, we prove that passing from nonlocalised to localised machine learning problem has 
the advantage of being very flexible and allowing 
a very large nonhomogeneity of the input data while not bringing
new obstructions. In particular, this shows that, with minimal changes, similar results and algorithms can be 
obtained for localised machine learning problems.}

Let $X$ be a nonempty set and $\bH=\{\cH_x\}_{x\in X}$ a bundle of 
Hilbert spaces over the field $\FF$, that is either $\RR$ or $\CC$, 
with base $X$, that is, $\cH_x$ is a Hilbert space over 
$\FF$, for any $x\in X$. In order to avoid confusion, let us note that the Hilbert spaces 
$\cH_x$ are actually tagged Hilbert spaces, meaning that for $x\neq y$ the spaces 
$\cH_x$ and $\cH_y$ are disjoint. An \emph{$\bH$-operator valued 
kernel} $K$ is a mapping defined on $X\times X$ such that 
$K(y,x)\in\cB(\cH_x,\cH_y)$ for all $x,y\in X$. Here and throughout this article, if $\cH$ 
and $\cK$ are two Hilbert spaces over the same filed $\FF$, we denote by 
$\cB(\cH,\cK)$ the Banach space of all linear and bounded operators 
$T\colon \cH\ra\cK$, endowed with the operator norm. We denote by $\fK(X;\bH)$ the 
collection of all $\bH$-operator
valued kernels on $X$ and it is clear that $\fK(X;\bH)$ is a vector space over $\FF$.

Given $K\in\fK(X;\bH)$, the adjoint kernel $K^*$ is defined 
by $K^*(x,y)=K(y,x)^*$ for all $x,y\in X$. Clearly $K^*\in\fK(X;\bH)$. The kernel $K$ is called \emph{Hermitian} or 
\emph{symmetric} if 
$K=K^*$.  If $\FF=\CC$ then any kernel $K$ is a linear combination of two Hermitian kernels, more precisely, letting
\begin{equation}\label{e:rem}\Re(K):=(K+K^*)/2,\quad \Im(K):=(K-K^*)/2\iac,\end{equation}
we have
\begin{equation}\label{e:remek}
K=\Re(K)+\iac \Im(K).
\end{equation}
It is easy to see that $K\mapsto K^*$ is an involution, that is, it is conjugate linear and involutive.
In this way, $\fK(X;\bH)$ is a $*$-vector space, { that is, $\fK(X;\bH)$ is a vector space endowed with an involution, e.g.\ see \cite{AyGh2015}.}

Let $\cF(X;\bH)$ be the vector space over $\FF$ of all cross-sections 
$f\colon X\rightarrow \bigcup_{x\in X}\cH_x$, that is, $f(x)\in \cH_x$ for all $x\in X$. 
Addition and multiplication with scalars in $\cF(X;\bH)$ are defined pointwise.
Equivalently, $\cF(X;\bH)$ can be naturally identified with the vector space $\prod\limits_ {x\in X} \cH_x$ and 
then any cross-section $f\in \prod\limits_ {x\in X} \cH_x$ 
can be written $f=\{f_x\}_{x\in X}$. For each $x\in X$ and $h\in\cH_x$ we consider the cross-section 
$\widehat h\in\cF(X;\bH)$, defined by
\begin{equation}\label{e:deltax}
\widehat h(y)=\begin{cases}  h,& \mbox{ if }y=x,\\ 0_{\cH_y},& \mbox{ otherwise.}
\end{cases}
\end{equation}
In particular, since $\bH$ consists of tagged Hilbert spaces this means that if either 
$h\in\cH_x$ or $l\in\cH_y$ are not null and $x\neq y$ then $\widehat h\neq \widehat l$. 
However, if $h=0_{\cH_x}$ and $l=0_{\cH_y}$ with $x\neq y$, 
then $\widehat h=\widehat l=0_{\cF(X;\bH)}$.
Also, for any $f\in\cF(X;\bH)$ we have
\begin{equation}\label{e:fesum}
f=\sum_{x\in X} \widehat f_x,
\end{equation}
where $f_x:=f(x)$ for all $x\in X$. Clearly, for each $y\in X$ the sum 
$\sum_{x\in X} \widehat f_x(y)$ has at most one nonnull term and hence the sum in 
\eqref{e:fesum} converges pointwise.

Let $\cF_0(X;\bH)$ be the vector subspace consisting of all $f\in\cF(X;\bH)$ with 
finite support. Clearly, any cross-section of type $\widehat h$, for some $h\in\cH_x$, belongs to $\cF_0(X;\bH)$ and, for 
any $f\in\cF_0(X;\bH)$ there exists uniquely distinct elements $x_1,\ldots,x_n\in X$ and 
$h_i\in\cH_{x_i}$, $i=1,\ldots,n$, such that 
\begin{equation*} f=\sum_{i=1}^n \widehat h_i.
\end{equation*}
 An inner product $\langle\cdot,\cdot\rangle_0\colon \cF_0(X;\bH)\times 
\cF_0(X;\bH)\ra \FF$ can be defined by
\begin{equation}\label{e:ipzero} \langle f,g\rangle_0=\sum_{x\in X}\langle f(x),g(x)
\rangle_{\cH_x},
\quad f,g\in \cF_0(X;\bH).
\end{equation}
In addition, 
let us observe that the sum in \eqref{e:ipzero} makes sense in the more general case when $f,g\in\cF(X;\bH)$ 
and at least one of $f$ or $g$ has finite support, the other can be arbitrary.  

Associated to the kernel $K\in\fK(X;\bH)$ there is a 
sesquilinear form $\langle \cdot,\cdot\rangle_K\colon \cF_0(X;\bH)\times 
\cF_0(X;\bH)\rightarrow \FF$ defined by
\begin{equation}\label{e:ipk} \langle f,g\rangle_K=\sum_{x,y\in X}\langle K(y,x) 
f(x),g(y)\rangle_{\cH_y},\quad f,g\in \cF_0(X;\bH),
\end{equation}
that is, $\langle\cdot,\cdot\rangle_K$ is linear in the first variable 
and conjugate linear in the second variable. Also, the sesquilinear form $\langle\cdot,\cdot
\rangle_K$ is Hermitian, that is, $\langle f,g\rangle_K=\overline{\langle g,f\rangle_K}$ for all $f,g\in \fK(X;\bH)$, 
if and only if the kernel $K$ is Hermitian. 

A \emph{convolution operator} $C_K\colon \cF_0(X;\bH)\ra \cF(X;\bH)$ can be defined by
\begin{equation}\label{e:conv} (C_Kf)(y)=\sum_{x\in X} K(y,x)f(x),\quad f\in
\cF_0(X;\bH),\ y\in X.
\end{equation}
Clearly $C_K$ is a linear operator and, with notation as in \eqref{e:ipzero} 
and \eqref{e:ipk} we have
\begin{equation}\label{e:convzero} \langle C_Kf,g\rangle_0=\langle f,g\rangle_K,
\quad f,g\in \cF_0(X;\bH).
\end{equation}

By definition, the kernel $K$ is \emph{positive semidefinite} if the sesquilinear form 
$\langle\cdot,\cdot\rangle_K$ 
is nonnegative, that is, if $\langle f,f\rangle_K\geq 0$ for all 
$f\in\cF_0(X;\bH)$, equivalently, if for all $n\in \NN$,
all $x_1,\ldots, x_n\in X$, and all $h_1\in \cH_{x_1},\ldots, h_n\in \cH_{x_n}$, 
we have
\begin{equation}\label{e:psk} \sum_{i,j=1}^n \langle K(x_j,x_i)h_i,h_j
\rangle_{\cH_{x_j}} \geq 0.
\end{equation}
An equivalent way of expressing \eqref{e:psk} is to say that the operator block 
matrix $[K(x_j,x_i)]_{i,j=1}^n$, when 
viewed as a bounded linear operator acting in the orthogonal 
direct Hilbert sum $\cH_{x_1}\oplus \cdots\oplus 
\cH_{x_n}$, is a positive semidefinite operator. On the other hand, the kernel $K$ is 
positive semidefinite if and only if the convolution operator, as defined in 
\eqref{e:conv}, is positive semidefinite when viewed 
as an operator on the inner product space 
$(\cF_0(X;\bH);\langle\cdot,\cdot\rangle_0)$, more precisely,
\begin{equation}
\langle C_Kf,f\rangle_0\geq 0,\quad f\in\cF_0(X;\bH).
\end{equation}
It is easy to see that, if $\FF=\CC$ then, if 
the kernel $K$ is positive semidefinite then it is Hermitian. If $\FF=\RR$ then this is not true and hence, for this case 
we confine to those positive semidefinite kernels that are Hermitian, more precisely, in this case, in addition to the 
property \eqref{e:psk}, by a positive semidefinite kernel we implicitly understand that it is Hermitian as well.
The collection of all
positive semidefinite $\bH$-operator valued kernels on $X$ is denoted by $\fK^+(X;\bH)$ and it is easy
to see that $\fK^+(X;\bH)$ is a strict convex cone of the $*$-vector space $\fK(X;\bH)$.

Given an arbitrary bundle of Hilbert 
spaces $\bH=\{\cH_x\}_{x\in X}$ and an $\bH$-operator valued Kernel $k$, a \emph{Hilbert space
linearisation}, or a 
\emph{Kolmogorov decomposition}, or a \emph{feature pair} of $K$ is, by definition, a pair $(\cK;V)$ subject to
the following conditions.
\begin{itemize}
\item[(kd1)] $\cK$ is a Hilbert space over $\FF$.
\item[(kd2)] $V=\{V(x)\}_{x\in X}$ is an 
operator bundle such that $V(x)\in \cB(\cH_x,\cK)$ for all $x\in X$.
\item[(kd3)] $K(x,y)=V(x)^* V(y)$ for all $x,y\in X$.
\end{itemize}
The linearisation $(\cK;V)$ is called \emph{minimal} if
\begin{itemize}
\item[(kd4)] $\cK$ is the closed span of $\{V(x)\cH_x\mid x\in X\}$.
\end{itemize}

The following theorem is a general version of some classical results, e.g.\ 
\cite{Kolmogorov}, \cite{ParthasarathySchmidt}, \cite{EvansLewis}. This is also a special
case of \cite{ConstantinescuGheondea1}. We include its proof in the Appendix A.

\begin{theorem}\label{t:kolmogorov} 
Given an arbitrary bundle of Hilbert 
spaces $\bH=\{\cH_x\}_{x\in X}$ and an $\bH$-operator valued kernel $K$, the 
following assertions are equivalent.
\begin{itemize}
\item[\emph{(a)}] $K$ is positive semidefinite.
\item[\emph{(b)}] $K$ has a Hilbert space linearisation.
\end{itemize}
In addition, if $K$ is positive semidefinite then a minimal Hilbert space linearisation 
$(\cK;V)$ exists and it is unique, modulo unitary equivalence, that 
is, for any other minimal Hilbert space linearisation $(\cK';V')$ of $K$ there exists
a unitary operator $U\colon \cK'\ra \cK$ such that $V(x)=UV'(x)$, for all $x\in X$.
\end{theorem}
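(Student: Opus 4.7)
The plan is to follow a GNS-type construction, adapted to the setting of a bundle of Hilbert spaces and operator-valued kernels.

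First, the easy direction $(b)\Rightarrow(a)$ is a short computation. Suppose $(\cK;V)$ is a linearisation of $K$. For any $f\in\cF_0(X;\bH)$, using $K(y,x)=V(y)^*V(x)$ and bilinearity of the sum in \eqref{e:ipk},
\begin{equation*}
\langle f,f\rangle_K=\sum_{x,y\in X}\langle V(y)^*V(x)f(x),f(y)\rangle_{\cH_y}=\Bigl\|\sum_{x\in X}V(x)f(x)\Bigr\|_\cK^2\geq 0,
\end{equation*}
where the inner sum is finite because $f$ has finite support. This gives \eqref{e:psk} after specialising to $f=\sum_i\widehat h_i$.

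For the harder direction $(a)\Rightarrow(b)$, I would run the standard quotient–completion procedure on the inner product space $(\cF_0(X;\bH),\langle\cdot,\cdot\rangle_K)$. Positive semidefiniteness of $K$ makes $\langle\cdot,\cdot\rangle_K$ a nonnegative Hermitian sesquilinear form (Hermiticity for the real case is built into the definition, and the complex case follows by polarisation), so the Cauchy–Schwarz inequality applies. Let $\cN=\{f\in\cF_0(X;\bH)\mid\langle f,f\rangle_K=0\}$, form the quotient $\cF_0(X;\bH)/\cN$ with the induced inner product, and let $\cK$ be its completion. Define $V(x)\colon\cH_x\ra\cK$ by $V(x)h=[\widehat h\,]$, the equivalence class of the cross-section from \eqref{e:deltax}. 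Boundedness of $V(x)$ comes from $\|V(x)h\|_\cK^2=\langle\widehat h,\widehat h\rangle_K=\langle K(x,x)h,h\rangle_{\cH_x}\leq\|K(x,x)\|\,\|h\|_{\cH_x}^2$. The reproducing identity is verified directly:
\begin{equation*}
\langle V(y)h,V(x)l\rangle_\cK=\langle\widehat h,\widehat l\rangle_K=\langle K(x,y)h,l\rangle_{\cH_x},\qquad h\in\cH_y,\ l\in\cH_x,
\end{equation*}
so $V(x)^*V(y)=K(x,y)$, proving (kd3). Minimality (kd4) is automatic since by construction the image of $\cF_0(X;\bH)$ is dense in $\cK$ and every element of $\cF_0(X;\bH)$ is a finite sum of cross-sections $\widehat h$ by \eqref{e:fesum}.

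For uniqueness up to unitary equivalence of the minimal linearisation, I would let $(\cK;V)$ and $(\cK';V')$ be two minimal linearisations and define $U_0$ on the dense subspace $\lin\{V'(x)h\mid x\in X,\ h\in\cH_x\}\subset\cK'$ by $U_0\sum_i V'(x_i)h_i=\sum_i V(x_i)h_i$. Using (kd3) for both linearisations one checks
\begin{equation*}
\Bigl\|\sum_i V'(x_i)h_i\Bigr\|_{\cK'}^2=\sum_{i,j}\langle K(x_j,x_i)h_i,h_j\rangle_{\cH_{x_j}}=\Bigl\|\sum_i V(x_i)h_i\Bigr\|_\cK^2,
\end{equation*}
so $U_0$ is well-defined (in particular, independent of the chosen representation) and isometric on a dense subspace; extending by continuity and using minimality of the target side gives a unitary $U\colon\cK'\ra\cK$ with $V(x)=UV'(x)$ for all $x\in X$.

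The main subtlety, and the only point that is not purely mechanical, is keeping the bundle structure coherent throughout the quotient–completion step: one must be careful that the maps $V(x)$, each defined on a different Hilbert space $\cH_x$, are individually well-defined and bounded, and that the density argument uses only finite sums $\sum_i \widehat h_i$ which are the generic elements of $\cF_0(X;\bH)$. Everything else is the standard Kolmogorov/GNS argument transplanted to this operator-valued, bundle-indexed setting.
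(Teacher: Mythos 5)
Your proposal is correct and follows essentially the same route as the paper's Appendix~A proof: the direct computation for (b)$\Ra$(a), the quotient--completion (GNS-type) construction of $(\cK;V)$ from $(\cF_0(X;\bH),\langle\cdot,\cdot\rangle_K)$ with $V(x)h=[\widehat h\,]$ for (a)$\Ra$(b), and the isometric-extension argument for uniqueness. No gaps.
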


Due to the uniqueness part in the previous theorem, for any $K\in\fK^+(X;\bH)$, we denote by $(\cK_K;V_K)$
the minimal Hilbert space linearisation of $K$, as constructed during the proof of the implication (a)$\Ra$(b).

Let $X$ be a nonempty set and 
$\bH=\{\cH_x\}_{x\in X}$ a bundle of Hilbert spaces over $\FF$.
Given an $\bH$-operator valued kernel $K$, 
a  \emph{reproducing kernel Hilbert space} associated to $K$ is, by definition, a Hilbert space 
$\cR\subseteq \cF(X;\bH)$ subject to the following conditions.
\begin{itemize}
\item[(rk1)] $\cR$ is a subspace of $\cF(X;\bH)$, with all induced algebraic operations.
\item[(rk2)] For all $x\in X$ and $h\in\cH_x$, the cross-section
$K_xh:=K(\cdot,x)h$ belongs to $\cR$. 
\item[(rk3)] For all $f\in\cR$ we have $\langle f(x),h\rangle_{\cH_x}
=\langle f,K_x h\rangle_\cR$, for all $x\in X$ and all $h\in\cH_x$.
\end{itemize}
Consequently, the following minimality condition holds as well:
\begin{itemize} 
\item[(rk4)] The span of $\{K_x h\mid x\in X,\ h\in\cH_x\}$ is dense in $\cR$.
\end{itemize}

Also, it is worth mentioning that by (rk2), for each $x\in X$, we actually have a bounded linear operator 
$K_x\colon \cH_x\ra\cR$ defined by $K_xh:=K(\cdot,x)h$, for all $h\in\cH_x$. This operator is bounded, as proven in 
\eqref{e:kix}. The following result is a generalisation of Moore-Aronszajn Theorem,
\cite{Moore}, \cite{Aronszajn},  \cite{Micchelli}, \cite{Carmelli}, \cite{Caponnetto},
\cite{MinhSindhwani}, \cite{Kadri}. Also, it is a special case of \cite{ConstantinescuGheondea1} and it is proven in the Appendix B. In Appendix C
we present a more direct construction of the reproducing kernel Hilbert space induced 
by an operator valued positive semidefinite kernel.

\begin{theorem}\label{t:rkh} Given an arbitrary bundle of Hilbert 
spaces $\bH=\{\cH_x\}_{x\in X}$ and an $\bH$-operator valued kernel $K$, the 
following assertions are equivalent.
\begin{itemize}
\item[\emph{(a)}] $K$ is positive semidefinite.
\item[\emph{(b)}] There exists a reproducing kernel Hilbert space $\cR$ having $K$
its reproducing kernel.
\end{itemize}
In addition, the reproducing kernel Hilbert space $\cR$ is uniquely determined by 
its reproducing kernel $K$.
\end{theorem}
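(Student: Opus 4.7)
The plan is to prove the easy direction (b)$\Ra$(a) directly from the reproducing property and to obtain (a)$\Ra$(b) by transporting the minimal Hilbert space linearisation supplied by Theorem~\ref{t:kolmogorov} into the space of cross-sections $\cF(X;\bH)$. For (b)$\Ra$(a), assuming $\cR$ is an RKHS with reproducing kernel $K$, I would take arbitrary $x_1,\ldots,x_n\in X$ and $h_i\in\cH_{x_i}$, set $f:=\sum_{i=1}^n K_{x_i}h_i\in\cR$, and expand $\|f\|_\cR^2$. Using (rk3) on each cross-term $\langle K_{x_i}h_i,K_{x_j}h_j\rangle_\cR$, which equals $\langle (K_{x_i}h_i)(x_j),h_j\rangle_{\cH_{x_j}}=\langle K(x_j,x_i)h_i,h_j\rangle_{\cH_{x_j}}$, one gets precisely the nonnegativity condition \eqref{e:psk}.

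For (a)$\Ra$(b), I invoke Theorem~\ref{t:kolmogorov} to obtain the minimal linearisation $(\cK_K;V_K)$, so $K(y,x)=V_K(y)^*V_K(x)$ for all $x,y\in X$. Define a linear map $W\colon \cK_K\ra\cF(X;\bH)$ by $(W\xi)(x):=V_K(x)^*\xi$ for $\xi\in\cK_K$ and $x\in X$. The minimality condition (kd4) forces $W$ to be injective: if $\xi\in\ker W$, then $\xi\perp V_K(x)h$ for every $x\in X$ and $h\in\cH_x$, hence $\xi\perp \cK_K$. Setting $\cR:=W(\cK_K)$ and transporting the inner product via $\langle W\xi,W\eta\rangle_\cR:=\langle\xi,\eta\rangle_{\cK_K}$, condition (rk1) is immediate, (rk2) follows from the identity $K_xh=W(V_K(x)h)$ (since $(W(V_K(x)h))(y)=V_K(y)^*V_K(x)h=K(y,x)h$), and (rk3) is the computation
\[
\langle W\xi,K_xh\rangle_\cR=\langle\xi,V_K(x)h\rangle_{\cK_K}=\langle V_K(x)^*\xi,h\rangle_{\cH_x}=\langle (W\xi)(x),h\rangle_{\cH_x}.
\]
The density condition (rk4) then follows from (kd4) together with the fact that $W$ is an isometric isomorphism onto $\cR$.

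For uniqueness, suppose $\cR'$ is any other RKHS in $\cF(X;\bH)$ with reproducing kernel $K$. The linear span $\cR_0:=\Lin{K_xh\mid x\in X,\ h\in\cH_x}$ sits in both, and the inner products induced by $\cR$ and $\cR'$ agree on $\cR_0$ because the reproducing property forces both to equal $\sum_{i,j}\langle K(x_j,x_i)h_i,h_j\rangle_{\cH_{x_j}}$ on finite sums. By (rk4), $\cR$ and $\cR'$ are two completions of the same pre-Hilbert space $\cR_0$, so it remains to identify them as equal subsets of $\cF(X;\bH)$. Given $f\in\cR$ and a sequence $f_n\in\cR_0$ with $f_n\to f$ in $\cR$, for every $y\in X$ and $h\in\cH_y$ the Cauchy-Schwarz inequality combined with $\|K_yh\|_\cR^2=\langle K(y,y)h,h\rangle_{\cH_y}$ yields $\langle f_n(y),h\rangle_{\cH_y}=\langle f_n,K_yh\rangle_\cR\to\langle f,K_yh\rangle_\cR=\langle f(y),h\rangle_{\cH_y}$. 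Since the inner products agree on $\cR_0$, the same sequence is Cauchy in $\cR'$, converging to some $f'\in\cR'$ which, by the same pointwise-evaluation argument, must equal $f$ as a cross-section. Hence $\cR=\cR'$ with matching inner products.

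The main obstacle is the passage from the abstract Hilbert space $\cK_K$ to a genuine space of cross-sections; this is where minimality of the linearisation is essential, since without (kd4) the map $W$ would fail to be injective and the transported inner product would not be well-defined. Once $W$ is known to be an isometric embedding, the reproducing property falls out cleanly from (kd3), and uniqueness is forced by the density clause (rk4) combined with pointwise evaluation.
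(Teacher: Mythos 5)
Your proof is correct and follows essentially the same route as the paper: the implication (a)$\Rightarrow$(b) transports the minimal linearisation from Theorem~\ref{t:kolmogorov} into $\cF(X;\bH)$ via the map $\xi\mapsto V_K(\cdot)^*\xi$, using minimality for injectivity, and (b)$\Rightarrow$(a) expands $\bigl\|\sum_i K_{x_i}h_i\bigr\|_\cR^2$ exactly as in Appendix B. Your explicit uniqueness argument (agreement of inner products on the dense span $\Lin{K_xh\mid x\in X,\ h\in\cH_x}$ plus continuity of pointwise evaluation) is a welcome addition, since the paper asserts uniqueness without writing out this step.
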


\begin{remark}\label{r:kolrk} {There is a 
natural bijective transformation between the unitary equivalency class of minimal
linearisations $(\cK;V)$ of $K$ and the reproducing kernel Hilbert
space $\cR(K)$. The transformation from a minimal linearisations 
$(\cK;V)$ to the reproducing kernel Hilbert space $\cR(K)$ is described during the
proof of the implication (a)$\Ra$(b) of Theorem~\ref{t:rkh}, see Appendix B. In the following we
describe the inverse of this transformation.}

{Let $(\cR;\langle\cdot,\cdot\rangle_\cR)$ be a 
reproducing kernel Hilbert space with reproducing kernel $K$. 
We define the operator bundle $V=\{V(x)\}_{x\in X}$ by
\begin{equation}\label{e:v} V(x)h=K_xh,\quad x\in X,\ h\in\cH_x,
\end{equation} and remark that $V(x)\colon\cH_x\ra\cR$ for all $x\in X$. 
By means of the reproducing property (rk3) of the kernel $K$, we have
\begin{equation*} \langle V(x)h,V(x)h\rangle_\cR=\langle K_xh,K_xh\rangle_\cR
=\langle K(x,x)h,h\rangle_{\cH_x}\leq \|K(x,x)\| \|h\|^2_{\cH_x},\quad x\in X,\ 
h\in\cH_x,\end{equation*} hence $V(x)\in\cB(\cH_x,\cR)$. Also, using once more
the reproducing property (rk3) of $K$, it follows that, for all $x,y\in X$, $h\in\cH_x$,
and $g\in\cH_y$, we have
\begin{equation*} \langle V(y)^*V(x)h,g\rangle_{\cH_y}=\langle V(x)h,
V(y)g\rangle_\cR=\langle K_xh,K_yg\rangle_\cR=\langle K(y,x)h,g\rangle_{\cH_y}.
\end{equation*}
Therefore, $K(y,x)=V(y)^*V(x)$ for all $x,y\in X$ and hence, $(\cR;V)$ is a 
linearistion of $K$. In addition, using the minimality property (rk3),
it is easy to see that the linearisation $(\cR;V)$ is minimal as well.}
\end{remark}

One of the most important property of a reproducing kernel Hilbert space consists in the fact that, as a 
function space, its topology makes continuous all evaluation operators, see the proof in the Appendix D.

\begin{theorem}\label{t:evop} With notation as before, let $\cH$ be a Hilbert space in the vector space $\cF(X;\bH)$.
The following assertions are equivalent.
\begin{itemize}
\item[\emph{(a)}] $\cH$ is a reproducing kernel space of $\bH$-valued maps on $X$.
\item[\emph{(b)}] For any $x\in X$ the linear operator $\cH\ni f\mapsto f(x)\in \cH_x$ is 
bounded.
\end{itemize}
\end{theorem}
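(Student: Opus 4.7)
The plan is to prove the two implications separately, with the forward direction resting on the Cauchy--Schwarz inequality applied to the reproducing property, and the reverse direction resting on the Riesz representation theorem applied to the continuous evaluation functionals.

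For (a)$\Ra$(b), I would fix $x\in X$ and $f\in\cH$, and use the reproducing property (rk3) together with the already noted bound $\|K_xh\|_\cR^2=\langle K(x,x)h,h\rangle_{\cH_x}\le \|K(x,x)\|\,\|h\|_{\cH_x}^2$ to write
\begin{equation*}
\abs{\langle f(x),h\rangle_{\cH_x}}=\abs{\langle f,K_xh\rangle_\cR}\le \|f\|_\cR\,\|K_xh\|_\cR\le \|K(x,x)\|^{1/2}\,\|f\|_\cR\,\|h\|_{\cH_x},\quad h\in\cH_x.
\end{equation*}
Taking the supremum over $h\in\cH_x$ with $\|h\|_{\cH_x}=1$ yields $\|f(x)\|_{\cH_x}\le \|K(x,x)\|^{1/2}\,\|f\|_\cR$, so the evaluation operator $E_x\colon\cH\ni f\mapsto f(x)\in\cH_x$ is bounded.

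For (b)$\Ra$(a), assume all evaluation operators $E_x\colon\cH\to\cH_x$ are bounded. For any $x\in X$ and $h\in\cH_x$, the linear functional $\cH\ni f\mapsto \langle E_xf,h\rangle_{\cH_x}=\langle f(x),h\rangle_{\cH_x}$ is bounded on $\cH$, so by the Riesz representation theorem there is a unique element, which I will denote $K_xh\in\cH$, with
\begin{equation*}
\langle f(x),h\rangle_{\cH_x}=\langle f,K_xh\rangle_\cH,\quad f\in\cH.
\end{equation*}
Uniqueness in the Riesz representation implies that $h\mapsto K_xh$ is conjugate-linear... actually linear, once one checks the conventions; concretely $K_x=E_x^*\in\cB(\cH_x,\cH)$. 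I then define the candidate kernel by $K(y,x):=E_yE_x^*\in\cB(\cH_x,\cH_y)$ for all $x,y\in X$, which automatically gives $K_xh=K(\cdot,x)h\in\cH$, so (rk1) and (rk2) hold by construction, and the identity above is precisely the reproducing property (rk3).

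The argument is essentially routine once the framework is set, and I do not anticipate any serious obstacle. The only place where one has to pay attention is in the reverse direction, when checking that the element $K_xh$ produced by Riesz really depends linearly on $h$ (so that $K_x\in\cB(\cH_x,\cH)$ and $K(y,x)\in\cB(\cH_x,\cH_y)$), and that the cross-section $y\mapsto (K_xh)(y)=E_yE_x^*h$ is the correct pointwise realisation of $K_xh$; both follow from the uniqueness clause of Riesz and from the definition of $E_y$.
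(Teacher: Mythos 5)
Your proposal is correct and follows essentially the same route as the paper: the forward direction uses the reproducing property together with the bound $\|K_xh\|^2_{\cH}=\langle K(x,x)h,h\rangle_{\cH_x}\le\|K(x,x)\|\,\|h\|^2_{\cH_x}$, and the reverse direction defines $K(y,x)=E_yE_x^*$ and verifies (rk1)--(rk3) directly. The only cosmetic difference is that the paper additionally invokes Theorem~\ref{t:kolmogorov} to note that $K$ is positive semidefinite and Theorem~\ref{t:rkh} to identify $\cH$ with $\cH_K$, whereas your direct verification of the axioms already suffices for assertion (a).
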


In connection to the previous theorem it is worth mentioning that, 
for a reproducing kernel Hilbert space $\cH_K\subseteq \cF(X;\bH)$ and
for arbitrary $x\in X$, the evaluation operator $\cH_K\ni f\mapsto f(x)\in \cH_x$ coincides with 
$K_x^*\colon \cH_K\ra \cH_x$, where $K_x\colon \cH_x\ra \cH_K$ is the bounded 
operator, see the axiom (rk2), defined by
$K_xh:=K(\cdot,x)h$, for all $h\in \cH_x$.

\subsection{Localisation of Semisupervised, Regularised, and Multiview Learning.}\label{ss:grmlp}

Let $X$ be a nonempty set and $\textbf{W}=\{\cW_x\}_{x\in X}$ be a bundle of Hilbert 
spaces on $X$. In this section,   
it is not important whether the Hilbert spaces are complex or real, hence all Hilbert 
spaces are considered to be 
over the field $\FF$, that is either $\CC$ or $\RR$. There is a difference between the 
complex and the real case 
consisting in the fact that in the latter case, for positive semidefiniteness we assume 
also the symmetry, or Hermitian, 
property, while in the complex case, the symmetry property is a consequence of the positive semidefiniteness.
 If $K$ is a 
positive semidefinite $\textbf{W}$-operator valued kernel, we let $\cH_K$ be its reproducing kernel Hilbert space, as 
in the previous subsection. Also, let $\bY=\{\cY_x\}_{x\in X}$ be a bundle of Hilbert spaces.

For $l,u\in \NN$, consider input distinct points $x_1,\ldots,x_{l+u}\in X$. Here $x_1,\ldots,x_l$ are the labeled 
input points while $x_{l+1},\ldots,x_{l+u}$ are the unlabeled input points. More precisely, there are 
given $y_1,\ldots,y_l$ output points, such that $y_j\in \cY_{x_j}$ for all $j=1,\ldots,l$. 
Then, for the general data let
\begin{equation*} \bx:=(x_j)_{j=1}^{l+u},\quad \by:=(y_j)_{j=1}^l,\quad
\bz:= \bigl((x_j)_{j=l+1}^{l+u},(y_j)_{j=1}^l\bigr).
\end{equation*}
The input points $x_1,\ldots,x_{l+u}$ are randomly 
selected with respect to an unknown probability and then, depending on the concrete 
problem, the labels $y_1,\ldots,y_l$ are produced in a certain way.

Let $\bW^{l+u}$ denote the Hilbert space
\begin{equation}\label{e:welu}
\bW^{l+u}=\bigoplus_{j=1}^{l+u} \cW_{x_j}.
\end{equation}
For $f\in \cH_K$ let
\begin{equation}\label{e:bef}
\bef:=(f(x_1),\ldots,f(x_{l+u}))\in \bW^{l+u}. 
\end{equation}

Also, there is given a (Hermitian, if $\FF=\RR$) positive semidefinite operator $M\in\cB(\bW^{l+u})$
represented as an operator block $(l+u)\times (l+u)$-matrix $M=[M_{j,k}]$, with $M_{j,k}\in \cB(\cW_{x_k},\cW_{x_j})$ 
for all $j,k=1,\ldots,l+u$.
Let $\bV=\{V_x\}_{x\in X}$ be a bundle of maps, loss functions, 
where $V_x\colon \cY_x\times \cY_x\ra \RR$ is a function, 
for all $x\in X$. Also, $\bC=\{C_x\}_{x\in X}$ is a bundle of bounded linear operators, where 
$C_x\colon \cW_x\ra \cY_x$ for all $x\in X$. The general minimisation problem is
\begin{equation}\label{e:fezga}
f_{\bz,\gamma}=\argmin\limits_{f\in \cH_K} \frac{1}{l}  \sum_{j=1}^l V_{x_j} (y_j,C_{x_j}f(x_j)) 
+ \gamma_A \|f\|_{\cH_K}^2+ \gamma_I \langle \bef,M\bef\rangle_{\bW^{l+u}},
\end{equation}
where $\gamma=(\gamma_A,\gamma_I)$ and $\gamma_A>0$ and $\gamma_I\geq 0$ 
are the regularisation parameters.

The optimisation problem \eqref{e:fezga} is a localised version of the general vector 
valued reproducing kernel Hilbert space for semisupervised, 
regularised, manifold regularised and multiview learning as in \cite{MBM}.
It is also useful to introduce the map to be minimised
\begin{align}\nonumber
\cI(f) & := \frac{1}{l}  \sum_{j=1}^l V_{x_j} (y_j,C_{x_j}f(x_j)) 
+ \gamma_A \|f\|_{\cH_K}^2+ \gamma_I \langle \bef,M\bef\rangle_{\bW^{l+u}}\\
\intertext{and, since $f(x)=K_x^*f$ for all $f\in \cH_K$ and all $x\in X$, it equals}
& = \frac{1}{l}  \sum_{j=1}^l V_{x_j} (y_j,C_{x_j}K_{x_j}^*f) 
+ \gamma_A \|f\|_{\cH_K}^2+ \gamma_I \langle \bef,M\bef\rangle_{\bW^{l+u}},\label{e:ielef}
\end{align}

In the following we explain the terms in the minimising map \eqref{e:ielef} and their 
significance from the point of view of machine learning. Firstly, why labeled and 
unlabeled data? Traditionally, in machine learning there are three fundamental 
approaches: supervised learning, unsupervised learning, and reinforcement learning, 
but the last one is out of our concern. We firstly recall the meaning and limitations of the 
first two approaches. \emph{Supervised learning} means that the training of the machine 
learning model is using exclusively 
labeled dataset. The input points and the 
labels are selected according to a probability that is usually unknown or the input points 
are selected according to an unknown probability and then the labels are produced in a 
certain fashion. Basically, this means that a label is a description showing a 
model, what it is expected to predict. But supervised learning has some limitations since 
this process is: \emph{slow}, because it requires human experts to either manually label 
training examples one by one or carefully supervise the procedure, and \emph{costly}, 
because, in order to obtain reliable results, a model should be trained on 
the large volumes of labeled data to provide accurate predictions.
\emph{Unsupervised learning} is that approach when a model tries to find hidden 
patterns, differences, and similarities in unlabeled data by itself, without human 
supervision. Most of the time, in this approach, data points are grouped into clusters 
based on similarities.
But, while unsupervised learning is a cheaper way to perform training tasks, it has other 
limitations: it has a \emph{limited area of applications}, mostly for clustering purposes,
and provides \emph{less accurate results}. 

\emph{Semisupervised learning} 
combines supervised learning and unsupervised learning 
techniques to solve some important issues: we train an initial model on a few labeled 
samples and then iteratively apply it to a greater number of unlabeled data.
Unlike unsupervised learning, semisupervised learning works for a larger variety of 
problems: classification, regression, clustering, and association.
Unlike supervised learning, the method uses small amounts of labeled data but large 
amounts of unlabeled data, with the advantage that it reduces the costs on human 
work and the data preparation time, while the accuracy of results is not altered. Of 
course, some other issues show up: the unlabeled points should show certain 
consistency and for this some regularisation techniques are needed.
A comprehensive discussion on this subject can be found in the collection 
\cite{Chapelle}.

Secondly, the reproducing kernel Hilbert space $\cH_K$ is associated to a 
vector valued positive semidefinite kernel for several reasons, but mainly
because this is related to the multiview 
learning, cf.\ \cite{Evgeniou}, \cite{Micchelli}, 
\cite{SindhwaniRosenberg}, \cite{Rosenberg}, \cite{MinhSindhwani},  \cite{Luo}, 
\cite{Kadri}, \cite{MBM}, \cite{Hashimoto21}, \cite{Hashimoto22}, \cite{Hashimoto23a}, \cite{Hashimoto23b}. In 
this article  we consider localised versions of these operator valued reproducing kernel Hilbert 
spaces that offers flexibility for a larger class of learning problems, as explained in the Introduction, 
and does not bring additional obstructions, as proven in Subsection \ref{ss:ovk}.

Further on, the first term in \eqref{e:ielef} controls the distance, estimated by local loss 
(or cost) functions at the labeled input points with respect to the 
labels. More precisely, for each label point $x_j$, $j=1,\ldots,l$ the label 
$y_j\in \cY_{x_j}$ is compared, through the cost function $V_{x_j}$, 
with $f(x_j)\in\cW_{x_j}$ by a 
combination operator $C_{x_j}\colon \cW_{x_j}\ra \cY_{x_j}$, because the Hilbert 
spaces $\cY_{x_j}$ may be different from $\cW_{x_j}$. 

\begin{example}\label{ex:comb} 
{Following \cite{MBM}, for an input point $x\in X$ consider the label 
Hilbert space $\cY$ and let $\cW=\cY^m$, the orthogonal direct sum of $m$ copies of 
$\cY$. With this notation, the kernel $K$ has values in $\cB(\cW)$. A multiview $f(x)$ is 
then an $m$-tuple $(f^1(x),\ldots,f^m(x))^T$, with each $f^i(x)\in\cY$ and let the 
combination operator $C=[C_1,\ldots,C_m]\colon \cW=\cY^m\ra \cY$, that is,
$Cf(x)=C_1f^1(x)+\cdots+C_mf^m(x)\in\cY$.
}
\end{example}

In this article, the loss functions are also localised and one strong reason for this is that, 
depending on different purposes that this semisupervised learning is used for, there is a 
very large pool of choices for loss functions.

\begin{example}\label{ex:lf}  {We list in the following a few loss functions of interest 
in machine learning, see \cite{ZMY} and \cite{Kwon} for a more comprehensive list and 
applications.}\medskip

 {(1)  \emph{Least Squares.} The least squares loss function is
\begin{equation*}
V(y,z)=(y-z)^2,\quad y,z\in \RR.
\end{equation*}
It is convex, nonnegative, and differentiable.}

 {(2)  \emph{Sigmoid.} The sigmoid loss function is
\begin{equation*}
V(y,z)=\frac{1}{1+\exp(z-y)},\quad y,z\in \RR.
\end{equation*}
It is nonnegative, differentiable, and nonconvex.}

 {(3)  \emph{Hinge.} The hinge loss function is
\begin{equation*}
V(y,z)=\max\{0,1-yz\},\quad y,z\in\RR.
\end{equation*}
It is nonnegative, continuous, convex, but not differentiable.}

 {(4)  \emph{Exponential Least Squares.} The exponential least squares function is
\begin{equation*}
V(y,z)=1-\exp(-(y-z)^2),\quad y,z\in \RR.
\end{equation*}
It is nonnegative, upper bounded by $1$, differentiable, and nonconvex.}

 {(5)  \emph{Leaky Hockey Stick.} The leaky hockey stick loss function is
\begin{equation*}
V(y,z)=\begin{cases} -\log(zy),& yz>1,\\ 1-yz,& yz\leq 1.\end{cases}
\end{equation*}
It is upper and lower unbounded, convex, and differentiable.}
\end{example}

The second term in \eqref{e:ielef} is the usual regularisation penalty term, following 
the Tikhonov regularisation method, cf.\ \cite{Tikhonov}. This is used in order to avoid 
large target
functions $f$ and overfitting, that is, optimising functions that match very accurately the 
labeled data but perform badly for other data. Because of this, 
the regularisation parameter $\gamma_A$ is always positive. In the literature, 
sometimes the second term is replaced 
by $\phi(\|f\|_{\cH_K})$, where $\phi\colon \RR_+\ra \RR_+$ is an increasing function so, 
in our case $\phi(t)=\gamma_A t^2$. 

The third term in \eqref{e:ielef} combines vector valued manifold regularisation, 
cf.\ \cite{MinhSindhwani}, with multiview regularisation, cf.\ \cite{Rosenberg} and 
\cite{Sun}. The parameter 
$\gamma_I$ may be taken $1$, without loss of generality, since it can be absorbed in 
$M$.
Following \cite{MBM}, the operator multiview regularisation term 
$\gamma_I \langle \bef,M\bef\rangle_{\bW^{l+u}}$ is decomposed as
\begin{equation} \gamma_I \langle \bef,M\bef\rangle_{\bW^{l+u}}=
\gamma_B \langle \bef,M_B\bef\rangle_{\bW^{l+u}}+\gamma_W \langle 
\bef,M_W\bef\rangle_{\bW^{l+u}},\label{e:gai}
\end{equation}
where $M_B,M_W\in\cB(\bW^{l+u})$ are selfadjoint positive operators and 
$\gamma_B,\gamma_W\geq 0$. As before, the regularising parameters $\gamma_B$ 
and $\gamma_W$ may be taken $1$, without loss of generality, because they can be 
absorbed in $M_B$ and $M_W$, respectively.
The first term in \eqref{e:gai} is the \emph{localised 
between-view regularisation} while the latter term in \eqref{e:gai} is the 
\emph{localised within-view regularisation}. In the next example we show by some 
concrete situations the constructions of the operators $M_B$ and $M_W$ and their 
significance.

\begin{example}\label{ex:bevreg} {This example follows closely the example of
between-view regularisation as in 
\cite{MBM}. With notation as in Example~\ref{ex:comb}, let 
$M_m=mI_m-\mathbf{1}_m\mathbf{1}_m^T$, where $\mathbf{1}_m=(1,1,\ldots,1)^T$. 
More precisely, $M_m$ is the $m\times m$ matrix with all entries equal to $m-1$ throughout its diagonal 
and $-1$ elsewhere. Then, for each $\mathbf{a}=(a_1,\ldots,a_m)^T\in \RR^m$, 
we have
\begin{equation*}
\mathbf{a}^T M_m\mathbf{a}=\sum_{j,k=1,\ j<k}^m (a_j-a_k)^2.
\end{equation*}
Then, for each $\mathbf{y}=(y_1,\ldots,y_m)^T\in\cY^m=\cW$ we have
\begin{equation*}
\mathbf{y}^T (M_m\otimes I_\cY)\mathbf{y}=\sum_{j,k=1,\ j<k} \|y_j-y_k\|^2_\cY.
\end{equation*}
So, letting $M_B=I_{u+l}\otimes (M_m\otimes I_\cY)$, for each $\mathbf{f}=(f(x_1),\ldots,f(x_{u+l})\in \cY^{m(u+l)}=\cW^{u+l}$, with $f(x_i)\in\cY^m=\cW$, we have
\begin{align*} \langle \mathbf{f},M_B\mathbf{f}\rangle_{\cW^{u_l}} & = 
\sum_{i=1}^{u+l} \langle f(x_i),(M_m\otimes I_\cY)f(x_i)\rangle_{\cW} =
\sum_{i=1}^{u+l} \sum_{j,k=1,\ j<k} \|f^j(x_i)-f^k(x_i)\|_\cY^2.
\end{align*}
This term is a control on the consistency  between different components $f^i$'s
which represent the outputs on different views.}
\end{example}

\begin{example}\label{ex:wivreg}
 {This example follows essentially \cite{MBM} for a within-view manifold
regularisation via multiview graph Laplacians in support vector machine learning, 
cf.\ \cite{Sun}. For manifold regularisation, a data adjaceny graph is 
defined in such a way that the entries measure the similarity or closeness of pairs of 
inputs. Given an undirected graph $G=(\cV,E)$, where the vertices are 
$\cV=\{1,\ldots,n\}$ 
and edges are simply pairs $(j,k)$, assume that for each edge $(j,k)\in E$ there is a 
weight $w_{j,k}$, and to each edge $(j,k)\not\in E$ we let $w_{j,k}=0$, 
in such a way that the weight matrix $W=[w_{j,k}]$ is Hermitian 
and nonnegative (positive semidefinite). }

 {For example, when each vertex $j$ is associated to a vector 
$h_j\in \RR^d$, we can use the Gaussian weights
\begin{equation}\label{e:wejek}
w_{j,k}=\exp(-\|h_j-h_k\|^2/2\sigma^2).
\end{equation}
In order to simplify the complexity of calculations,
cf.\ \cite{Sun}, for most of the edges $(j,k)$ we take $w_{j,k}=0$ and only 
for neighbouring $(j,k)$, that is, $\|h_j-h_k\|_2\leq \epsilon$, for some $\epsilon$, 
we define the weights by \eqref{e:wejek}.}

 {Further on, letting $v_{j,j}=\sum_{k=1}^n w_{j,k}$ and 
$v_{j,k}=0$ if $j\neq k$, we make the diagonal matrix $V=[v_{j,k}]$. We work under the 
assumptions that $v_{j,j}>0$ for all $j=1,\ldots,n$. Then the 
\emph{graph Laplacian} matrix is $L:=V-W$, which is positive semidefinite. Sometimes 
it is useful to work with the \emph{normalised graph Laplacian} 
$\widetilde L:=V^{-1/2}LV^{-1/2}$.
}

 {But, because the learning is from multiviews, this should be performed for 
each view and then aggregated in a consistent fashion. From now on we 
use the same notations and 
settings as in Example~\ref{ex:comb} and Example~\ref{ex:bevreg}.
Assume that, to each view $i$, $1\leq i\leq m$, we consider the undirected graph 
$G^i=(\cV^i,E^i)$ where $\cV^i=\{1,\ldots, u+l\}$, let $W^i=[w_{j,k}^i]$ be the 
corresponding weight matrix that is Hermitian and nonnegative, and let $L^i=[l_{j,k}^i]$ 
be the 
corresponding graph Laplacian. Then, for each vector $\mathbf{a}\in\RR^{u+l}$ we have
\begin{equation*}
\mathbf{a}^T L^i \mathbf{a}=\sum_{j,k=1,\ j<k}^{l+u} w_{j,k}^i (a_j-a_k)^2.
\end{equation*}
Now we aggregate the graph Laplacians into the multiview graph Laplacian
as a block matrix $L=[L_{j,k}]$, where for each $j,k=1,\ldots,u+l$ we define
\begin{equation*}
L_{j,k}=\diag(l^1_{j,k},\ldots,l^m_{j,k}).
\end{equation*}
This implies that for each vector $\mathbf{a}=(a_1,\ldots,a_{u+l})$, 
with $a_j=(a_j^1,\ldots,a_j^m)\in\RR^m$ for each $j=1,\ldots,u+l$, we have
\begin{equation*}
\mathbf{a}^TL\mathbf{a}=\sum_{i=1}^m \sum_{j,k=1,\ j<k}^{l+u} w_{j,k}^i (a_j^i-a_k^i)^2.
\end{equation*}
Finally, letting $M_W:=L\otimes I_\cY$, we have
\begin{equation*}
\langle \mathbf{f},M_W\mathbf{f}\rangle_{\cW^{u+l}}=\sum_{i=1}^m \sum_{j,k=1,\ j<k}^{l+u}
w_{j,k}^i \|f^i(x_j)-f^i(x_k)\|^2_\cY,
\end{equation*}
for all $\mathbf{f}=\{f^i(x_j)\mid i=1,\ldots,m,\ j=1,\ldots,u+l\}\in\cW^{u+l}=\cY^{m(u+l)}$.
Each term in the leftmost sum is a manifold regularisation for the view $i$ and 
hence the double sum is the aggregated manifold regularisation for all views. In this 
fashion, consistency is enforced for each view.
}
\end{example}

\subsection{A Representer Theorem.}\label{ss:agrt}
We continue to use the notation as in the previous subsection.
Generally speaking, a representer theorem has the goal to prove that 
the optimal solution to the problem \eqref{e:fezga} should belong to the space
\begin{equation}\label{e:hekab}
    \mathcal{H}_{K,\bx} = \Bigl\{ \sum^{l+u}_{i=1} K_{x_i}w_i \mid w_i \in \mathcal{W}_{x_i} \Bigr\}.
\end{equation}
Let $\ol\cH_{K,\bx}$ denote its closure in $\cH_{K}$ and let $P_{\ol\cH_{K,\bx}}$ denote the orthogonal projection
of $\cH_K$ onto $\ol\cH_{K,\bx}$. 
Let the sampling operator $S_{\mathbf{x}}: \mathcal{H}_K \to \bW^{l+u}$ be defined by
\begin{equation}\label{e:so}
    S_{\mathbf{x}}f = (K_{x_i}^* f)_{i=1}^{l+u}=(f(x_i))_{i=1}^{l+u} = \mathbf{f},\quad f\in \cH_K,
\end{equation}    
where $\bx=(x_1,\ldots,x_{l+u})$ and we have taken into account that $f(x) = K^*_{x}f$ for all $f\in \cH_K$ and all $x\in X$. 
 Let also $E_{\bC,{\mathbf{x}}}:\mathcal{H}_K \to \bY^l$, where 
 \begin{equation}\label{e:beyel}\bY^l:=\bigoplus_{j=1}^l \cY_{x_j},\end{equation} be defined by
\begin{equation}\label{e:ecf}
E_{\bC,\bx}f = \left( C_{x_1}K_{x_1}^*f, \dots,C_{x_l}K_{x_l}^*f\right)=
\left( C_{x_1}f(x_1), \dots,C_{x_l}f(x_l)\right),\quad f\in \cH_K.
\end{equation}

The main technical fact used in this section is a lemma whose proof is inspired 
by the proof of Theorem~2 in \cite{MBM}. 

\begin{lemma}\label{l:projection} With notation and assumptions an in 
Subsection~\ref{ss:grmlp} and as before, 
for any $f\in\cH_K$ the following inequality holds.
\begin{equation*}
\cI(P_{\ol\cH_{K,\bx}}f)\leq \cI(f).
\end{equation*}
\end{lemma}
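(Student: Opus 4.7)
The plan is to decompose $f$ orthogonally with respect to $\ol\cH_{K,\bx}$ and to check how each of the three terms in $\cI$ transforms under this decomposition. Write $f = f_\| + f_\perp$, where $f_\| := P_{\ol\cH_{K,\bx}} f \in \ol\cH_{K,\bx}$ and $f_\perp := f - f_\| \in \ol\cH_{K,\bx}^\perp$. The crucial step is to identify the orthogonal complement $\ol\cH_{K,\bx}^\perp$ in terms of evaluation at the input points.

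By \eqref{e:hekab}, an element $g \in \cH_K$ is orthogonal to $\ol\cH_{K,\bx}$ if and only if $\langle g, K_{x_i} w\rangle_{\cH_K} = 0$ for every $i = 1,\ldots, l+u$ and every $w \in \cW_{x_i}$. Invoking the reproducing property (rk3), this is equivalent to
\begin{equation*}
\langle g(x_i), w\rangle_{\cW_{x_i}} = 0 \quad \text{for all } w \in \cW_{x_i},
\end{equation*}
i.e.\ to $K_{x_i}^* g = g(x_i) = 0$ for all $i = 1,\ldots,l+u$. Applying this to $f_\perp$, I obtain $K_{x_i}^* f_\perp = 0$ for every $i$, whence
\begin{equation*}
K_{x_j}^* f = K_{x_j}^* f_\|, \quad j = 1,\ldots,l+u.
\end{equation*}

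With this in hand, I examine the three terms of $\cI(f)$ as given in \eqref{e:ielef}. First, for $j = 1,\ldots, l$, the argument $C_{x_j} K_{x_j}^* f$ equals $C_{x_j} K_{x_j}^* f_\|$, so the loss term is unchanged. Second, the sampling vector $\bef = S_\bx f = (K_{x_j}^* f)_{j=1}^{l+u}$ equals $\boldsymbol{f_\|} = S_\bx f_\|$, so the manifold/multiview term $\gamma_I \langle \bef, M\bef\rangle_{\bW^{l+u}}$ is unchanged as well. Third, by the Pythagorean identity in $\cH_K$,
\begin{equation*}
\|f\|^2_{\cH_K} = \|f_\|\|^2_{\cH_K} + \|f_\perp\|^2_{\cH_K} \geq \|f_\|\|^2_{\cH_K},
\end{equation*}
and since $\gamma_A > 0$, the Tikhonov term can only decrease (or stay the same) when replacing $f$ by $f_\|$. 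Combining the three observations yields $\cI(f_\|) \leq \cI(f)$, as required.

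There is no serious obstacle here: the entire argument hinges on the reproducing identity characterising $\ol\cH_{K,\bx}^\perp$ as the kernel of the sampling operator $S_\bx$, and on the fact that both the loss term and the manifold/multiview term depend on $f$ only through $S_\bx f$. The mild point worth a moment of care is that I am using $\ol\cH_{K,\bx}$ rather than $\cH_{K,\bx}$, which is what allows $P_{\ol\cH_{K,\bx}}$ to be the genuine orthogonal projection onto a closed subspace; since orthogonal complements of a set and of its closure coincide, the characterisation $f_\perp(x_i) = 0$ for $i = 1,\ldots,l+u$ is valid without any closedness assumption on $\cH_{K,\bx}$.
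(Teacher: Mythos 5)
Your proof is correct and follows essentially the same route as the paper's: both decompose $f$ orthogonally with respect to $\ol\cH_{K,\bx}$, use the reproducing property to show that the orthogonal component vanishes at all input points (so the loss and manifold terms depend only on the projected part), and apply the Pythagorean identity to the regularisation term. Your explicit remark that $\cH_{K,\bx}^\perp = \ol\cH_{K,\bx}^{\,\perp}$ is a nice touch of care that the paper leaves implicit.
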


\begin{proof}  We have the decomposition
\begin{equation}\label{e:decor}
\cH_K=\ol\cH_{K,\bx}\oplus \cH_{K,\bx}^\perp.
\end{equation}
Let $f\in \mathcal{H}_{K,\bx}^\bot$ be fixed. Then, for any $\mathbf{b}\in \bY^l$,
since $C_{x_i}^*b_i \in \mathcal{W}_{x_i}$, for all $i=1,\ldots,l+u$, and hence
\begin{equation*}
\sum_{i=1}^l K_{x_i}C_{x_i}^* b_i\in \cH_{K,\bx},
\end{equation*} 
we have
\begin{equation*}
    \innerprod{E_{\bC,\mathbf{x}}f}{\mathbf{b}}_{\bY^l} = \innerprod{f}{E_{C,\mathbf{x}}^*\mathbf{b}}_{\mathcal{H}_K} = \sum^{l}_{i=1} \innerprod{f}{K_{x_i}C_{x_i}^*b_i}_{\mathcal{H}_K} =
    \langle f,\sum_{i=1}^l K_{x_i}C_{x_i}^* b_i\rangle_{\cH_K}= 0.
\end{equation*}
 Consequently, 
\begin{equation}\label{e:bot}
E_{\bC,\mathbf{x}}f = (C_{x_1}K_{x_1}^*f, \dots, C_{x_l}K_{x_l}^*f) = 0.
\end{equation}

Similarly, by the reproducing property, letting $\bw=(w_1,\ldots,w_{l+u})$ be an arbitrary vector in $\bW^{l+u}$, we have
\begin{equation*}
    \innerprod{S_\mathbf{x}f}{\mathbf{w}}_{\bW^{l+u}} = \sum^{l+u}_{i=1}\innerprod{f(x_i)}{\mathbf{w}}_{\bW^{l+u}} =
  \sum^{l+u}_{i=1} \langle f, K_{x_i}w_i\rangle_{\cH_K}  
    = \innerprod{f}{\sum^{l+u}_{i=1}K_{x_i}w_i}_{\mathcal{H}_K} = 0,
\end{equation*}
hence 
\begin{equation}\label{e:botex}
\mathbf{f}= S_\mathbf{x}f = (f(x_1),\dots,f(x_{l+u})) = 0.
\end{equation}

For arbitrary $f\in\mathcal{H}_K$, in view of the decomposition \eqref{e:decor}, we 
have the unique decomposition
$f = f_0+f_1$ with $f_0\in \ol\cH_{K,\mathbf{x}}$ and $f_1\in \mathcal{H}_{K,\mathbf{x}}^\bot$, that is,
$f_0=P_{\ol\cH_{K,\bx}}f$.
Then, 
\begin{equation*}\norm{f_0+f_1}^2_{\mathcal{H}_K} = \norm{f_0}_{\mathcal{H}_K}^2 
+ \norm{f_1}_{\mathcal{H}_K}^2,\end{equation*}  
and consequently,
\begin{align}
    \mathcal{I}(f) &= \mathcal{I}(f_0+f_1) = \dfrac{1}{l} \sum_{i = 1}^l V_{x_i}(y_i, C_{x_i}K^*_{x_i}f_0 + C_{x_i}K^*_{x_i}f_1)+ \gamma_A \norm{f_0}^2_{\mathcal{H_K}} + \gamma_A \norm{f_1}^2_{\mathcal{H_K}}\nonumber \\
 &\ \ \ + \gamma_I \innerprod{S_\mathbf{x}f_0}{MS_\mathbf{x}f_0}_{\bW^{l+u}} + \gamma_I \innerprod{S_\mathbf{x}f_0}{MS_\mathbf{x}f_1}_{\bW^{l+u}} \nonumber\\
 &\ \ \ + \gamma_I \innerprod{S_\mathbf{x}f_1}{MS_\mathbf{x}f_0}_{\bW^{l+u}} +\gamma_I \innerprod{S_\mathbf{x}f_1}{MS_\mathbf{x}f_1}_{\bW^{l+u}}.\label{e:iafaz}
\end{align}
By \eqref{e:bot} we then see that
\begin{equation*}\label{e:bound}
    V_{x_i}(y_i, C_{x_i}K^*_{x_i}f_0 + C_{x_i}K^*_{x_i}f_1) = V_{x_i}(y_i, C_{x_i}K^*_{x_i}f_0),
\end{equation*}
and by \eqref{e:botex}  we see that
\begin{equation*}
    \innerprod{S_\mathbf{x}f_0}{MS_\mathbf{x}f_1}_{\bW^{l+u}} =  \innerprod{S_\mathbf{x}f_0}{0}_{\bW^{l+u}} = 0.
\end{equation*}
So,  
\begin{equation}\label{e:manbound}
    \innerprod{S_\mathbf{x}f_0}{MS_\mathbf{x}f_1}_{\bW^{l+u}} = \innerprod{S_\mathbf{x}f_1}{MS_\mathbf{x}f_0}_{\bW^{l+u}} = \innerprod{S_\mathbf{x}f_1}{MS_\mathbf{x}f_1}_{\bW^{l+u}} = 0
\end{equation}
and hence, by \eqref{e:iafaz}, we have
\begin{align}
    \mathcal{I}(f) &= \mathcal{I}(f_0+f_1) = \dfrac{1}{l} \sum_{i = 1}^l V_{x_i}(y_i, C_{x_i}K^*_{x_i}f_0 + C_{x_i}K^*_{x_i}f_1)+ \gamma_A \norm{f_0}^2_{\mathcal{H_K}} + \gamma_A \norm{f_1}^2_{\mathcal{H_K}} \nonumber \\
    &\ \ \ + \gamma_I \innerprod{S_\mathbf{x}f_0}{MS_\mathbf{x}f_0}_{\bW^{l+u}} + \gamma_I \innerprod{S_\mathbf{x}f_0}{MS_\mathbf{x}f_1}_{\bW^{l+u}} + \nonumber \\
    &\ \ \ + \gamma_I \innerprod{S_\mathbf{x}f_1}{MS_\mathbf{x}f_0}_{\bW^{l+u}} +\gamma_I \innerprod{S_\mathbf{x}f_1}{MS_\mathbf{x}f_1}_{\bW^{l+u}} \nonumber \\
\intertext{and then, by \eqref{e:bot} and \eqref{e:manbound}
we get that}
\mathcal{I}(f) & =    \dfrac{1}{l} \sum_{i = 1}^l V_{x_i}(y_i, C_{x_i}K^*_{x_i}f_0) + \gamma_A\norm{f_0}^2_{\mathcal{H_K}} + \gamma_A \norm{f_1}^2_{\mathcal{H_K}}+ \gamma_I\innerprod{S_\mathbf{x}f_0}{MS_\mathbf{x}f_0}_{\bW^{l+u}}
\nonumber \\
&\geq 
    \dfrac{1}{l} \sum_{i = 1}^l V_{x_i}(y_i, C_{x_i}K^*_{x_i}f_0) + \gamma_A\norm{f_0}^2_{\mathcal{H_K}} + \gamma_I\innerprod{S_\mathbf{x}f_0}{MS_\mathbf{x}f_0}_{\bW^{l+u}}=\cI(f_0),\label{e:desiv}
\end{align}
and the proof is finished.
\end{proof}

In order to get a conclusion in the spirit of the representer theorem, extra assumptions are needed.

\begin{proposition}\label{p:minimizer} Assume that the subspace $\cH_{K,\bx}$, see \eqref{e:hekab}, is closed.
This happens, for example, if 
all Hilbert spaces $\cW_{x_1},\ldots,\cW_{x_{l+u}}$ have finite dimensions. If the minimisation problem 
\eqref{e:fezga} has a solution $f_{\bz,\gamma}$ then there exist 
$a_1,\ldots,a_{l+u}$, with $a_j\in \cW_{x_j}$ for all $j=1,\ldots,l+u$, such that
\begin{equation*}
f_{\bz,\gamma}=\sum_{j=1}^{l+u} K_{x_j} a_j.
\end{equation*}
\end{proposition}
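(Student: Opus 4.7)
The plan is to leverage Lemma~\ref{l:projection} together with the minimality of $f_{\bz,\gamma}$, and to extract a strict-inequality refinement from the proof of that lemma in order to pin the minimiser down to lie inside $\ol\cH_{K,\bx}$; the closedness assumption on $\cH_{K,\bx}$ will then deliver the desired representation.

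Assuming $f_{\bz,\gamma}$ is a minimiser, applying Lemma~\ref{l:projection} yields $\cI(P_{\ol\cH_{K,\bx}} f_{\bz,\gamma}) \leq \cI(f_{\bz,\gamma})$, while minimality of $f_{\bz,\gamma}$ gives the reverse inequality, so we obtain the equality $\cI(P_{\ol\cH_{K,\bx}} f_{\bz,\gamma}) = \cI(f_{\bz,\gamma})$. I would then revisit the crucial inequality \eqref{e:desiv} in the proof of Lemma~\ref{l:projection}, applied to the orthogonal decomposition $f_{\bz,\gamma} = f_0 + f_1$ with $f_0 = P_{\ol\cH_{K,\bx}} f_{\bz,\gamma}$ and $f_1 \in \cH_{K,\bx}^\perp$. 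The only term dropped when passing from $\cI(f_{\bz,\gamma})$ to $\cI(f_0)$ in that step is $\gamma_A \|f_1\|_{\cH_K}^2$, and since by hypothesis $\gamma_A > 0$, that inequality is in fact strict whenever $f_1 \neq 0$. The equality of $\cI$-values therefore forces $f_1 = 0$, i.e., $f_{\bz,\gamma} \in \ol\cH_{K,\bx}$.

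At this stage the closedness assumption gives $\ol\cH_{K,\bx} = \cH_{K,\bx}$, and the definition \eqref{e:hekab} directly provides coefficients $a_1,\ldots,a_{l+u}$ with $a_j \in \cW_{x_j}$ and $f_{\bz,\gamma} = \sum_{j=1}^{l+u} K_{x_j} a_j$. For the auxiliary claim on closedness, I would observe that $\cH_{K,\bx}$ is exactly the image of the bounded linear operator $\bW^{l+u} \ni (w_j)_{j=1}^{l+u} \mapsto \sum_j K_{x_j} w_j \in \cH_K$; when each $\cW_{x_j}$ is finite-dimensional, so is $\bW^{l+u}$, so is its linear image, and a finite-dimensional subspace of a Hilbert space is automatically closed.

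The only genuinely nontrivial point is the strict-inequality refinement: once one recognises that the strict positivity of $\gamma_A$ upgrades the weak inequality provided by Lemma~\ref{l:projection} to a strict one whenever the orthogonal component $f_1$ is nonzero, the rest of the argument is bookkeeping. Note in particular that no use is made of convexity, continuity, or differentiability of the loss functions $V_{x_j}$, nor of any structural property of the regularising operator $M$ beyond positive semidefiniteness, which is consistent with the paper's emphasis on admitting nonconvex losses and infinite-dimensional settings.
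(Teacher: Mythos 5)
Your proposal is correct and follows the same route as the paper: the paper's own (very terse) proof simply invokes Lemma~\ref{l:projection} together with the closedness of $\cH_{K,\bx}$ to conclude $f_{\bz,\gamma}\in\cH_{K,\bx}$. You in fact supply the one detail the paper leaves implicit — that the inequality \eqref{e:desiv} is strict whenever the orthogonal component $f_1$ is nonzero because the only discarded term is $\gamma_A\|f_1\|_{\cH_K}^2$ with $\gamma_A>0$ — so your write-up is, if anything, more complete than the paper's.
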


\begin{proof} Since $\cH_{K,\bx}$ is closed, we have the decomposition
\begin{equation}\label{e:decora}
\cH_K=\cH_{K,\bx}\oplus \cH_{K,\bx}^\perp.
\end{equation}
If $f$ is a solution to the minimisation problem \eqref{e:fezga}, in view of Lemma~\ref{l:projection}, it follows 
that $f\in \cH_{K,\bz}$, and the conclusion follows.
\end{proof}

The main theorem of this section is a representer theorem under certain general and natural 
assumptions.

\begin{theorem}\label{t:fdcont}
Assume that the loss functions $V_{x_j}(y_j,\cdot)$ 
are bounded from below and continuous, for all $j=1,\ldots,l$, 
and that all Hilbert spaces $\cW_{x_1},\ldots,\cW_{x_{l+u}}$ have finite dimensions. 
Then the minimisation problem 
\eqref{e:fezga} has a solution $f_{\bz,\gamma}$ and, for any such a solution, there exist 
$a_1,\ldots,a_{l+u}$, with $a_j\in \cW_{x_j}$ for all $j=1,\ldots,l+u$, such that
\begin{equation*}
f_{\bz,\gamma}=\sum_{j=1}^{l+u} K_{x_j} a_j.
\end{equation*}
\end{theorem}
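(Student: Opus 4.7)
The plan is to combine Proposition~\ref{p:minimizer}, which already delivers the representation formula once a minimiser is known to exist, with a coercivity-plus-continuity argument that produces the minimiser. The assumption that all $\cW_{x_i}$ are finite dimensional is what makes both steps work: on the one hand it is precisely the hypothesis of Proposition~\ref{p:minimizer} that guarantees $\cH_{K,\bx}$ is closed (being the image of the finite dimensional space $\bW^{l+u}$ under the bounded linear map $(w_i) \mapsto \sum_{i=1}^{l+u} K_{x_i} w_i$, hence itself finite dimensional); on the other hand it reduces the existence question to a minimisation problem on a finite dimensional Hilbert space, where coercivity implies attainment.

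First I would use Lemma~\ref{l:projection} to reduce the problem from $\cH_K$ to $\ol{\cH}_{K,\bx}$. Since $\cH_{K,\bx}$ is closed under the finite dimensionality assumption, $\ol{\cH}_{K,\bx} = \cH_{K,\bx}$ is a finite dimensional subspace, and we have $\inf_{\cH_K} \cI = \inf_{\cH_{K,\bx}} \cI$. Next I would verify that $\cI$ restricted to $\cH_{K,\bx}$ is continuous: the evaluation maps $f \mapsto K_{x_j}^* f$ are bounded from $\cH_K$ to the finite dimensional $\cW_{x_j}$, the combination operators $C_{x_j}$ are bounded, so each $f \mapsto V_{x_j}(y_j, C_{x_j} K_{x_j}^* f)$ is continuous by the assumed continuity of $V_{x_j}(y_j, \cdot)$; the regularisation term $\gamma_A \|f\|_{\cH_K}^2$ is continuous, and the bilinear form $\langle \bef, M\bef\rangle_{\bW^{l+u}}$ is continuous since $S_\bx$ is bounded and $M$ is bounded.

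The key analytic point is coercivity on $\cH_{K,\bx}$. Using that each $V_{x_j}(y_j,\cdot)$ is bounded from below by some constant $m_j \in \RR$, and that $M \geq 0$ yields $\gamma_I \langle \bef, M\bef\rangle_{\bW^{l+u}} \geq 0$, we obtain
\begin{equation*}
\cI(f) \geq \frac{1}{l} \sum_{j=1}^l m_j + \gamma_A \|f\|_{\cH_K}^2, \qquad f\in\cH_K.
\end{equation*}
Since $\gamma_A > 0$, this forces $\cI(f) \to +\infty$ as $\|f\|_{\cH_K}\to\infty$ within $\cH_{K,\bx}$. A continuous coercive function on a finite dimensional Hilbert space attains its infimum (its sublevel sets are closed and bounded, hence compact), so there exists $f_{\bz,\gamma} \in \cH_{K,\bx}$ with $\cI(f_{\bz,\gamma}) = \inf_{\cH_{K,\bx}} \cI = \inf_{\cH_K} \cI$.

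Finally, having shown existence of a minimiser, Proposition~\ref{p:minimizer} applies and gives the representation $f_{\bz,\gamma} = \sum_{j=1}^{l+u} K_{x_j} a_j$ for some $a_j \in \cW_{x_j}$. In fact, inspecting inequality \eqref{e:desiv} shows that the gap between $\cI(f)$ and $\cI(P_{\ol\cH_{K,\bx}}f)$ equals $\gamma_A \|f_1\|_{\cH_K}^2$ with $f_1 = f - P_{\ol\cH_{K,\bx}}f$, and $\gamma_A > 0$ forces $f_1 = 0$ for every minimiser, so every solution automatically lies in $\cH_{K,\bx}$ and admits the stated expansion. The only mildly delicate point is checking coercivity uniformly in the presence of nonconvex $V_{x_j}$, but the boundedness from below is precisely what bypasses this issue, so no real obstruction arises.
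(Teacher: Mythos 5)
Your proposal is correct and follows essentially the same route as the paper: reduce to the finite dimensional subspace $\cH_{K,\bx}$ via Lemma~\ref{l:projection}, establish continuity and coercivity of $\cI$ from the lower bounds on the $V_{x_j}$ and $\gamma_A>0$, attain the infimum by compactness of closed bounded sets in finite dimensions, and invoke Proposition~\ref{p:minimizer} for the representation. Your closing observation that the gap in \eqref{e:desiv} equals $\gamma_A\|f_1\|^2_{\cH_K}$, so every minimiser necessarily lies in $\cH_{K,\bx}$, is a point the paper leaves implicit in Proposition~\ref{p:minimizer} and is a worthwhile clarification.
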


\begin{proof} We first observe that, since all loss functions $V_{x_j}(y_j,\cdot)$, $j=1,\ldots,l+u$, are lower 
bounded and $M$ is positive semidefinite, from \eqref{e:ielef}
it follows that $\cI(f)$ is lower bounded and hence
its infimum exists as a real number. Since all the Hilbert spaces $\cW_{x_1},\ldots,\cW_{x_{l+u}}$ have 
finite dimensions it follows that the subspace $\cH_{K,\bx}$ has finite dimension and hence it is closed.
Then, from Lemma~\ref{l:projection} it follows that
\begin{equation*}
-\infty<\inf_{f\in \cH_K} \cI(f)=\inf_{f\in\cH_{K,\bx}} \cI(f).
\end{equation*}
So, it remains only to show that the latter infimum is attained.

Indeed, since $\gamma_A>0$ and the loss functions $V_{x_j}(y_j,\cdot)$ 
are bounded from below for all $j=1,\ldots,l$,
 it follows that 
\begin{equation}\label{e:lim}
\lim_{\|f\|_{\cH_K}\ra\infty} \cI(f)=+\infty.
\end{equation}
Since all loss functions $V_{x_j}(y_j,\cdot)$ are continuous, for $j=1,\ldots,l$, and the evaluation 
functionals on $\cH_K$ are continuous as well, see Theorem~\ref{t:evop}, 
it follows that $\cI$ is continuous on $\cH_K$. 
Let $f_0\in\cH_{K,\bx}$ be arbitrary but fixed. From \eqref{e:lim}, for $\epsilon>0$ there exists $\delta>0$ 
such that 
\begin{equation}\label{e:cig}
\cI(f)\geq \cI(f_0)+\epsilon\mbox{ for all }f\in\cH_{K,\bx}\mbox{ with }\|f-f_0\|_{\cH_K}>\delta.\end{equation} 
We consider now the
continuous function $\cI$ restricted to the closed ball in $\cH_{K,\bx}$
\begin{equation*}\ol B_\delta^{\cH_{K,\bx}}(f_0)=\{f\in\cH_{K,\bx}\mid \|f-f_0\|_{\cH_K}\leq \delta\},\end{equation*} 
which is compact, since the vector space $\cH_{K,\bx}$ is finite dimensional. 
This implies that the infimum of $\cI$ on $\ol B_\delta^{\cH_{K,\bx}}(f_0)$ is attained. In view of \eqref{e:cig} it follows
that
\begin{equation*}
\inf_{f\in \cH_{K,\bx}}\cI(f)=\inf\{\cI(f)\mid f\in \ol B_\delta^{\cH_{K,\bx}}(f_0)\},
\end{equation*}
and the proof is finished.
\end{proof}

In view of Proposition~\ref{p:minimizer}, if the loss functions $V_{x_i}(y_i,\cdot)$ are
convex for all $i=1,2,\ldots,l$, then the assumption on finite dimensionality of the spaces $\cW_{x_i}$ 
for $i=1,\ldots,l+u$, can be slightly weaker. We first record a result that is known but for which 
we include a proof, for the reader's convenience. To this end, we recall some basic definitions.
  If $\cV$ is a vector space, a subset $A\subseteq \cV$ is \emph{convex} if for any $x,y\in A$ and $\lambda\in (0,1)$ we have $(1-\lambda)x+\lambda y\in A$.
  A function $f:\dom(f) \to \mathbb{R}$ is \emph{convex} if $\dom(f)$ is a convex set in $\cV$ and
  \begin{equation*}
      f(\lambda x + (1-\lambda)y) \leq \lambda f(x) + (1-\lambda)f(y)
  \end{equation*}
  for each $x,y \in \dom(f)$ and $\lambda \in (0,1)$. 
  In addition, $f$ is \emph{strictly convex} if
  \begin{equation*}
      f(\lambda x + (1-\lambda)y) < \lambda f(x) + (1-\lambda)f(y)
  \end{equation*}
  for each $x,y \in \dom(f)$ such that $x\neq y$ and $\lambda \in (0,1)$.

\begin{lemma}\label{l:semconv}
\emph{(a)} Given a vector space $\cV$ that is endowed with a seminorm $\norm{\cdot}$, 
the square of the seminorm $\norm{\cdot} ^2\colon \cV \to \mathbb{R}$ is a convex function on $\cV$. 

\emph{(b)} If, in addition, $\norm{\cdot} $ is a norm associated to an inner product on the real vector space $\cV$, 
then $\norm{\cdot} ^2$ becomes strictly convex.
\end{lemma}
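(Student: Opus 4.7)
For part (a), the plan is to combine the triangle inequality for the seminorm with the convexity of the scalar function $t\mapsto t^2$ on $\RR_+$. Given $x,y\in\cV$ and $\lambda\in(0,1)$, the triangle inequality yields
\begin{equation*}
\|\lambda x+(1-\lambda) y\|\leq \lambda\|x\|+(1-\lambda)\|y\|,
\end{equation*}
and squaring both sides preserves the inequality since both sides are nonnegative. Then I would show that
\begin{equation*}
(\lambda\|x\|+(1-\lambda)\|y\|)^2\leq \lambda\|x\|^2+(1-\lambda)\|y\|^2,
\end{equation*}
which after expansion reduces to the elementary inequality $\lambda(1-\lambda)(\|x\|-\|y\|)^2\geq 0$. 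Chaining the two inequalities gives the desired convexity of $\|\cdot\|^2$.

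For part (b), I would exploit the extra rigidity provided by the inner product. Denote by $\langle\cdot,\cdot\rangle$ the inner product generating the norm. For $x,y\in\cV$ with $x\neq y$ and $\lambda\in(0,1)$, expanding bilinearly produces the identity
\begin{equation*}
\lambda\|x\|^2+(1-\lambda)\|y\|^2-\|\lambda x+(1-\lambda) y\|^2
=\lambda(1-\lambda)\bigl(\|x\|^2-2\langle x,y\rangle+\|y\|^2\bigr)
=\lambda(1-\lambda)\|x-y\|^2.
\end{equation*}
Since $\lambda(1-\lambda)>0$ and $\|x-y\|^2>0$ whenever $x\neq y$ (here using that $\|\cdot\|$ is a norm, not merely a seminorm), the right-hand side is strictly positive, yielding strict convexity.

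There is no real obstacle here: part (a) reduces to the scalar fact that $t\mapsto t^2$ is convex on $\RR_+$ composed monotonically with the triangle inequality, and part (b) is an instance of the parallelogram-type identity, which turns the inequality into an equality with an explicit nonnegative remainder. The only subtle point worth flagging is that in part (a) the conclusion is merely convex (not strict), because the triangle inequality for a general seminorm can saturate (e.g.\ when $y=\alpha x$ with $\alpha\geq 0$, or when $\|x-y\|=0$ with $x\neq y$); this is exactly why part (b) requires the additional hypothesis that $\|\cdot\|$ comes from an inner product and is a genuine norm.
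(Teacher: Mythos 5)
Your proposal is correct in both parts. For part (a) your argument is essentially identical to the paper's: square the triangle inequality and reduce the resulting estimate to the elementary inequality $\lambda(1-\lambda)\left(\norm{x}-\norm{y}\right)^2\geq 0$. For part (b), however, you take a genuinely different and cleaner route. You compute the convexity defect directly via the bilinear expansion
\begin{equation*}
\lambda\norm{x}^2+(1-\lambda)\norm{y}^2-\norm{\lambda x+(1-\lambda)y}^2=\lambda(1-\lambda)\norm{x-y}^2,
\end{equation*}
which is strictly positive for $x\neq y$ and $\lambda\in(0,1)$ precisely because $\norm{\cdot}$ is a genuine norm. The paper instead argues by contradiction: assuming equality in the convexity inequality, it first extracts $\norm{x}=\norm{y}$ from the final step of part (a), then derives $\innerprod{x}{y}=\norm{x}^2=\norm{y}^2$, invokes the equality case of the Schwarz inequality to get $x=ty$ with $t=\pm 1$, and rules out both values of $t$. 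Your identity buys brevity and makes the strictness quantitatively explicit (the gap is exactly $\lambda(1-\lambda)\norm{x-y}^2$), whereas the paper's argument isolates the precise mechanism by which equality could occur and shows each possibility is impossible. Your closing remark correctly identifies why strictness fails for general seminorms, which matches the logic of the paper's hypotheses.
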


\begin{proof} (a)
  Let $f_0, f_1 \in \cV, \alpha \in (0,1)$. Then by triangle inequality we have
\begin{align*}
    \norm{\alpha f_0 + (1-\alpha)f_1}  &\leq 
    \norm{\alpha f_0}  +  \norm{(1-\alpha)f_1}  = \alpha\norm{ f_0}  + (1-\alpha)\norm{f_1} ,
\end{align*}
hence, by squaring both sides we get
\begin{equation*}
    \norm{\alpha f_0 + (1-\alpha)f_1} ^2 \leq \alpha^2\norm{ f_0}^2 + (1-\alpha)^2\norm{f_1}^2 + 2\alpha(1-\alpha)\norm{ f_0} \norm{f_1} .
\end{equation*}
Further on, if we add and subtract $-\alpha\norm{f_0}^2 - (1-\alpha)\norm{f_1}^2$ from the 
right hand side, we get
\begin{align}
\norm{\alpha f_0 + (1-\alpha)f_1}^2 &\leq \alpha^2\norm{ f_0}^2 + (1-\alpha)^2\norm{f_1}^2 
 + 2\alpha(1-\alpha)\norm{ f_0} \norm{f_1}  \nonumber \\
& \ \ \ \   -\alpha\norm{f_0}^2 - (1-\alpha)\norm{f_1} ^2 + \alpha\norm{f_0}^2 
+ (1-\alpha)\norm{f_1}^2\nonumber \\
& = (\alpha^2-\alpha)\norm{ f_0}^2 + ((1-\alpha)^2-(1-\alpha))\norm{f_1}^2  \nonumber \\
& \ \ \ \ +2\alpha(1-\alpha)\norm{ f_0} \norm{f_1}  + \alpha\norm{f_0}^2 + (1-\alpha)\norm{f_1}^2 
\nonumber \\
& = -\alpha(1-\alpha)(\norm{f_0} -\norm{f_1} )^2 + \alpha\norm{f_0}^2 + (1-\alpha)\norm{f_1}^2
\nonumber \\
& \leq  \alpha\norm{f_0}^2 + (1-\alpha)\norm{f_1}^2.
\label{e:scn}
\end{align}
This shows that $\norm{\cdot}^2 $ is convex.

(b) We assume now that $\norm{\cdot} $ is a norm associated to an inner product $\langle\cdot,\cdot\rangle$ 
on a real vector space $\cV$, that $f_0\neq f_1$, $\alpha\in (0,1)$, and that
\begin{equation}\label{e:naf}
    \norm{\alpha f_0 + (1-\alpha)f_1}^2  =  \alpha \norm{f_0}^2 + (1-\alpha)\norm{f_1}^2 ,
\end{equation} hence, by the last step in \eqref{e:scn}, it follows that 
$\norm{f_0}  = \norm{f_1} $. Then, by \eqref{e:naf} and
since $\norm{f_0}  = \norm{f_1} $, we get
\begin{align*}
  \norm{f_0}^2 & = \norm{\alpha f_0 + (1-\alpha)f_1}^2  = \langle \alpha f_0+(1-\alpha)f_1,\alpha f_0+(1-\alpha)f_1\rangle \\
  & = \alpha^2\norm{f_0}^2+2\alpha(1-\alpha)\langle f_0,f_1\rangle + (1-\alpha)^2 \norm{f_1}^2.  
\end{align*}
Taking into account that $\norm{f_0}=\norm{f_1}$ and that $\alpha(1-\alpha)\neq 0$, 
from here it follows that 
\begin{equation}\label{e:laf}\langle f_0,f_1\rangle=\norm{f_0}^2=\norm{f_1}^2,\end{equation}
hence we have equality in the Schwarz inequality and, consequently, $f_0=t f_1$ for some $t\in \RR$. Since 
$\norm{f_0}=\norm{f_1}$ it follows that $t=\pm 1$. But $t=1$ is not possible since $f_0\neq f_1$, while $f_0=-f_1$ is 
not possible because, by \eqref{e:laf}, this would imply $f_0=0=f_1$.
\end{proof}

\begin{theorem}\label{t:fdconv} Assume that all the underlying vector spaces are real, 
that the subspace $\cH_{K,\bx}$ is closed,
and that the loss functions $V_{x_i}(y_i,\cdot)$ are
convex for all $i=1,2,\ldots,l$. Then, the minimisation problem \eqref{e:fezga} has
a unique solution $f_{\bz,\gamma}$ and there exist 
$a_1,\ldots,a_{l+u}$, with $a_j\in \cW_{x_j}$ for all $j=1,\ldots,l+u$, such that
\begin{equation*}
f_{\bz,\gamma}=\sum_{j=1}^{l+u} K_{x_j} a_j.
\end{equation*}
\end{theorem}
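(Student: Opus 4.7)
\textbf{Proof plan for Theorem~\ref{t:fdconv}.} The strategy is to turn the finite-dimensional compactness argument of Theorem~\ref{t:fdcont} into a strict-convexity/coercivity argument. Since Proposition~\ref{p:minimizer} already delivers the representation form as soon as a minimiser exists (using that $\cH_{K,\bx}$ is closed, hence equal to $\ol\cH_{K,\bx}$), the whole task reduces to proving \emph{existence} and \emph{uniqueness} of a minimiser.

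First, I would show that $\cI$ is strictly convex on the real Hilbert space $\cH_K$. Each map $f\mapsto V_{x_i}(y_i,C_{x_i}K_{x_i}^*f)$ is the composition of the convex function $V_{x_i}(y_i,\cdot)$ with the continuous linear operator $C_{x_i}K_{x_i}^*\colon\cH_K\ra\cY_{x_i}$, hence convex. The manifold/multiview term $f\mapsto\gamma_I\langle\bef,M\bef\rangle_{\bW^{l+u}}$ is a continuous quadratic form associated to the positive semidefinite operator $M\geq 0$, hence convex. Finally, the Tikhonov term $\gamma_A\|f\|_{\cH_K}^2$ is strictly convex by Lemma~\ref{l:semconv}(b), which applies because all underlying spaces are real and $\gamma_A>0$. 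A sum of convex functions with at least one strictly convex summand is strictly convex, so $\cI$ is strictly convex on $\cH_K$, which settles uniqueness provided a minimiser exists.

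Second, I would establish coercivity: $\cI(f)\to+\infty$ as $\|f\|_{\cH_K}\to\infty$. Each convex function $V_{x_i}(y_i,\cdot)$ admits a continuous affine minorant on $\cY_{x_i}$, so there exist constants $\alpha_i,\beta_i\geq 0$ with
\begin{equation*}
V_{x_i}(y_i,C_{x_i}K_{x_i}^*f)\geq -\alpha_i\|f\|_{\cH_K}-\beta_i,\quad i=1,\ldots,l.
\end{equation*}
Since $\gamma_I\langle\bef,M\bef\rangle_{\bW^{l+u}}\geq 0$, we obtain $\cI(f)\geq\gamma_A\|f\|_{\cH_K}^2-\alpha\|f\|_{\cH_K}-\beta$ for some $\alpha,\beta\geq 0$, and the quadratic term dominates.

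Third, I would invoke the standard direct method: $\cI$ is convex and, being a sum of continuous convex functions of $f$, is norm-continuous, hence (being convex) weakly lower semicontinuous on $\cH_K$. By Lemma~\ref{l:projection} and the hypothesis that $\cH_{K,\bx}$ is closed, $\inf_{f\in\cH_K}\cI(f)=\inf_{f\in\cH_{K,\bx}}\cI(f)$; coercivity confines the infimum to a bounded (hence weakly relatively compact) subset of the closed convex set $\cH_{K,\bx}$, so weak lower semicontinuity yields a minimiser $f_{\bz,\gamma}\in\cH_{K,\bx}$, and strict convexity makes it unique. Since $f_{\bz,\gamma}\in\cH_{K,\bx}$, the definition \eqref{e:hekab} provides the desired coefficients $a_1,\ldots,a_{l+u}$ with $a_j\in\cW_{x_j}$ and $f_{\bz,\gamma}=\sum_{j=1}^{l+u}K_{x_j}a_j$; alternatively one appeals directly to Proposition~\ref{p:minimizer}.

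The main obstacle, compared to the finite-dimensional Theorem~\ref{t:fdcont}, is the existence step when $\cH_{K,\bx}$ may be infinite dimensional: finite dimensionality is replaced by weak lower semicontinuity plus coercivity, both of which must be derived from convexity of the $V_{x_i}(y_i,\cdot)$. The continuity of these real-valued convex functions (needed to pass from convexity to lower semicontinuity on $\cY_{x_j}$) is automatic in finite dimensions and can be either assumed or read off from the local boundedness properties one typically has for loss functions of practical interest.
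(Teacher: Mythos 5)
Your proposal follows the same skeleton as the paper's proof of Theorem~\ref{t:fdconv} --- strict convexity of $\cI$ obtained by combining convexity of the composed loss terms with Lemma~\ref{l:semconv}, followed by an appeal to Proposition~\ref{p:minimizer} for the representation --- but it is in fact more complete on one essential point. The paper's own proof passes directly from ``$\cI$ is strictly convex'' to ``the minimisation problem \eqref{e:fezga} has a unique solution,'' which only settles uniqueness: strict convexity alone does not produce a minimiser (consider $t\mapsto \emath^t$ on $\RR$). Your second and third steps --- an affine minorant for each convex $V_{x_i}(y_i,\cdot)$ giving coercivity $\cI(f)\geq \gamma_A\|f\|_{\cH_K}^2-\alpha\|f\|_{\cH_K}-\beta$, then the direct method (weak lower semicontinuity of a continuous convex function on the weakly closed set $\cH_{K,\bx}$, weak compactness of bounded sets) --- supply exactly the existence argument the paper omits, and they are correct. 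The one residual caveat is the one you flag yourself: a finite convex function on an infinite-dimensional $\cY_{x_i}$ need not be continuous nor admit a continuous affine minorant (a discontinuous linear functional is a counterexample), so either the $\cY_{x_i}$ should be finite dimensional or continuity (equivalently, lower semicontinuity or local boundedness) of the $V_{x_i}(y_i,\cdot)$ should be added as a hypothesis; note that the paper's statement needs the same qualification implicitly, since without it even the asserted existence fails. With that hypothesis made explicit, your argument is a strict improvement on the printed proof.
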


\begin{proof} Consider the function $\bV^l\colon \bY^l\times \bY^l\ra \RR$ defined by
\begin{equation}\label{e:bvl}
\bV^l(\by,\by^\prime):=\sum_{j=1}^l V_{x_j}(y_j,y_j^\prime),\quad \by=(y_1,\ldots,y_l),\ \by^\prime=(y_1^\prime,\ldots,
y_l^\prime),
\end{equation}
and observe that, for each fixed $\by\in \bY$, the function $\bV^l(\by,\cdot)$ is convex on $\bY$, since all maps 
$V_{x_j}$ are convex in the second argument, $j=1,\ldots,l$. Consequently, in the definition of $\cI$ at
\eqref{e:ielef}, the first term is a convex function. Since the second term is a norm, it is a strictly convex function, 
while the third term is a seminorm, hence a convex function as well, by Lemma~\ref{l:semconv}. 
Thus, $\cI$ is a strictly convex function and hence
the minimisation problem \eqref{e:fezga} has a unique solution. Then the conclusion follows from 
Proposition~\ref{p:minimizer}.
\end{proof}

\begin{remark}\label{r:inf} 
 {Theorem~\ref{t:fdconv} contains Theorem 2 in \cite{MBM} in the case when 
the subspace $\cH_{K,\bx}$, see \eqref{e:hekab}, is closed. This happens, for example, if
the Hilbert space $\cW$ in that theorem is finite dimensional. In 
\cite{MBM} the authors claim that the result is true even in the case when $\cW$ is an infinite dimensional space, 
which is not substantiated by the proof they provide. More precisely, the gap in that proof is that the subspace
$\cH_{K,\bx}$ might not be finite dimensional and hence it might not be closed, which implies that, we have the
decomposition \eqref{e:decor} and not the decomposition \eqref{e:decora}.
Consequently, the only conclusion that can be drawn
is that the minimiser $f_{\bz,\gamma}$ belongs to the closure of $\cH_{K,\bx}$, and hence can only be 
approximated in the norm of $\cH_K$ by sums of 
type $\sum_{j=1}^{l+u} K_{x_j} a_j$, but it may never equal such a sum.}
\end{remark}

\section{Differentiable Loss Functions} 
\subsection{Preliminary Results on Differentiable Optimisation.}
Throughout this section, we assume that all vector spaces are real. The definitions and proofs of facts 
recalled in this 
subsection are from \cite{Peypouquet}.  
 If $\cX$ is a normed space, the \emph{directional derivative} of a function 
  $f\colon \dom(f)(\subseteq \cX)\to\mathbb{R}$ at an interior point $x\in \dom(f)$ in the direction $h\in \cX$ is given by
  \begin{equation*}
      f'(x:h) = \lim_{t \to 0} \dfrac{f(x+th)-f(x)}{t},
  \end{equation*}
  provided that the limit exists.
 A function $f\colon \dom(f)(\subseteq \cX) \to \mathbb{R}$ is \emph{Gâteaux differentiable} at an interior 
point $x\in \dom(f)$ if $f$ has directional 
derivatives for all directions at $x$ and $\phi_x(h):=f'(x:h)$ is linear and continuous in $h$. In this case, we 
denote the \emph{Gâteaux derivative} $\nabla_x f\in \cB(\cX,\RR)=X^*$ by the \emph{gradient} notation
    \begin{equation*}
        (\nabla_x f)h := \phi_x(h),\quad h\in \cX.
    \end{equation*}
  
    In general, if $\cX$ and $\cY$ are Banach real spaces and $U\subseteq \cX$ is open and 
    $F:\cX\to \cY$, then $F$ \emph{has directional derivative for all directions at point $x\in U$} if
    \begin{equation*}
        \lim_{\tau \to 0}\dfrac{F(x+\tau h)-F(x)}{\tau}
    \end{equation*}
    exists for any $h\in \cX$. In this case we define the map $\nabla_x F\colon \cX \to \cY$ as 
    \begin{equation*}
       ( \nabla_x F)h:= \lim_{\tau \to 0}\dfrac{F(x+\tau h)-F(x)}{\tau},\quad h\in \cX.
    \end{equation*}
 
 In the following we recall some basic facts.
 
 \begin{theorem}[Chain Rule for Directional Derivative]\label{t:chain}
 Assume that $\cX,\cY,\cZ$ are Banach spaces, $F\colon \cX\to \cY$, $G\colon \cY\to \cZ$ and there exists $U\subseteq \cX$ and 
 $V \subseteq \cY$ open such that $F(U)\subseteq \cV$, 
 $G$ has directional derivatives for all directions at $y \in \cV$, and $F$ has 
 directional derivatives for all directions at $x\in U$. 
 If $\nabla_x F$ and $\nabla_y G$ are continuous in $x\in U$ and $y\in V$, respectively, then, for any 
 $h\in \cX$,
  we have
  \begin{equation*}
      \nabla_x(G\circ F)(h) = \nabla_{F(x)}G(\nabla_x F(h)).
  \end{equation*}
  \end{theorem}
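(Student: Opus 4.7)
The plan is to compute $\nabla_x(G\circ F)(h)$ directly from the definition, using continuity of the Gâteaux derivatives to reduce matters to a Banach-valued mean value argument. Fix $h\in \cX$ and decompose the error as
\begin{equation*}
G(F(x+\tau h))-G(F(x))-\tau\nabla_{F(x)}G(\nabla_xF(h)) = A(\tau)+B(\tau),
\end{equation*}
where
\begin{align*}
A(\tau) &:= G(F(x+\tau h))-G(F(x))-\nabla_{F(x)}G\bigl(F(x+\tau h)-F(x)\bigr),\\
B(\tau) &:= \nabla_{F(x)}G\bigl(F(x+\tau h)-F(x)-\tau\nabla_xF(h)\bigr).
\end{align*}
I would show that both $A(\tau)/\tau$ and $B(\tau)/\tau$ vanish in $\cZ$ as $\tau\to 0$.

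The term $B(\tau)/\tau$ is immediate: directional differentiability of $F$ at $x$ gives $\bigl(F(x+\tau h)-F(x)\bigr)/\tau\to \nabla_xF(h)$ in $\cY$, and since $\nabla_{F(x)}G\in \cB(\cY,\cZ)$ is bounded linear, $B(\tau)/\tau\to 0$ follows. For $A(\tau)$, note first that $F(x+\tau h)\to F(x)$ as $\tau\to 0$ (from existence of the directional derivative in direction $h$), so for $|\tau|$ small the whole segment $\{F(x)+s(F(x+\tau h)-F(x)) : s\in[0,1]\}$ lies in the open set $V$. Writing $v_\tau := F(x+\tau h)-F(x)$, for each $z^*\in \cZ^*$ the scalar auxiliary map
\begin{equation*}
\psi_{z^*}(s) := z^*\bigl(G(F(x)+s v_\tau)\bigr),\quad s\in[0,1],
\end{equation*}
is classically $C^1$ with $\psi_{z^*}'(s) = z^*\bigl(\nabla_{F(x)+s v_\tau}G(v_\tau)\bigr)$, by a direct one-variable computation using the Gâteaux derivative of $G$ together with the continuity of $y\mapsto \nabla_yG$. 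The classical fundamental theorem of calculus applied to $\psi_{z^*}$, followed by Hahn--Banach to lift the identity back to $\cZ$, then produces
\begin{equation*}
A(\tau) = \int_0^1 \bigl[\nabla_{F(x)+s v_\tau}G - \nabla_{F(x)}G\bigr](v_\tau)\de s.
\end{equation*}
Dividing by $\tau$, the factor $v_\tau/\tau$ stays bounded (converging to $\nabla_xF(h)$), while the operator bracket tends to $0$ uniformly in $s\in[0,1]$ by continuity of $y\mapsto \nabla_yG$ and the fact that $v_\tau\to 0$; hence $A(\tau)/\tau\to 0$ in $\cZ$.

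The main obstacle is producing the integral representation for $A(\tau)$, which is a mean value / fundamental theorem of calculus step for a vector-valued function whose only regularity is continuous Gâteaux differentiability. The continuity hypothesis on $y\mapsto \nabla_yG$ is indispensable precisely because it makes each scalar $\psi_{z^*}$ honestly $C^1$ on $[0,1]$ and thereby activates the classical one-variable fundamental theorem. Once this identity is established, the chain rule follows as above, and essentially the same argument shows that $x\mapsto \nabla_x(G\circ F)$ inherits continuity from that of $\nabla F$ and $\nabla G$.
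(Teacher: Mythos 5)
The paper does not actually prove this statement: Theorem~\ref{t:chain} is listed among ``basic facts'' recalled from \cite{Peypouquet}, so there is no in-paper argument to compare against. Judged on its own, your proof is correct and is essentially the standard argument for the chain rule under continuous G\^ateaux differentiability: the splitting into $A(\tau)$ (error of $G$ against its derivative at $F(x)$ along the increment $v_\tau$) and $B(\tau)$ (error of $F$ against its directional derivative, pushed through the bounded linear map $\nabla_{F(x)}G$) is the right decomposition, the scalarisation via $z^*\in\cZ^*$ plus the one-variable fundamental theorem of calculus and Hahn--Banach legitimately yields the integral representation $A(\tau)=\int_0^1\bigl[\nabla_{F(x)+sv_\tau}G-\nabla_{F(x)}G\bigr](v_\tau)\,\de s$, and the segment $\{F(x)+sv_\tau\}$ does lie in $V$ for small $\tau$ since $v_\tau\to0$. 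Two points you are implicitly using and should make explicit, because the paper's definition of a vector-valued directional derivative does not grant them automatically: (i) the final estimate $A(\tau)/\tau\to0$ pulls the factor $1/\tau$ inside the bracket, which needs $\nabla_yG$ to be (at least) homogeneous --- this does follow from the two-sided limit in the definition --- and then needs the bracket applied to the bounded family $v_\tau/\tau$ to vanish, which requires $\nabla_yG$ to be a bounded linear operator and the map $y\mapsto\nabla_yG$ to be continuous in the operator norm of $\cB(\cY,\cZ)$ at $F(x)$, not merely pointwise; (ii) the step $B(\tau)/\tau\to0$ likewise uses boundedness and linearity of $\nabla_{F(x)}G$. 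These are the intended hypotheses (they hold in the paper's application, where $G=\bV^l_{\by}$ is scalar-valued and G\^ateaux differentiable), so your argument is sound once you state that the continuity of $\nabla_yG$ is meant in operator norm and that the derivatives are bounded linear operators.
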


Let $\cX$ be a Banach real space. For any fixed $x^*\in \cX^*:=\mathcal{B}(\cX;\mathbb{R})$ and any 
$y\in \cX$ we 
denote 
\begin{equation}\label{e:dbracket}\innerprod{x^*}{y} := x^*(y) \in \mathbb{R}.\end{equation}

\begin{remark}
 {If $\cH$ is a real Hilbert space, by Riesz-Fr\'echet Representation Theorem we have $\innerprod{y}{x^*} 
= \innerprod{y}{f_{x^*}}_{\cH}$ for a unique $f_{x^*}\in \cH$. For simplicity we denote $f_{x^*}$ as $ x^*$.}
\end{remark}

Let $f\colon \dom(f)(\subseteq \cX)\to \mathbb{R}$ be convex. A point $x^*\in \cX^*$ is a  \emph{subgradient} 
of $f$ at $x$ if
       \begin{equation*}
           f(y) \leq f(x) + \innerprod{x^*}{y-x}
       \end{equation*}
holds for all $y$ in a neighbourhood of $x$. The set of all subgradients of $f$ at $x$ is the \emph{subdifferential} of 
$f$ at $x$ and is denoted by $\partial f(x)$. If $\partial f(x) \neq \emptyset$, we say $f$ is \emph{subdifferentiable} at 
$x$. The \emph{domain of subdifferential} is denoted as $\dom(\partial f) = \{x\in X \mid \partial f(x) \neq \emptyset\}$. 
By definition $\dom(\partial f) \subseteq \dom(f)$.
    
    \begin{theorem}[Fermat's Rule]\label{t:fermat}
     Let $f\colon \cX\to \mathbb{R}$ be convex. Then $\hat{x}$ is a global minimiser of $f$ if and only if $0\in\partial f(\hat{x})$.
    \end{theorem}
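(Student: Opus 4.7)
The plan is to prove the two implications separately, relying only on the definition of subgradient and on convexity of $f$.

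For the forward direction, I would argue directly from the definition. If $\hat{x}$ is a global minimiser of $f$, then $f(y)\geq f(\hat{x})$ for every $y\in\cX$, which can be rewritten as
$$f(y) \geq f(\hat{x}) + \innerprod{0}{y-\hat{x}}, \quad y\in\cX,$$
where $0$ is the trivial (hence continuous linear) functional in $\cX^*$. The defining inequality of a subgradient is then satisfied on the whole of $\cX$ and, in particular, on any neighbourhood of $\hat{x}$, so $0\in\partial f(\hat{x})$.

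For the reverse direction, I would split the argument into two steps. First, if $0\in \partial f(\hat{x})$ then, by definition, there exists a neighbourhood $U$ of $\hat{x}$ such that
$$f(y) \geq f(\hat{x}) + \innerprod{0}{y-\hat{x}} = f(\hat{x}), \quad y\in U,$$
so $\hat{x}$ is at least a \emph{local} minimiser. Second, I would upgrade this to a global statement using convexity. Suppose, for contradiction, that there exists $z\in\cX$ with $f(z)<f(\hat{x})$. For $\alpha\in(0,1)$ set $w_\alpha:=(1-\alpha)\hat{x}+\alpha z$. Convexity of $f$ yields
$$f(w_\alpha) \leq (1-\alpha)f(\hat{x}) + \alpha f(z) < f(\hat{x}).$$
Since $w_\alpha\to \hat{x}$ as $\alpha\to 0^+$ in the norm of $\cX$, for $\alpha$ sufficiently small we have $w_\alpha\in U$, which contradicts the local minimality obtained in the first step. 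Hence no such $z$ exists and $\hat{x}$ is a global minimiser.

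The only delicate point is the second step of the reverse direction, namely the local-to-global upgrade: the subdifferential is defined via a local inequality, so without convexity one would only recover a local minimum. I expect this to be the main (and essentially the only) obstacle, and convexity resolves it through the standard line-segment argument above.
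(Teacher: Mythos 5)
Your proof is correct. The paper itself gives no proof of this statement --- it is recalled as a known fact from \cite{Peypouquet} --- so there is nothing to compare against; your argument is the standard one. You have also correctly noticed the one subtlety specific to this paper's formulation: the subgradient inequality here is only required to hold \emph{in a neighbourhood} of $\hat{x}$, so $0\in\partial f(\hat{x})$ gives a priori only local minimality, and your line-segment argument (with $w_\alpha=(1-\alpha)\hat{x}+\alpha z\to\hat{x}$ as $\alpha\to 0^+$) is exactly what is needed to upgrade it to a global minimum using convexity. The forward direction is immediate from the definition, as you say.
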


   Let $A$ be an open subset of $\cX$ and $T:A\to \cY$. Then given a point $x \in A$, $T$ is \emph{Fréchet differentiable} at $x$ if there exists a bounded linear operator $L_x\colon \cX\to \cY$ such that
   \begin{equation*}
       \lim_{h\to 0}\dfrac{\norm{T(x+h)-T(x)-L_xh}_{Y}}{\norm{h}_{X}} = 0.
   \end{equation*}
   In this case, due to the uniqueness of $L_x$ we define the linear bounded operator 
   $\deriv_x T:\cX\to \cY$ as
       $\deriv_x T := L_x$
   and call it the \emph{Fréchet derivative} of $T$ at $x$. 

\begin{theorem}\label{t:derip}
Let $(\mathcal{H}, \innerprod{.}{.}_\mathcal{H})$ be a Hilbert real space. Then, given $\psi \in \cX$ and $F,G\colon \cX\to \mathcal{H}$ maps that are Fréchet differentiable at $\psi$, we have
\begin{equation*}
    \deriv_\psi\innerprod{F\cdot}{G\cdot}_\mathcal{H}(h) = \innerprod{(\deriv_\psi F)(h)}{G(\psi)}_\mathcal{H} + \innerprod{F(\psi)}{(\deriv_\psi G)(h)}_\mathcal{H}
\end{equation*} for any $h\in \mathcal{H}$.
\end{theorem}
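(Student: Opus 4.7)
The plan is to verify directly that the candidate operator
\[
L(h) := \innerprod{(\deriv_\psi F)(h)}{G(\psi)}_\cH + \innerprod{F(\psi)}{(\deriv_\psi G)(h)}_\cH
\]
satisfies the defining property of the Fr\'echet derivative of the scalar map $\Phi\colon \cX\to\RR$, defined by $\Phi(x):=\innerprod{F(x)}{G(x)}_\cH$, at the point $\psi$. First I would note that $L$ is linear in $h$, as a sum of compositions of linear operators, and bounded on $\cX$, since by the Cauchy--Schwarz inequality
\[
|L(h)|\leq \|\deriv_\psi F\|\, \|G(\psi)\|_\cH \, \|h\|_\cX + \|F(\psi)\|_\cH\, \|\deriv_\psi G\| \, \|h\|_\cX.
\]
So the only content is to show that $\Phi(\psi+h)-\Phi(\psi)-L(h) = o(\|h\|_\cX)$ as $h\to 0$.

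The key algebraic step is to split the increment of the inner product by inserting a suitable middle term. Concretely, I would write
\begin{align*}
\Phi(\psi+h)-\Phi(\psi) & = \innerprod{F(\psi+h)}{G(\psi+h)-G(\psi)}_\cH + \innerprod{F(\psi+h)-F(\psi)}{G(\psi)}_\cH,
\end{align*}
and then subtract $L(h)$ to regroup the result as
\begin{align*}
\Phi(\psi+h)-\Phi(\psi)-L(h)
& = \innerprod{F(\psi+h)-F(\psi)-(\deriv_\psi F)(h)}{G(\psi)}_\cH \\
& \quad + \innerprod{F(\psi+h)}{G(\psi+h)-G(\psi)-(\deriv_\psi G)(h)}_\cH \\
& \quad + \innerprod{F(\psi+h)-F(\psi)}{(\deriv_\psi G)(h)}_\cH.
\end{align*}
This is the decomposition that isolates the Fr\'echet remainders of $F$ and $G$, plus a cross term involving the continuous increment of $F$ and the bounded action of $\deriv_\psi G$.

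The final step is to estimate each of the three terms after dividing by $\|h\|_\cX$, using the Cauchy--Schwarz inequality in $\cH$. The first term is bounded by $\|F(\psi+h)-F(\psi)-(\deriv_\psi F)(h)\|_\cH\,\|G(\psi)\|_\cH/\|h\|_\cX$, which tends to $0$ by Fr\'echet differentiability of $F$ at $\psi$. For the second term I would first note that Fr\'echet differentiability of $F$ at $\psi$ implies continuity there, so $\|F(\psi+h)\|_\cH$ stays bounded for $h$ small, and then the quotient $\|G(\psi+h)-G(\psi)-(\deriv_\psi G)(h)\|_\cH/\|h\|_\cX$ tends to $0$. For the third (cross) term, $\|F(\psi+h)-F(\psi)\|_\cH\to 0$ by continuity while $\|(\deriv_\psi G)(h)\|_\cH/\|h\|_\cX\leq \|\deriv_\psi G\|$ is bounded, so the product vanishes. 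Combining these three estimates gives the Fr\'echet differentiability of $\Phi$ at $\psi$ together with the formula $\deriv_\psi\Phi = L$, which is exactly the claim. No substantial obstacle is expected; the only subtlety is the bookkeeping in the three-term split, in particular realising that continuity of $F$ at $\psi$ (obtained for free from Fr\'echet differentiability) is what tames the cross term.
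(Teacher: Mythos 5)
Your proof is correct: the three-term decomposition of $\Phi(\psi+h)-\Phi(\psi)-L(h)$ is algebraically exact, and the three estimates (Fr\'echet remainder of $F$ against $G(\psi)$, boundedness of $F(\psi+h)$ near $\psi$ against the Fr\'echet remainder of $G$, and continuity of $F$ against the bounded operator $\deriv_\psi G$) are precisely what is needed to conclude. Note that the paper does not actually prove this statement: it is recalled without proof, with a citation to Peypouquet's book, so there is no in-paper argument to compare against; your proof is the standard product-rule argument for a bounded bilinear form composed with two Fr\'echet differentiable maps and fills that gap correctly. The only cosmetic remark is that the quantifier in the statement should read $h\in\cX$ rather than $h\in\mathcal{H}$, which your write-up implicitly (and correctly) assumes by dividing by $\|h\|_\cX$.
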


\begin{remark}\label{Derivative of Composition of Linear Operators and inner product}
   {In Theorem \ref{t:derip} if we further assume that $F,G$ are bounded linear operators, then we get
  \begin{equation*}
      \deriv_\psi \innerprod{F\cdot}{G\cdot}_{\mathcal{H}}(h) = \innerprod{Fh}{G\psi}_{\mathcal{H}} + \innerprod{F\psi}{Gh}_{\mathcal{H}}
  \end{equation*} for any $h \in \mathcal{H}$.}
\end{remark}

\begin{theorem}\label{t:gatfre}
Let $\cX,\cY$ be Banach spaces. Consider a nonempty open set $U\subset \cX$ and a map 
$F\colon U\to \cY$. If $F$ is 
Fr\'echet differentiable at $x\in U$ then it is Gâteaux differentiable at $x$ and $\nabla_x F = \deriv_x F$.
\end{theorem}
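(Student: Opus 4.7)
The plan is to unpack both definitions directly and show that the Fréchet derivative plays the role of the Gâteaux derivative. Fix an arbitrary direction $h\in\cX$; the case $h=0$ is trivial, so assume $h\neq 0$. For all real $t\neq 0$ with $|t|$ sufficiently small we have $x+th\in U$, and I will compute the Gâteaux difference quotient by subtracting $\deriv_x F\,h$ and estimating in norm:
\begin{equation*}
\left\|\frac{F(x+th)-F(x)}{t}-(\deriv_x F)h\right\|_\cY
=\|h\|_\cX\cdot\frac{\|F(x+th)-F(x)-(\deriv_x F)(th)\|_\cY}{\|th\|_\cX},
\end{equation*}
using linearity of $\deriv_x F$ to write $L_x(th)=tL_xh$ and pulling out $|t|$.

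The first key step is to observe that as $t\to 0$ the vector $th$ tends to $0$ in $\cX$, so by the Fréchet differentiability hypothesis the second factor on the right hand side tends to $0$. Consequently the directional derivative of $F$ at $x$ in direction $h$ exists and equals $(\deriv_x F)h$. Since this holds for every $h\in\cX$, the map $\phi_x\colon h\mapsto F'(x:h)$ is simply $\deriv_x F$, which is linear and bounded (hence continuous) by definition of the Fréchet derivative. This is exactly the definition of Gâteaux differentiability, so $F$ is Gâteaux differentiable at $x$ and $\nabla_x F=\deriv_x F$.

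I do not expect any serious obstacle here: the proof is essentially a change of variable in the limit, exploiting the stronger uniform-in-direction control provided by Fréchet differentiability. The only subtle point is remembering to factor $\|h\|_\cX$ out of the numerator so that the remaining quotient matches the Fréchet limit with the increment $th\to 0$, and being explicit that $\deriv_x F$ is already a bounded linear operator so the continuity and linearity requirements of Gâteaux differentiability are automatic.
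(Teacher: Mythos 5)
Your proof is correct: the identity
\begin{equation*}
\left\|\frac{F(x+th)-F(x)}{t}-(\deriv_x F)h\right\|_\cY
=\|h\|_\cX\cdot\frac{\|F(x+th)-F(x)-(\deriv_x F)(th)\|_\cY}{\|th\|_\cX},
\end{equation*}
valid for $h\neq 0$ and $t\neq 0$ small, together with the observation that restricting the Fr\'echet limit to increments of the form $th\to 0$ preserves its value, gives exactly the standard argument, and the linearity and boundedness of the candidate G\^ateaux derivative are indeed inherited from $\deriv_x F$. The paper does not supply its own proof of this statement --- it is recalled from the cited reference \cite{Peypouquet} --- so there is nothing to contrast with; your argument is the expected one and is complete.
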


\subsection{The Representer Theorem for Locally Differentiable Loss Functions.} \label{ss:rtldlf}
The notation in this subsection is the same as in Section~\ref{s:rmp},
only that all vector spaces are over the real field $\RR$. 
In this subsection, we show that, if we add the assumption that for any $i=1,\ldots,l$, 
$V_{x_i}(y_i,\cdot): \cY_x \to \mathbb{R}$ is Gâteaux differentiable, 
then we can allow $\cW_{x_i}$, for all 
$i=1,\ldots,l$, to be infinite dimensional and get the same conclusion 
as in Proposition~\ref{p:minimizer}. The proof of this theorem is inspired to a certain extent
by the proof of Theorem~3 in \cite{MBM}, that was proven for the special case of the least squares loss function.

As in the proof of Lemma~\ref{l:projection}, let the sampling operator 
$S_{\mathbf{x}}: \mathcal{H}_K \to \bW^{l+u}$ be defined as in \eqref{e:so}   
where $\bx=(x_1,\ldots,x_{l+u})$. 
Let also $E_{\bC,{\mathbf{x}}}:\mathcal{H}_K \to \bY^l$ be defined as in \eqref{e:ecf} and the Hilbert space
$\bY^l$ be defined as in \eqref{e:beyel}. Define the function $\bV^l\colon \bY^l\times \bY^l\ra \RR$ as in \eqref{e:bvl}
and then denote $\bV_{\by}^l\colon \bY^l\ra \RR$ as
\begin{equation}\label{e:bevey}
    \bV_{\by}^l(\by') := \bV^l(\by,\by^\prime),\quad \by^\prime=(y_1^\prime,\ldots,
y_l^\prime)\in \bY^l,
\end{equation}
where $\by=(y_1,y_2,\ldots,y_l)\in \bY^l$.

In the next theorem, which is the main result of this section, please note that
all Hilbert spaces $\cW_{x_1},\ldots,\cW_{x_{l+u}}$ are allowed to be infinite 
dimensional, without any restriction except that they are real.

\begin{theorem}\label{t:gatmin}
Assume that for any $i=1,\ldots,l,$ the loss function
$V_{x_i}(y_i,\cdot)$ is Gâteaux differentiable. If the minimisation problem \eqref{e:fezga} 
has a solution $f_{\bz,\gamma} \in \cH_K$ then there exist $a_1,\ldots,a_{l+u}$, 
with $a_j\in \cW_{x_j}$ for all $j=1,\ldots,l+u$, such that
\begin{equation}\label{e:fezagam}
f_{\bz,\gamma}=\sum_{j=1}^{l+u} K_{x_j} a_j,
\end{equation}
where the vectors $a_i\in\cW_{x_i}$, $i=1,\ldots,l+u$, satisfy the following system
\begin{align}\label{e:minval1}
2\gamma_A l a_i  +2\gamma_I l \sum_{j,k=1}^{l+u}M_{i,j}K(x_i,x_j)a_j & = 
- C_{x_i}^*(\nabla_{E_{\bC,\bx}{f_{\bz,\gamma}}}\bV_{\by}^l)_i,& \mbox{ if }i=1,\dots,l,\\
\gamma_A  a_i  +\gamma_I \sum_{j,k=1}^{l+u}M_{i,j}K(x_i,x_j)a_j & = 0,& \mbox{ if }i=l+1,\dots,l+u.\label{e:minval2}
\end{align}
Here, for arbitrary $y'\in \bY^l$, we abuse some notation and denote $\nabla_{y'}\bV_{\by}^l$ both as the original 
functional (Gâteaux derivative) and the vector that represents this functional following the notation as in 
\eqref{e:dbracket}.
\end{theorem}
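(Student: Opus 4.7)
The plan is to apply the first-order necessary optimality condition for the minimiser $f_{\bz,\gamma}$. Once $\cI$ is shown to be G\^ateaux differentiable at $f_{\bz,\gamma}$, the fact that, for every $h\in\cH_K$, the one-variable function $t\mapsto \cI(f_{\bz,\gamma}+th)$ attains its minimum at $t=0$ forces $\nabla_{f_{\bz,\gamma}}\cI(h)=0$ for all $h$; this step requires only differentiability and is independent of any convexity assumption. I then compute the G\^ateaux derivative term by term. For the loss term $\frac{1}{l}\sum_{j=1}^l V_{x_j}(y_j,C_{x_j}K_{x_j}^{*}\cdot)=\bV_{\by}^l\circ E_{\bC,\bx}$, a first-principles calculation using the boundedness of the linear map $E_{\bC,\bx}$ and the G\^ateaux differentiability of each $V_{x_j}(y_j,\cdot)$ at $C_{x_j}K_{x_j}^{*}f_{\bz,\gamma}$ yields the directional derivative $\frac{1}{l}\langle\nabla_{E_{\bC,\bx}f_{\bz,\gamma}}\bV_{\by}^l,E_{\bC,\bx}h\rangle_{\bY^l}$; this bypasses Theorem~\ref{t:chain}, whose continuity hypothesis on $\nabla V_{x_j}$ has not been assumed. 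Theorem~\ref{t:derip}, applied to the Tikhonov term and to the manifold/multiview term $\gamma_I\langle S_\bx f,MS_\bx f\rangle_{\bW^{l+u}}$, using $M=M^{*}$, gives $2\gamma_A\langle f_{\bz,\gamma},h\rangle_{\cH_K}$ and $2\gamma_I\langle MS_\bx f_{\bz,\gamma},S_\bx h\rangle_{\bW^{l+u}}$, respectively.

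Passing the three contributions through adjoints, the vanishing of $\nabla_{f_{\bz,\gamma}}\cI(h)$ for all $h\in\cH_K$ becomes the single equation in $\cH_K$,
\begin{equation*}
\tfrac{1}{l}\,E_{\bC,\bx}^{*}\nabla_{E_{\bC,\bx}f_{\bz,\gamma}}\bV_{\by}^l+2\gamma_A f_{\bz,\gamma}+2\gamma_I S_\bx^{*}MS_\bx f_{\bz,\gamma}=0.
\end{equation*}
A direct calculation based on the reproducing property of $K$, of the kind already performed in the proof of Lemma~\ref{l:projection}, identifies $E_{\bC,\bx}^{*}\by^\prime=\sum_{i=1}^{l}K_{x_i}C_{x_i}^{*}y_i^\prime$ for $\by^\prime\in\bY^l$ and $S_\bx^{*}\bw=\sum_{i=1}^{l+u}K_{x_i}w_i$ for $\bw\in\bW^{l+u}$. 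Inserted into the displayed identity and solved for $f_{\bz,\gamma}$, these yield precisely the representation \eqref{e:fezagam} with
\begin{equation*}
a_i=\begin{cases}
-\tfrac{1}{2\gamma_A l}\,C_{x_i}^{*}\bigl(\nabla_{E_{\bC,\bx}f_{\bz,\gamma}}\bV_{\by}^l\bigr)_i-\tfrac{\gamma_I}{\gamma_A}\sum_{j=1}^{l+u}M_{i,j}f_{\bz,\gamma}(x_j),& i=1,\ldots,l,\\
-\tfrac{\gamma_I}{\gamma_A}\sum_{j=1}^{l+u}M_{i,j}f_{\bz,\gamma}(x_j),& i=l+1,\ldots,l+u,
\end{cases}
\end{equation*}
where the $i$-th coefficient is read off by matching the $K_{x_i}$-summand.

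To close the system, I use that $K_{x_j}^{*}K_{x_k}=K(x_j,x_k)$, a direct consequence of the reproducing axioms (rk2) and (rk3), whence $f_{\bz,\gamma}(x_j)=K_{x_j}^{*}f_{\bz,\gamma}=\sum_{k=1}^{l+u}K(x_j,x_k)a_k$; inserting this into the formulas for $a_i$ above and clearing denominators produces the system \eqref{e:minval1}--\eqref{e:minval2}. The main obstacle I anticipate is bookkeeping, in particular tracking the block structure of $M$ through the composition $S_\bx^{*}MS_\bx$ and the two sets of indices in the final substitution; a smaller but necessary point is deriving the directional derivative of the loss term directly from the definition, so that no unverified continuity assumption on $\nabla V_{x_j}$ is silently invoked.
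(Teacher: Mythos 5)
Your proposal is correct and follows essentially the same route as the paper's proof: compute the G\^ateaux derivative of $\cI$ term by term, set it to zero at the interior minimiser, rewrite the resulting identity via the adjoints $E_{\bC,\bx}^{*}$ and $S_{\bx}^{*}$ to read off the coefficients $a_i$, and then substitute $f_{\bz,\gamma}(x_j)=\sum_k K(x_j,x_k)a_k$ to obtain the system \eqref{e:minval1}--\eqref{e:minval2}. Your two deviations are local refinements rather than a different argument, and both are sound: deriving the first-order condition from the one-variable function $t\mapsto\cI(f_{\bz,\gamma}+th)$ avoids leaning on Fermat's rule (Theorem~\ref{t:fermat}), which the paper states only for convex functions, and computing the directional derivative of $\bV_{\by}^l\circ E_{\bC,\bx}$ directly from the definition (using only that $E_{\bC,\bx}$ is bounded linear, so $E_{\bC,\bx}(f+th)=E_{\bC,\bx}f+tE_{\bC,\bx}h$) sidesteps the continuity hypothesis on the gradients required by Theorem~\ref{t:chain}, which is indeed not among the assumptions of the theorem.
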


\begin{proof} We can rewrite the map $\cI$ to be minimised, see \eqref{e:ielef}, as
\begin{equation}
    \cI(f) = \frac{1}{l}\bV_{\by}^l(E_{\bC,\bx}f) + \gamma_A \|f\|_{\cH_K}^2+ \gamma_I \langle \bef,M\bef\rangle_{\bW^{l+u}},\quad f\in\cH_K.\label{e:opcf}
\end{equation}
Assuming that the minimisation problem 
\eqref{e:fezga} has a solution $f_{\bz,\gamma}$, which is an interior point in the domain of $\cI$ and that
$\bV_{\by}^l$ is Gâteaux differentiable, by Theorem $\ref{t:chain}$, the Gâteaux derivative 
evaluated at $f\in \cH_K$ is
\begin{equation*}
\nabla_f(\bV_{\by}^l\circ E_{\bC,\bx})(h) = \nabla_{E_{\bC,\bx}f}\bV_{\by}^l(\nabla_f E_{\bC,\bx}(h)),\quad 
h\in \cH_K.
\end{equation*}
Then, by using the identification as in \eqref{e:dbracket} and Theorem \ref{t:gatfre}, for all $h\in \cH_K$ we get
\begin{align}
     \nabla_{E_{\bC,\bx}f}\bV_{\by}^l(\nabla_f E_{\bC,\bx}(h)) 
      & = \innerprod{\nabla_{E_{\bC,\bx}f}\bV_{\by}^l}{\deriv_f E_{C,\mathbf{x}}h} \nonumber
     =\innerprod{\nabla_{E_{\bC,\bx}f}\bV_{\by}^l}{E_{C,\mathbf{x}}h}_{\bY^l} \nonumber \\
     &= \innerprod{E_{C,\mathbf{x}}^*\nabla_{E_{\bC,\bx}f}\bV_{\by}^l}{h}_\mathcal{H_K}.\label{e:dercf}
\end{align}
Since 
$\norm{f}^2_{\cH_K} = \innerprod{f}{f}_{\cH_K}$, by Theorem~\ref{t:derip},
the map $\cH_k\mapsto \|f\|_{\cH_K}^2$ is Fr\'echet differentiable and hence, by Theorem~\ref{t:gatfre}, 
it is G\^ateaux differentiable.
Thus, by Theorem \ref{t:derip} and since $\cH_K$ is real we have
\begin{equation}
    \gamma_A \nabla_f\norm{\cdot}^2_{\cH_K}(h) = 2\gamma_A\innerprod{f}{h}_{\cH_K},\quad h\in \cH_K.
\label{e:dernorm}
\end{equation}
Again by using Theorem \ref{t:derip} we get
\begin{align}
    \gamma_I \nabla_f\innerprod{S_{\bx} \cdot}{MS_{\bx} \cdot}_{\bW^{l+u}}(h) &=\gamma_I \innerprod{S_{\bx}f}{MS_{\bx}h}_{\bW^{l+u}} + \gamma_I \innerprod{S_{\bx}h}{MS_{\bx}f}_{\bW^{l+u}} \nonumber
    \\&= 2\gamma_I\innerprod{S_{\bx}^*MS_{\bx}f}{h}_{\cH_K},\quad h\in \cH_K.\label{e:derman}
\end{align}

Summing up \eqref{e:dercf}, \eqref{e:dernorm} and \eqref{e:derman} we get
\begin{align}
    \nabla_f\cI(h) &= l^{-1} \innerprod{E_{C,\mathbf{x}}^*\nabla_{E_{\bC,\bx}f}\bV_{\by}^l}{h}_\mathcal{H_K} + 2\gamma_A\innerprod{f}{h}_{\cH_K} + 2\gamma_I\innerprod{S_{\bx}^*MS_{\bx}f}{h}_{\cH_K} \nonumber
    \\&=\innerprod{l^{-1}E_{C,\mathbf{x}}^*\nabla_{E_{\bC,\bx}f}\bV_{\by}^l + 2\gamma_Af +2\gamma_I{S_{\bx}^*MS_{\bx}f}}{h}_{\cH_K}, \quad h\in\cH_K. \label{e:dermin}
\end{align}
Keeping Fermat's Theorem \ref{t:fermat} in mind, since
that minimiser is an interior point of the domain, we should have the minimiser 
$f_{\bz, \gamma}\in \cH_K$ such that $\nabla_{f_{\bz,\gamma}} \cI (\cdot)=0$. By \eqref{e:dermin}, this means
\begin{equation}
    \nabla_{f_{\bz, \gamma}}\cI(h) = \innerprod{l^{-1}E_{C,\mathbf{x}}^*\nabla_{E_{\bC,\bx}{f_{\bz, \gamma}}}\bV_{\by}^l + 2\gamma_A{f_{\bz, \gamma}} +2\gamma_I{S_{\bx}^*MS_{\bx}{f_{\bz, \gamma}}}}{h}_{\cH_K} = 0
\end{equation}
for any $h\in \cH_K$. Thus, we get
\begin{equation*}
    \dfrac{1}{l}E_{C,\mathbf{x}}^*\nabla_{E_{\bC,\bx}{f_{\bz,\gamma}}}\bV_{\by}^l + 2\gamma_A{f_{\bz,\gamma}} +2\gamma_IS_{\bx}^*MS_{\bx}{f_{\bz,\gamma}} = 0
\end{equation*}
and hence
\begin{equation*}
    {f_{\bz,\gamma}} = \dfrac{-1}{2\gamma_Al}E_{C,\mathbf{x}}^*\nabla_{E_{\bC,\bx}{f_{\bz,\gamma}}}\bV_{\by}^l - \dfrac{\gamma_I}{\gamma_A}S_{\bx}^*MS_{\bx}{f_{\bz,\gamma}}.
\end{equation*}
Then by using the facts that 
$E_{\bC,\bx}^* = [K_{x_1}C_{x_1}^*\ \ K_{x_2}C_{x_2}^*\ \ \ldots\ \ K_{x_l}C_{x_l}^*]$, see \eqref{e:ecf}, 
and that, by definition, $S_{\bx}=[K_{x_1}^*\ \ K_{x_2}^*\ \ \ldots\ \ K_{x_{l+u}}^*]^T$, see \eqref{e:so},
we get
\begin{align*}
    {f_{\bz,\gamma}} &= \dfrac{-1}{2\gamma_Al}\sum_{i=1}^l K_{x_i}C_{x_i}^*(\nabla_{E_{\bC,\bx}{f_{\bz,\gamma}}}\bV_{\by}^l)_i - \dfrac{\gamma_I}{\gamma_A}\sum_{i=1}^{l+u}K_{x_i}(MS_{\bx}{f_{\bz,\gamma}})_i.\end{align*}
Thus, we can represent $f_{\bz,\gamma}$ as in \eqref{e:fezagam}, where
\begin{equation}
    a_i = \begin{cases}  \dfrac{-1}{2\gamma_Al}C_{x_i}^*(\nabla_{E_{\bC,\bx}{f_{\bz,\gamma}}}\bV_{\by}^l)_i - \dfrac{\gamma_I}{\gamma_A} (MS_{\bx}{f_{\bz,\gamma}})_i,& \mbox{ if }i=1,\dots,l\\ \\- \dfrac{\gamma_I}{\gamma_A} (MS_{\bx}{f_{\bz,\gamma}})_i,& \mbox{ otherwise.}
    \end{cases}
\end{equation}
Since
\begin{equation*}
    (MS_{\bx}f_{\bz,\gamma})_i = \sum_{k=1}^{l+u}M_{i,k}\sum_{j=1}^{l+u}K(x_k,x_j)a_j,\quad i=1,\ldots,l+u,
\end{equation*}
it follows that
\begin{equation*}
    a_i = \begin{cases}  \dfrac{-1}{2\gamma_Al}C_{x_i}^*(\nabla_{E_{\bC,\bx}{f_{\bz,\gamma}}}\bV_{\by}^l)_i - \dfrac{\gamma_I}{\gamma_A}\sum\limits_{j,k=1}^{l+u}M_{i,k}K(x_k,x_j)a_j,& \mbox{ if }i=1,\dots,l, \\ \\- \dfrac{\gamma_I}{\gamma_A}\sum\limits_{j,k=1}^{l+u}M_{i,k}K(x_k,x_j)a_j,& \mbox{ otherwise,}
    \end{cases}
\end{equation*}
which is equivalent to the system of equations \eqref{e:minval1} and \eqref{e:minval2}.
\end{proof}

\begin{remark}\label{r:opsol}
 {The system of equations \eqref{e:minval1} and \eqref{e:minval2} 
can be reformulated by means of operators; to be compared with
Theorem~4 in \cite{MBM} that was obtained for the special case of the least squares function.
Let $K[\mathbf{x}]$ denote the $(l+u)\times(l+u)$ operator valued matrix whose $(i,j)$ entry is $K(x_i,x_j)$, and
let $\mathbf{v}=(v_1,\ldots,v_{l+u})$ be the vector with entries
    \begin{equation*}
        {v}_i = \begin{cases}
            -C_{x_i}^*(\nabla_{E_{\bC,\bx}{f_{\bz,\gamma}}}\bV_{\by}^l)_i & \mbox{ if }i=1,\dots,l\\
            0,& \mbox{ otherwise,}
        \end{cases}
    \end{equation*}
    where  $C_x^*:\mathcal{Y}_x \to \mathcal{W}_x$ for any $x\in X$. Then, with notation and assumptions as in 
Theorem~\ref{t:gatmin} and letting $\mathbf{a} = (a_1,\dots,a_{l+u})$, a simple algebraic calculation, that we leave to 
the reader, shows that the system of equations \eqref{e:minval1} and \eqref{e:minval2}
coincides with the operator equation 
\begin{equation}\label{e:opmin}
  \left(2l\gamma_I M K[\mathbf{x}] + 2l\gamma_A I\right) \mathbf{a} = \mathbf{v},
\end{equation}
where both $\mathbf{a}$ and $\mathbf{v}$ are considered as column vectors. However, at this level of generality, 
from here we cannot get the unknown vector $\mathbf{a}$ because it appears in the vector $\mathbf{v}$ 
as well. From this point of view, the equation \eqref{e:opmin} is more an implicit form.}\end{remark}

\begin{corollary}\label{c:fdconvgat} With notation and assumptions as in Theorem~\ref{t:gatmin}, assume
that the loss functions $V_{x_i}$ are
convex for all $i=1,2,\ldots,l$. Then the minimisation problem \eqref{e:fezga} has
a unique solution $f_{\bz,\gamma}$ and there exist 
$a_1,\ldots,a_{l+u}$, with $a_j\in \cW_{x_j}$ for all $j=1,\ldots,l+u$, such that
\begin{equation*}
f_{\bz,\gamma}=\sum_{j=1}^{l+u} K_{x_j} a_j,
\end{equation*}
where the vectors $a_1,\ldots,a_{l+u}$ satisfy the system of equations \eqref{e:minval1} and \eqref{e:minval2}.
\end{corollary}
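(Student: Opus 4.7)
The plan is to follow the template of Theorem~\ref{t:fdconv}, but carried out in the G\^ateaux differentiable setting so that Theorem~\ref{t:gatmin} supplies the representation without any finite dimensionality or closedness hypothesis on $\cH_{K,\bx}$. First I would establish strict convexity of $\cI$. Writing $\cI$ as in \eqref{e:opcf}, the first summand is convex because each $V_{x_i}(y_i,\cdot)$ is convex and $f\mapsto E_{\bC,\bx}f$ is linear and continuous; the second summand $\gamma_A\|f\|_{\cH_K}^2$ is strictly convex by Lemma~\ref{l:semconv}(b), since all spaces are real; the third summand, being a seminorm squared because $M$ is positive semidefinite, is convex by Lemma~\ref{l:semconv}(a). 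Adding convex and strictly convex produces a strictly convex functional, so a minimiser, if it exists, is unique.

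Second, I would establish existence by the direct method, which is the step that demands extra care since $\cH_K$ may be infinite dimensional and $\cH_{K,\bx}$ may fail to be closed. Each $V_{x_i}(y_i,\cdot)$ is convex and G\^ateaux differentiable, hence lies above its tangent affine functional at any fixed base point $z_0\in\cY_{x_i}$, i.e.
\begin{equation*}
V_{x_i}(y_i,z)\geq V_{x_i}(y_i,z_0)+\langle\nabla_{z_0}V_{x_i}(y_i,\cdot),z-z_0\rangle_{\cY_{x_i}},\qquad z\in\cY_{x_i},
\end{equation*}
and composing with the bounded operator $f\mapsto C_{x_i}K_{x_i}^* f$ produces a continuous affine minorant on $\cH_K$. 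Summing these minorants and adding $\gamma_A\|f\|_{\cH_K}^2+\gamma_I\langle \bef,M\bef\rangle_{\bW^{l+u}}\geq 0$ gives the coercivity estimate
\begin{equation*}
\cI(f)\geq A-B\|f\|_{\cH_K}+\gamma_A\|f\|_{\cH_K}^2\to+\infty\quad\text{as }\|f\|_{\cH_K}\to\infty.
\end{equation*}
Convexity together with continuity (each of the three summands is continuous by continuity of the G\^ateaux differentiable convex losses, of the norm, and of the bounded bilinear form involving $M$ composed with the continuous evaluation maps via Theorem~\ref{t:evop}) yields weak lower semicontinuity on the reflexive Hilbert space $\cH_K$, so a minimiser exists by the standard argument (bounded minimising sequence has a weakly convergent subsequence whose limit attains the infimum).

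With existence and uniqueness of $f_{\bz,\gamma}$ established, Theorem~\ref{t:gatmin} applies verbatim, producing the representation $f_{\bz,\gamma}=\sum_{j=1}^{l+u}K_{x_j}a_j$ with the coefficients $a_j\in\cW_{x_j}$ satisfying the system \eqref{e:minval1}--\eqref{e:minval2}. The main obstacle is the existence step: strict convexity alone only guarantees uniqueness, and the proof of Theorem~\ref{t:fdconv} bypassed the issue by assuming $\cH_{K,\bx}$ closed. Here, since the $\cW_{x_i}$ may be infinite dimensional, I must lean on the subgradient inequality, specifically on the fact that the G\^ateaux derivatives defined in Subsection~\ref{ss:rtldlf} are by construction bounded linear functionals, to guarantee that the affine minorant is continuous and that the quadratic Tikhonov term genuinely dominates at infinity.
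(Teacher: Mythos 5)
Your proof is correct, and it is in one important respect more complete than the paper's. For uniqueness and for the final representation you follow exactly the paper's route: strict convexity of $\cI$ assembled from Lemma~\ref{l:semconv} (convex first term via composition with the linear map $E_{\bC,\bx}$, strictly convex Tikhonov term, convex manifold term), followed by an application of Theorem~\ref{t:gatmin}. The paper's own proof consists of the single sentence ``the argument is the same as in Theorem~\ref{t:fdconv} and then use Theorem~\ref{t:gatmin}'', and Theorem~\ref{t:fdconv} in turn deduces existence \emph{and} uniqueness directly from strict convexity. As you correctly observe, strict convexity only yields uniqueness; existence is the delicate point here, since the compactness argument of Theorem~\ref{t:fdcont} is unavailable when the $\cW_{x_i}$ are infinite dimensional and $\cH_{K,\bx}$ need not be closed. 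Your direct-method argument --- coercivity from the gradient inequality $V_{x_i}(y_i,z)\geq V_{x_i}(y_i,z_0)+\langle\nabla_{z_0}V_{x_i}(y_i,\cdot),z-z_0\rangle$ composed with the bounded maps $C_{x_i}K_{x_i}^*$, plus weak lower semicontinuity and reflexivity of $\cH_K$ --- supplies precisely what is missing. One small refinement: rather than asserting that each loss term is \emph{continuous} (which for a convex G\^ateaux differentiable function on an infinite dimensional space is not entirely immediate), it is cleaner to note that the same gradient inequality exhibits each summand of $\cI$ as a pointwise supremum of weakly continuous affine functionals of $f$ (the derivatives being bounded by the paper's definition of G\^ateaux differentiability), hence weakly lower semicontinuous directly; the rest of your argument then runs unchanged. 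With that adjustment your write-up is a legitimate strengthening of the proof the paper offers.
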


\begin{proof} The argument is the same as in Theorem~\ref{t:fdconv} and then use Theorem~\ref{t:gatmin}.
\end{proof}

Theorem~\ref{t:gatmin} and its Corollary~\ref{c:fdconvgat} have a 
theoretical significance and a less practical importance because, 
in the system \eqref{e:minval1} and \eqref{e:minval2}, 
the unknowns $a_1,\ldots,a_{l+u}$ appear also on the right hand side and hence it is 
more an implicit form of expressing them and not an explicit one. Because of that, for specified loss 
functions $V$, 
one should work further on these expressions in order to obtain explicit or, at least computable, solutions.
In the next two subsections, we work out the details and show how Theorem~\ref{t:gatmin} and its
Corollary~\ref{c:fdconvgat} can be improved for special loss functions, the least squares functions, similar 
to the results in \cite{MBM}, and the exponential least squares functions, and compare the formulae.

\subsection{The Least Squares Loss Function.}\label{ss:lslf}
If all the loss functions are the least squares, see Example~\ref{ex:lf}.(1), the minimisation function 
\eqref{e:ielef} becomes
\begin{align}\nonumber
\cI(f) & := \frac{1}{l}  \sum_{j=1}^l \|y_j-C_{x_j}f(x_j)\|^2_{\cY_{x_j}} 
+ \gamma_A \|f\|_{\cH_K}^2+ \gamma_I \langle \bef,M\bef\rangle_{\bW^{l+u}}\\
& = \frac{1}{l}  \sum_{j=1}^l \|y_j-C_{x_j}K_{x_j}^*f\|^2_{\cY_{x_j}} 
+ \gamma_A \|f\|_{\cH_K}^2+ \gamma_I \langle \bef,M\bef\rangle_{\bW^{l+u}},\label{e:ieles}
\end{align}
Since, as functions of $f\in\cH_{K}$, all the terms are convex and the middle term is strictly convex, see 
Lemma~\ref{l:semconv}, the minimisation
function $\cI(\cdot)$ is strictly convex and hence the minimisation problem has unique solution. Also, 
$\cI(\cdot)$ is Fr\'echet differentiable, hence Corollary~\ref{c:fdconvgat} is applicable. According to Fermat's 
Theorem, this unique solution $f$ should vanish the gradient. But, for each $h\in\cH_K$, we have
\begin{align*}
\nabla_f\cI(f)h = \frac{2}{l}\langle E_{\bC,\bx}^* E_{\bC,\bx}f,h\rangle_{\cH_K}-\frac{2}{l}\langle 
E_{\bC,\bx}^*\by,h\rangle_{\cH_K} + 2\gamma_A \langle f,h\rangle_{\cH_K}
+2\gamma_I \langle S_{\bx} M S_{\bx} \bef,h\rangle_{\cH_K},
\end{align*}
hence,
\begin{equation}\label{e:ebec}
E_{\bC,\bx}^* E_{\bC,\bx} f +l\gamma_A f+l\gamma_I S_{\bx}^*M S_{\bx}\bef-E_{\bC,\bx}^*\by=0.
\end{equation}
Since $\gamma_A>0$ this is equivalent with
\begin{equation*}
f=\frac{1}{l\gamma_A} E_{\bC,\bx}^* \by -\frac{1}{l\gamma_A} E_{\bC,\bx}^* E_{\bC,\bx}f-\frac{\gamma_I}{\gamma_A}
S_{\bx}^*MS_{\bx}\bef,
\end{equation*}
explicitly,
\begin{equation}
f  = \sum_{i=1}^l K_{x_i}\Bigl(\frac{1}{l\gamma_A}C_{x_i}^* y_i\Bigl) + \sum_{i=1}^l K_{x_i} 
\Bigl(-\frac{1}{l\gamma_A} C_{x_i}^* C_{x_i}f(x_i)\Bigl)  + \sum_{i=1}^{l+u} K_{x_i} \Bigl(-\frac{\gamma_I}{\gamma_A} 
\sum_{k=1}^{l+u} M_{i,k} f(x_k)\Bigr).\label{e:fesumas}
\end{equation}

In this special case, the representation \eqref{e:fesumas} improves
Theorem~\ref{t:gatmin} by obtaining the representation of the optimal solution 
as $f=\sum_{i=1}^{l+u} K_{x_i}a_i$, where, \begin{equation}\label{e:aifa}
a_i=\frac{1}{l\gamma_A} C_{x_i}^* y_i -\frac{1}{l\gamma_A} C_{x_i}^* C_{x_i} f(x_i)-\frac{\gamma_I}{\gamma_A}
\sum_{k=1}^{l+u} M_{i,k} f(x_k),\quad \mbox{ for all }i=1,\ldots,l,
\end{equation}
and, 
\begin{equation}\label{e:aifac}
a_i=-\frac{\gamma_I}{\gamma_A} \sum_{k=1}^{l+u} M_{i,k}f(x_k),\quad\mbox{ for all }i=l+1,\ldots,l+u.
\end{equation}
Then, since for all $k=1,\ldots,l+u$, we have
\begin{equation*}
f(x_k)=\sum_{j=1}^{l+u} K_{x_j}(x_k)a_j=\sum_{j=1}^{l+u}K(x_k,x_j)a_j,
\end{equation*}
and, consequently, from \eqref{e:aifa} and \eqref{e:aifac}, we get
\begin{align}\label{e:aisu}
a_i & + \sum_{j=1}^{l+u} \Bigl[ \frac{1}{l\gamma_A} C_{x_i}^*C_{x_i} K(x_i,x_j)+\frac{\gamma_I}{\gamma_A} \sum_{k=1}^{l+u} M_{i,k} K(x_k,x_j)\Bigl] a_j = \frac{1}{l\gamma_A} C_{x_i}^* y_i,\ i=1,\ldots,l,\\
a_i & + \sum_{j=1}^{l+u} \Bigl[ \frac{\gamma_I}{\gamma_A} \sum_{k=1}^{l+u} M_{i,k}K(x_k,x_j)\Bigr] a_j =0,\ i=l+1,\ldots, l+u.\label{e:aisum}
\end{align}

Equations \eqref{e:aisu} and \eqref{e:aisum} make a system of linear equations which can be treated very 
efficiently by computational techniques, similarly as in \cite{MBM} and the literature cited there. 
{This substantiates our claim that the localised versions offer more flexibility in modelling learning
problems without bringing additional obstructions.}

\subsection{The Exponential Least Squares Loss Function.}\label{ss:elslf}
If all the loss functions are the exponential least squares, see Example~\ref{ex:lf}.(4), the minimisation function 
\eqref{e:ielef} becomes
\begin{align}\nonumber
\cI(f) & := 1-\frac{1}{l}  \sum_{j=1}^l \exp(-\|y_j-C_{x_j}f(x_j)\|^2_{\cY_{x_j}}) 
+ \gamma_A \|f\|_{\cH_K}^2+ \gamma_I \langle \bef,M\bef\rangle_{\bW^{l+u}}\\
& = 1- \frac{1}{l}  \sum_{j=1}^l \exp(-\|y_j-C_{x_j}K_{x_j}^*f\|^2_{\cY_{x_j}}) 
+ \gamma_A \|f\|_{\cH_K}^2+ \gamma_I \langle \bef,M\bef\rangle_{\bW^{l+u}}.\label{e:ielefels}
\end{align}
In this case, since the second term is not, in general, convex, we cannot conclude that the function $\cI$ is 
convex. 
However, $\cI(f)\geq 0$ for all $f\in\cH_K$ and $\lim_{\|f\|\ra \infty}\cI(f)=+\infty$, hence the minimisation problem 
has at least one solution $f$, but this solution might not be unique. Anyhow, because any solution $f$ 
is an interior point in $\cH_K$, Fermat's Rule is applicable, hence $\nabla_f \cI=0$. Taking advantage of 
the calculations performed in the previous subsection, see \eqref{e:ebec}, 
by calculating the gradient of $\cI$ and then, by Fermat's 
Rule, the optimal function $f$ should satisfy the following equation
\begin{equation}\label{e:ebece}
\exp(-\|y_j-C_{x_j}f(x_j)\|^2_{\cY_{x_j}})\bigr(E_{\bC,\bx}^* E_{\bC,\bx} f -E_{\bC,\bx}^*\by\bigl)+l\gamma_A f
+l\gamma_I S_{\bx}^*M S_{\bx}\bef=0.
\end{equation}
Since $\gamma_A>0$ this is equivalent with
\begin{equation*}
f=\frac{\exp(-\|y_j-C_{x_j}f(x_j)\|^2_{\cY_{x_j}})}{l\gamma_A} \bigl(E_{\bC,\bx}^* \by - E_{\bC,\bx}^* E_{\bC,\bx}f\bigr)
-\frac{\gamma_I}{\gamma_A}S_{\bx}^*MS_{\bx}\bef,
\end{equation*}
explicitly,
\begin{align}
f  & = \sum_{i=1}^l K_{x_i}\Bigl(\frac{\exp(-\|y_j-C_{x_j}f(x_j)\|^2_{\cY_{x_j}})}{l\gamma_A}
\bigl(C_{x_i}^* y_i - C_{x_i}^* C_{x_i}f(x_i)\bigl)\Bigl)\nonumber \\
& \phantom{\sum_{i=1}^l K_{x_i}\Bigl(\frac{\exp(-\|y_j-C_{x_j}f(x_j)\|^2_{\cY_{x_j}})}{l\gamma_A}}+ \sum_{i=1}^{l+u} K_{x_i} \Bigl(-\frac{\gamma_I}{\gamma_A} \sum_{k=1}^{l+u} 
M_{i,k} f(x_k)\Bigr).\label{e:fesuma}
\end{align}

In this special case, \eqref{e:fesuma} improves Theorem~\ref{t:gatmin} by obtaining the representation of the optimal solution 
as $f=\sum_{i=1}^{l+u} K_{x_i}a_i$, where, 
\begin{equation}\label{e:aifas}
a_i=\frac{\exp(-\|y_j-C_{x_j}f(x_j)\|^2_{\cY_{x_j}})}{l\gamma_A}
\bigl(C_{x_i}^* y_i - C_{x_i}^* C_{x_i}f(x_i)\bigl),\quad \mbox{ for all }i=1,\ldots,l,
\end{equation}
and, 
\begin{equation}\label{e:aifacs}
a_i= -\frac{\gamma_I}{\gamma_A} \sum_{k=1}^{l+u} M_{i,k}f(x_k),
\quad\mbox{ for all }i=l+1,\ldots,l+u.
\end{equation}
Then, since for all $j=1,\ldots,l+u$, we have
\begin{equation*}
f(x_j)=\sum_{k=1}^{l+u} K_{x_k}(x_j)a_j=\sum_{k=1}^{l+u}K(x_j,x_k)a_k,
\end{equation*}
and, consequently, from \eqref{e:aifas} and \eqref{e:aifacs}, we get
\begin{align}\label{e:aisup}
a_i & + \sum_{j=1}^{l} \Bigl[ \frac{\exp(-\|y_j-C_{x_j}
\sum\limits_{k=1}^{l+u}K(x_j,x_k)a_k\|^2_{\cY_{x_j}})}{l\gamma_A} C_{x_i}^*C_{x_i} K(x_i,x_j)+\frac{\gamma_I}{\gamma_A} 
\sum_{k=1}^{l+u} M_{i,k} K(x_k,x_j)\Bigl]a_j \nonumber \\
& \hspace{5.6cm}  = \frac{1}{l\gamma_A} C_{x_i}^* y_i,\ i=1,\ldots,l,\\
a_i & + \frac{\gamma_I}{\gamma_A}\sum_{j=1}^{l+u}  \sum_{k=1}^{l+u} M_{i,k}K(x_k,x_j) a_j =0,\ i=l+1,\ldots, l+u.\label{e:aisumul}
\end{align}
Equations \eqref{e:aisup} and \eqref{e:aisumul} make a system of equations with respect to the unkowns
$a_i$ for $i=1,\ldots,l+u$, which consists of nonlinear equations for the unknowns $a_i$ 
corresponding to the labeled
input points and of linear equations for the unknowns $a_i$ corresponding to the unlabeled input points.

{
\subsection{Available Numerical Methods for the Exponential Least Square Loss Function.}\label{ss:aels}
In this subsection we tackle the question of deriving algorithms to solve the system of equations defined
by \eqref{e:aisup} and \eqref{e:aisumul}.
We use the same notations and assumptions as in the previous subsection. In addition, we assume that all 
Hilbert spaces $\cW_{x_i}$ have finite dimensions. To be more precise, $\cW_{x_i}$ is identified with 
$\RR^{d_i}$, for $i=1,\ldots,l+u$. We first define the vector
\begin{equation}\label{e:ba}
\ba=(a_1,a_2,\ldots,a_{l+u}) \in\RR^N,
\end{equation}
where $a_j\in \cW_{x_j}=\RR^{d_j}$, for each $j=1,\ldots,l+u$, and 
\begin{equation*}
N=\sum_{i=1}^{l+u} \dim(\cW_{x_i})=\sum_{i=1}^{l+u} d_i.
\end{equation*}
Then we consider the function $H\colon \RR^N\ra\RR^N$ with
\begin{equation*}
H(\ba)=(H_1(\ba),H_2(\ba),\ldots,H_{l+u}(\ba)),\quad \ba\in \RR^N,
\end{equation*}
defined by
\begin{align}
H_i(\ba) & = 
a_i  + \frac{1}{l\gamma_A}\sum_{j=1}^{l}  \exp\bigl(-\|y_j-C_{x_j}
\sum\limits_{k=1}^{l+u}K(x_j,x_k)a_k\|^2_{\cY_{x_j}}\bigr) C_{x_i}^*C_{x_i} K(x_i,x_j)a_j \nonumber\\
& \ \ \ \ +\frac{\gamma_I}{\gamma_A} \sum_{j=1}^{l+u}
\sum_{k=1}^{l+u} M_{i,k} K(x_k,x_j)a_j - \frac{1}{l\gamma_A} C_{x_i}^* y_i,\quad i=1,\ldots,l, \label{e:hiabal}\\
H_i(\ba) & = 
a_i  + \frac{\gamma_I}{\gamma_A}\sum_{j=1}^{l+u}   \sum_{k=1}^{l+u} M_{i,k}K(x_k,x_j) a_j,\quad i=l+1,\ldots, l+u.\label{e:hiabalu}
\end{align}
In view of the system of equations defined by \eqref{e:aisup} and \eqref{e:aisumul} with the unknown 
vector solution $\ba\in\RR^N$ as in \eqref{e:ba}, we search for solutions of the 
equation $H(\ba)=0$. From the numerical analysis point of view, 
this problem can be approached by nonlinear optimisation techniques, more precisely, we search for an 
algorithm that yields a sequence $(\ba_n)_{n\geq 0}$ of vectors in $\RR^N$ with the property that for each 
$\epsilon>0$ there exists an integer $n\geq 0$ such that $\|H(\ba_n)\|<\epsilon$. 
One of the classical approaches 
for this kind of nonlinear problems is in the class of Newton's damped approximation methods, see \cite{Hazely}  
for an overview and advances in second-order approximation methods and machine learning.

{There are different methods and algorithms for dealing with constrained nonlinear systems of differentiable functions. For example, the \emph{potential reduction 
Newton's method} in \cite{MonteiroPang} can be used in order to provide an algorithm to approximate
solutions $\ba\in\RR^N$ of the equation $H(\ba)=0$ under the assumption that the Jacobi matrix 
$\nabla_{\bu} H$ 
is nonsingular for all $\bu\in \RR^N$. Other methods based on the interior point methods are available, for 
example see \cite{Byrd99}, \cite{Byrd00}, and \cite{Waltz}. The latter algorithm is implemented in the 
\texttt{fmincon} function in MATLAB, that we have used in our example. }

{Because the optimisation problem is nonlinear, multiple solutions may show up and this makes 
the choice of the initialisation vector $\ba_0$ very important: for different choices of the initialisation vector 
different pools of solutions of the equations $H(\ba)=0$ might be found. Let us observe that, on the one hand, 
the components $H_i$ for $i=l+1,\ldots,l+u$, corresponding to unlabeled points $x_i$, 
are linear and homogeneous 
and hence that $0$ is a solution. On the other hand, the components $H_i$ for $i=1,\ldots,l$, corresponding to 
labeled points $x_i$, are nonlinear and nonhomogeneous and hence that $0$ is not a solution. From here, we 
can see that, on the one hand, as the 
learning problem is semisupervised, $l$ is significantly less than $u$ and hence taking the initialisation vector 
$\ba_0=0$ might be a good choice for the beginning. On the other hand, in order to find better minimisers, or 
even the global minimiser, some multigrid methods or stochastic methods 
for the choice of the initialisation vector $\ba_0$ might be involved, e.g.\ see \cite{Gower}.
In our example, we have used the \emph{Latin Hypercube Sampling} (LHS), see \cite{LHS_def}, 
to solve this issue. Similarly to multigrid methods, the LHS divides the domain into small cubes. Without loss of 
generality, assume that the domain is $[0,1]^d$ a cube with dimension $d$. Then for a fixed $n$, split the cube 
into $n^d$ subcubes by splitting each interval $[0,1]$ into intervals with $1/n$ length, that is,
\begin{equation*}
    [0,1] = \bigcup_{i=0}^{n-1} \left[\dfrac{i}{n},\dfrac{i+1}{n}\right].
\end{equation*}
Then we pick points such that any rectangle $R_{i,j}$ defined below contains only one point
\begin{equation*}
    R_{i,j} = [0,1]^{j-1}\times\left[\dfrac{i}{n},\dfrac{i+1}{n}\right]\times [0,1]^{d-j},\quad i,j =1,\dots,n.
\end{equation*}
This sampling strategy enjoys asymptotic bounds in expectation \cite{LHS_bound} with few points. Hence it is a 
good candidate for our purposes.}

Because there is no guarantee for uniqueness of solution, the algorithms for approximation of the solutions of the equation $H(\ba)=0$ should be complemented by further 
steps in which the obtained solutions should be tested whether they provide minimisers, of the learning 
function \eqref{e:fes}, or not and to which 
extent from the class of all minimisers one can get a global minimiser. But the most challenging problem refers to 
the assumption that the Jacobi matrix $\nabla_{\bu}H$ is nonsingular for all $\bu\in\RR^N$. In order to tackle this 
question we explicitly calculate this Jacobi matrix. To this end, we first observe that for each $i,j=1,\ldots,l+u$,
letting $a_j=(a_j^{(1)},\dots,a_j^{(d_j)})$,
we have a partial Jacobi matrix
\begin{equation*}
\frac{\partial H_i}{\partial a_j} = \Bigl(\frac{\partial H_i}{\partial a_j^{(1)}},\ldots, \frac{\partial H_i}{\partial 
a_j^{(d_j)}}\Bigr),
\end{equation*}
that is, a function matrix of dimension $d_i\times d_j=\dim(\cW_{x_i})\times \dim(\cW_{x_j})$. 
Let $i=1,\ldots,l$ and let $I_{d_i}$ denote the identity operator on $\cW_{x_i}$ identified with $\RR^{d_i}$. 
Then, from \eqref{e:hiabal}, on the one hand we get
\begin{align}
\frac{\partial H_i}{\partial a_i} & =  I_{d_i} + 
\frac{1}{l\gamma_A} \exp\bigl(-\|y_i-C_{x_j}
\sum\limits_{k=1}^{l+u}K(x_j,x_k)a_k\|^2_{\cY_{x_j}}\bigr) C_{x_i}^*C_{x_i} K(x_i,x_i) \nonumber\\
& + \frac{1}{l\gamma_A}\sum_{j=1}^{l+u}  2\langle y_j-C_{x_j} 
\sum\limits_{k=1}^{l+u}K(x_j,x_k)a_k, C_{x_j}K(x_j,x_i)a_i\rangle_{\cY_{x_i}}\times \nonumber\\
& \ \ \ \ \ \ \ \ \ \ \ \ \ \ \times
\exp\bigl(-\|y_j-C_{x_j}
\sum\limits_{k=1}^{l+u}K(x_j,x_k)a_k\|^2_{\cY_{x_j}}\bigr) C_{x_i}^*C_{x_i} K(x_i,x_j) \nonumber\\
& \ \ \ \ \ +\frac{\gamma_I}{\gamma_A} 
\sum_{k=1}^{l+u} M_{i,k} K(x_k,x_i),\label{e:parhii}
\end{align}
and, on the other hand, for each $j=1,\ldots,l+u$, $j\neq i$, we get
\begin{align}
\frac{\partial H_i}{\partial a_j} & = \frac{1}{l\gamma_A} \exp\bigl(-\|y_j-C_{x_j}
\sum\limits_{k=1}^{l+u}K(x_j,x_k)a_k\|^2_{\cY_{x_j}}\bigr) C_{x_i}^*C_{x_i} K(x_i,x_j) \nonumber \\
& \ \ \ \ + \frac{1}{l\gamma_A}\sum_{m=1}^{l+u}  2\langle y_m-C_{x_m} 
\sum\limits_{k=1}^{l+u}K(x_m,x_k)a_k, C_{x_m}K(x_m,x_j)a_j\rangle_{\cY_{x_m}}\times \nonumber \\
& \ \ \ \ \ \ \ \ \ \ \ \ \ \ \times
\exp\bigl(-\|y_m-C_{x_m}
\sum\limits_{k=1}^{l+u}K(x_m,x_k)a_k\|^2_{\cY_{x_m}}\bigr) C_{x_i}^*C_{x_i} K(x_i,x_m) \nonumber \\
& \ \ \ \ \ +\frac{\gamma_I}{\gamma_A} 
\sum_{k=1}^{l+u} M_{i,k} K(x_k,x_j).\label{e:parhij}
\end{align}
Let now $i=l+1,\ldots,l+u$. Then,  from \eqref{e:hiabalu}, on the one hand we get
\begin{align}
\frac{\partial H_i}{\partial a_i} & = I_{d_i} + \frac{\gamma_I}{\gamma_A} \sum_{k=1}^{l+u} M_{i,k} 
K(x_k,x_i),\label{e:parhiia}
\end{align}
and, on the other hand, for $j=1,\ldots,l+u$, $j\neq i$, we get
\begin{align}
\frac{\partial H_i}{\partial a_j} & =  \frac{\gamma_I}{\gamma_A} \sum_{k=1}^{l+u} M_{i,k} 
K(x_k,x_j).\label{e:parhija}
\end{align}

The partial Jacobi matrices obtained in \eqref{e:parhii} through \eqref{e:parhija}
make a complete description of the Jacobi matrix of the Fr\'echet derivative $\nabla_{\ba}H$. From these 
formulae, a few observations follow. A first observation is that we can write
\begin{equation}\label{e:nabah}
\nabla_{\ba}H = I_{\RR^N} + \frac{1}{\gamma_A}R,
\end{equation}
where $R$ is an $N\times N$ matrix that can be calculated explicitly from \eqref{e:parhii} through 
\eqref{e:parhija}. Then, one can use different extra assumptions on the kernels and 
data points in order to assure that $\nabla_{\ba}H$ is a nonsingular matrix. For example, one can take into 
account the fact that the labeled points 
$x_1,\ldots,x_l$ are selected in a supervised manner while the unlabeled points $x_{l+1},\ldots,x_{l+u}$ can be 
changed in a convenient manner that assures the Jacobi matrix $\nabla_{\ba}H$ be nonsingular. Another 
observation is that in a semisupervised learning problem the number $l$ of labeled points is significantly less 
than the number $u$ of unlabeled points and hence the degree of nonlinearity of the system of equations 
defined by \eqref{e:aisup} and \eqref{e:aisumul} is 
rather low, which can be used to search for reliable approximations  by systems of linear equations.

Some of the 
constrained optimisation problem described before depends heavily on the assumption that the Jacobi
matrix $\nabla_{\ba}H$ is nonsingular for all $\ba\in\RR^N$, a situation that may not be easy to get. In general, 
we can find a nonempty open set $\Omega$ in $\RR^N$ on which the Jacobi matrix $\nabla_{\ba}H$ is 
nonsingular, for example we can use \eqref{e:nabah} to show that $\Omega$ contains 
the set of those $\ba\in\RR^N$ with the 
property that $\|R\|<\gamma_A$. In view of  \eqref{e:parhii} through \eqref{e:parhija}, 
in the nonlinear terms of these partial Jacobi matrices the exponentials with negative exponents
tame the growth of $\|R\|$ when the vector $\ba$ is far from the solution and, consequently, 
by manipulating the regularisation coefficient $\gamma_A$ we can get very large sets $\Omega$.
Then, one can use other techniques to prevent the approximation sequence to get too close to 
the boundary of $\Omega$ as in  the constrained version of the algorithm proposed in 
\cite{MonteiroPang}. More recent investigations 
refer to either adapting the algorithm in case the Jacobi matrix $\nabla _{\bu}H$ is singular and pseudo-inverses 
replace the inverses, see e.g.\ the analyis of Kaczmarz type algorithms for ill-posed linear problems in 
\cite{Popa},  or using stochastic Bregman-Kaczmarz methods, see \cite{Gower}. These allow us to modify 
correspondingly the algorithm that we presented here to the general case. We leave the details for a further 
research project on real data sets. 
{As a practical approach, the simplest is to use perturbation theory, more 
precisely, for those values of $\ba$ at which the Fr\'echet derivative is not invertible, a small perturbation of $\ba$ 
changes the point to one for which the Fr\'echet derivative is invertible and then 
the stability of the problem to small 
perturbations guarantees the convergence of the iteration process.}

{
\subsection{A Toy Model.} In this subsection we provide a toy model for the localised version of the regularised 
machine learning  problem in case the loss function is the exponential least square function as in 
Subsection~\ref{ss:elslf} and test an algorithm following the discussion of the numerical methods as in Subsection~\ref{ss:aels}. To this end, let 
$X=X_1\cup X_2$, where
\begin{align}
X_1 & := \{(\alpha_1,\alpha_2)\mid 0.25\leq \alpha_1\leq 1, \ 0.25\leq \alpha_2\leq 1 \},\\
X_2 & := \{(\alpha_1,\alpha_2)\mid -1\leq \alpha_1\leq -0.25,\ 0.25\leq \alpha_2\leq 1\}.
\end{align}
In the following we use the notation as in Subsection~\ref{ss:grmlp}.
We consider $x_1\in X_1$ and $x_2\in X_2$ randomly selected and let the labels $y_1\in \cY_{x_1}=\RR$ and
$y_2\in \cY_{x_2}=\RR^2$ be randomly selected. Also, let $x_3,x_4\in X_1\setminus\{x_1\}$, $x_3\neq x_4$, 
and $x_5,x_6\in X_2\setminus\{x_2\}$, $x_5\neq x_6$, randomly 
selected as well, be unlabeled points. In particular, $l=2$ and $u=4$. Let $\cY_{x_3}=\cY_{x_4}=\RR$ and 
$\cY_{x_5}=\cY_{x_6}=\RR^2$. Then we take $\cW_{x_j}=\cY_{x_j}$ for all $j=1,\ldots,6$, in particular the 
machine learning problem is single viewed. With respect to the notation \eqref{e:welu} we have
\begin{align*}
\bW^{l+u} & = \cY_{x_1}\oplus  \cY_{x_2}\oplus  \cY_{x_3}\oplus  \cY_{x_4}\oplus  \cY_{x_5}\oplus  \cY_{x_6}\\
& = \RR\oplus \RR^2\oplus \RR\oplus \RR\oplus \RR^2\oplus \RR^2 = \RR^9,
\end{align*}
hence $\dim(\bW^{l+u})=9$.

We let the regularisation coefficients $\gamma_A$ and $\gamma_I$ unspecified and we will test different 
choices later. Since the problem is single-view we let 
$C_{x_i}=I_{\cY_{x_i}}$ for all $i=1,\ldots,6$ and $M_B=0$, see Example~\ref{ex:bevreg}. For the within-view
operator, see Example~\ref{ex:wivreg}, we proceed as follows. Let
\begin{equation}
w_{j,k} = \exp\bigl( - \frac{\|x_j-x_k\|^2}{2\sigma^2}\bigr),\quad j,k=1,\ldots,6,
\end{equation}
and then, consider the $6\times 6$ matrices $W=[w_{j,k}]_{j,k=1}^6$ and $V=\diag(v_{1,1},\ldots,v_{6,6})$, 
where
\begin{equation}
v_{j,j}= \sum_{k=1}^6 w_{j,k},\quad j=1,\ldots,6,
\end{equation}
and the Laplace matrix $L=V-W=[l_{j,k}]_{j,k=1}^6$. Taking into account that the labels of 
points in $X_1$ have dimension
$1$ and the labels of points in $X_2$ have dimension $2$ the matrix $M_W=M$ looks like this, 
see Example~\ref{ex:wivreg}.
\begin{equation}
M= \begin{bmatrix} l_{1,1} & l_{1,2} & 0 & l_{1,3} & l_{1,4} & l_{1,5} & 0 & l_{1,6} & 0 \\
l_{2,1} & l_{2,2} & 0 & l_{2,3} & l_{2,4} & l_{2,5} & 0 &  l_{2,6} & 0 \\
0 & 0 & l_{2,2} & 0 & 0 & 0 & 0 & 0 & 0 \\
l_{3,1} & l_{3,2} & 0 & l_{3,3} & l_{3,4} & l_{3,5} & 0 & l_{3,6} & 0 \\
l_{4,1} & l_{4,2} & 0 & l_{4,3} & l_{4,4} & l_{4,5}  & 0 & l_{4,6} & 0 \\
l_{5,1} & l_{5,2} & 0 & l_{5,3} & l_{5,4} & l_{5,5} & 0 & l_{5,6} & 0 \\
0 & 0 & 0 & 0 & 0 & 0 & l_{5,5} & 0 & 0 \\
l_{6,1} & l_{6,2} & 0 & l_{6,3} & l_{6,4} & l_{6,5} & 0 & l_{6,6} & 0 \\
0 & 0 & 0 & 0 & 0 & 0 & 0 & 0 & l_{6,6}
\end{bmatrix}.\label{e:matem}
\end{equation}
With notation as in Section~\ref{ss:grmlp} we have $M=[M_{j,k}]_{j,k=1}^6$ with 
\begin{equation*}
M_{1,1}=l_{1,1},\  M_{1,2}=\begin{bmatrix} l_{1,2} & 0\end{bmatrix},\ M_{1,3}=l_{1,3},\ M_{1,4}=l_{1,4},\ 
M_{1,5}=\begin{bmatrix} l_{1,5} & 0\end{bmatrix},\ M_{1,6}=\begin{bmatrix} l_{1,6} & 0 \end{bmatrix}, 
\end{equation*}
\begin{equation*}
M_{2,1} =\begin{bmatrix} l_{2,1} \\ 0 \end{bmatrix},\ M_{2,2} =\begin{bmatrix} l_{2,2} & 0 \\ 0 & l_{2,2} 
\end{bmatrix},\ M_{2,3}= \begin{bmatrix} l_{2,3} \\ 0 \end{bmatrix},\ M_{2,4} = \begin{bmatrix} l_{2,4} \\ 
0\end{bmatrix},\ M_{2,5} = \begin{bmatrix} l_{2,5} & 0 \\ 0 & 0\end{bmatrix},\ M_{2,6} = \begin{bmatrix}
l_{2,6} & 0 \\ 0 & 0 \end{bmatrix},
\end{equation*}
\begin{equation*}
M_{3,1}= l_{3,1}, \ M_{3,2} = \begin{bmatrix} l_{3,2} & 0 \end{bmatrix},\ M_{3,3}=l_{3,3},\ M_{3,4}=l_{3,4}, \
M_{3,5}= \begin{bmatrix} l_{3,5} & 0 \end{bmatrix},\ M_{3,6}= \begin{bmatrix} l_{3,6} & 0 \end{bmatrix},
\end{equation*}
\begin{equation*}
M_{4,1}= l_{4,1},\ M_{4,2} = \begin{bmatrix} l_{4,2} & 0 \end{bmatrix},\ M_{4,3}=l_{4,3},\ M_{4,4}=l_{4,4},\
M_{4,5}=\begin{bmatrix} l_{4,5} & 0 \end{bmatrix},\ M_{4,6}=\begin{bmatrix} l_{4,6} & 0\end{bmatrix},
\end{equation*}
\begin{equation*}
M_{5,1}= \begin{bmatrix} l_{5,1} \\ 0 \end{bmatrix},\ M_{5,2}=\begin{bmatrix} l_{5,2} & 0 \\ 0 & 0 \end{bmatrix},\
M_{5,3}=\begin{bmatrix} l_{5,3} \\ 0 \end{bmatrix},\ M_{5,4} =\begin{bmatrix} l_{5,4} \\ 0 \end{bmatrix} ,\  
M_{5,5}=\begin{bmatrix} l_{5,5} & 0 \\ 0 & l_{5,5}\end{bmatrix},\  M_{5,6}=\begin{bmatrix} l_{5,6} & 0 \\ 0 & 0 \end{bmatrix} , 
\end{equation*}
\begin{equation*}
M_{6,1}= \begin{bmatrix}l_{6,1} \\ 0 \end{bmatrix},\ M_{6,2}=\begin{bmatrix} l_{6,2} & 0 \\ 0 & 0\end{bmatrix},\
M_{6,3}=\begin{bmatrix} l_{6,3} \\ 0 \end{bmatrix} ,\
M_{6,4}=\begin{bmatrix} l_{6,4} \\ 0 \end{bmatrix} ,\
M_{6,5}=\begin{bmatrix} l_{6,5} & 0 \\ 0 & 0 \end{bmatrix} ,\
M_{6,6}= \begin{bmatrix} l_{6,6} & 0 \\ 0 & l_{6,6}\end{bmatrix}.
\end{equation*}

We consider the kernel $K\colon X\times X\ra \bigcup_{z,\zeta\in X} \cB(\cW_{\zeta},\cW_z)$ defined as follows.
\begin{align}
K(z,\zeta)& := \exp\bigl(-\frac{\|z-\zeta\|^2}{\sigma^2}\bigr),\quad z,\zeta\in X_1,\label{e:k11}\\
K(z,\zeta) & := \begin{bmatrix} \exp\bigl(-\frac{\|z-\zeta\|^2}{\sigma^2}\bigr) & 0 \\ 0 & \exp(-\alpha\|z-\zeta\|)
\end{bmatrix},\quad z,\zeta\in X_2,\label{e:k22}\\
K(z,\zeta) & :=  \begin{bmatrix} \exp\bigl(-\frac{\|z-\zeta\|^2}{\sigma^2}\bigr) & 0 \end{bmatrix},\quad z\in X_1,\ \zeta\in X_2,\label{e:k12}\\
K(z,\zeta) & := \begin{bmatrix} \exp\bigl(-\frac{\|z-\zeta\|^2}{\sigma^2}\bigr) \\ 0 \end{bmatrix},\quad z\in X_2,\ \zeta\in X_1. \label{e:k21}
\end{align}
Here the coefficients $\sigma$ and $\alpha$ remain unspecified for the moment.

With these data we have the system of equations \eqref{e:aisup} and \eqref{e:aisumul}, where $a_1$, $a_3$, 
and $a_4$ are scalars and $a_2$, $a_5$, and $a_6$ are $2$-vectors. So, speaking in terms of scalars, we have 
a system of nine equations with nine unknowns. Three of these equations are nonlinear and the rest of six 
equations are linear.

In order to find a bounded set on which we can guarantee the existence of the global solution of the 
minimisation problem \eqref{e:fezga}, we follow the idea of the proof of 
Theorem~\ref{t:fdcont}. With notation as in that theorem, we search for the minimiser
\begin{equation}\label{e:fes}
f=\sum_{j=1}^6 K_{x_j} a_j
\end{equation}
and we let $f_0=0$ and hence the corresponding vector $\ba=0$. Then, by \eqref{e:ielefels} we have
\begin{equation}\label{e:cefez}
\cI(f_0)= 1-\frac{1}{2}\bigl(\exp(-y_1^2)+\exp(-\|y_2\|^2)\bigr).
\end{equation}
We search for $\delta>0$ such that $\cI(f)\geq \cI(f_0)$, with $f$ as in \eqref{e:fes}, for any vector  
$\ba=(a_1,a_2,a_3,a_4,a_5,a_6)$ of dimension $9$ with the property that $\ba \not\in [-\delta,\delta]^9$.
By the proof of Theorem~\ref{t:fdcont},
the minimisation 
problem $\argmin\|H(\ba)\|$ has the solution in the cube $[-\delta,\delta]^9$. Taking into account that
\begin{equation}\label{e:cefeg}
\cI(f)\geq \gamma_A \|f\|_{\cH_K}\geq\gamma_A \langle K_{\bx}\ba,\ba\rangle \geq \gamma_A 
\lambda_{K_{\bx}} \|\ba\|_2^2\geq \gamma_A 
\lambda_{K_{\bx}} \|\ba\|_\infty^2,
\end{equation}
where, 
\begin{equation}K_{\bx}=[K(x_i,x_j)]_{i,j=1}^6,\label{e:kex}
\end{equation} 
is a $9\times 9$ matrix and 
$\lambda_{K_{\bx}}>0$ is the least eigenvalue of the positive matrix $K_{\bx}$. 
From \eqref{e:cefez} and
\eqref{e:cefeg}, letting
\begin{equation}\label{e:delta}
\delta=\frac{1}{\sqrt{\gamma_A \lambda_{K_{\bx}}}}
\sqrt{1-\frac{1}{2}\bigl(\exp(-y_1^2)+\exp(-\|y_2\|^2)\bigr)},
\end{equation}
it follows that in order to find the global minimiser for the problem \eqref{e:fezga}, it is sufficient to 
search for the solutions $\ba$ of
the minimisation problem $\argmin\|H(\ba)\|$ in the cube $[-\delta,\delta]^9$. 

There are two conditions to be verified, in order for $\delta$ to be consistent. Firstly, 
we work under the assumption that $\lambda_{K_{\bx}}>0$ which can 
be numerically checked, hence $\delta<\infty$. 
Secondly, note that if $\delta=0$ this means that $y_1=0$ and $y_2=0$, hence
$f=0$ is the solution for the global minimiser and hence, in this case, the problem is trivial. So, we work under 
the hypothesis that $\delta>0$. 

Since the optimisation problem is sensitive to the choice of initial conditions, we explored various possibilities. 
Specifically, we found that generating a \emph{Latin Hypercube Sampling} (LHS), see \cite{LHS_def} 
and the previous section, 
within the cube $[-\delta,\delta]^9$ provides a reliable estimate of the global solution while maintaining a 
reasonable runtime. Algorithm \ref{alg: num_imp} below summarises the numerical implementation.

\begin{remark}\label{r:adm}
 {In this algorithm, our admissibility criterion at line $7$ is $3$-fold. 
We check whether $\mathbf{a}_0$ results in both a 
smaller loss $\mathcal{I}(f_{\mathbf{a}_0})$, and a smaller gradient norm $\|\mathbf{H}(\mathbf{a}_0)\|$ 
compared to $\mathbf{a}$. Additionally, we verify that the first-order optimality condition is sufficiently small to 
ensure that $\mathbf{a}_0$ corresponds to a local extremum. Among all local optima we select the global one by 
a careful investigation and using the Latin Hypercube Sampling implemented in the algorithm and the code run 
on MATLAB. An essential part of the algorithm is the use of the function \texttt{fmincon} of MATLAB that uses the 
method of interior points for constrained optimisation, see \cite{Byrd99}, \cite{Byrd00}, and \cite{Waltz}.}
\end{remark}
\vfill\eject

\begin{algorithm}
\caption{Numerical Implementation}\label{alg: num_imp}
\begin{algorithmic}[1]
\REQUIRE $\mathbf{x}_i\in X_1 \cup X_2 \subset \mathbb{R}^2$, $\sigma,\alpha >0$, $\gamma_I,\gamma_A > 0$.
\STATE $D \gets \{\mathbf{x}_1, \mathbf{x}_2, \dots, \mathbf{x}_6\}$
\STATE Construct $M$ in \eqref{e:matem} and $K$ in \eqref{e:kex} using data $D$
\STATE Compute $\delta$ in \eqref{e:delta}
\STATE Compute the LHS $L$ on the cube $[-\delta, \delta]^9$
\FOR{each element $\mathbf{a}_0\in L$}
    \STATE Solve the minimisation problem \eqref{e:hiabal} and \eqref{e:hiabalu} on the cube $[-\delta, \delta]^9$ \\  with initial condition $\mathbf{a}_0$ using the function \texttt{fmincon}.
    \IF{$\mathbf{a}_0$ is admissible in the sense of Remark~\ref{r:adm}}
        \STATE $\mathbf{a}\gets \mathbf{a}_0$
    \ENDIF
\ENDFOR
\RETURN $\mathbf{a}$
\end{algorithmic}
\end{algorithm}

\begin{example}  {In this example we used $\gamma_A = 0.25$, $\gamma_I = 10$, $\sigma = 0.1$ 
and $\alpha = 10$, randomly generated the data $x$ and $y$ and returned the result $\ba$. }

\begin{align}
x & =
\begin{bmatrix}    0.5377  &  0.6342 &   0.3273 &   0.3472 &   0.6724 &   0.8174 \\
    0.3978 &  -0.4584 &   0.3923  &  0.4305  & -0.7962 &  -0.3601
\end{bmatrix} \ \ 
y =    (1.2108,    1.6636,    4.3843)\nonumber \\
\ba & =    (0.8433,    1.7226,    1.5475,    0.4395,    0.3944,    0.1926,   -0.0055,    1.4116,   -0.1589) \label{e:optima}
\end{align}

 {The implicit optimality tolerance  is $\epsilon=10^{-6}$. 
Each time the code produced the mesh of the solution $f_{best}$ given by \eqref{e:fes} for the corresponding 
coefficients given by the coordinates of the solution $\ba$. Below are the meshes of the optimal solution as in 
\eqref{e:optima}, the first one corresponds to the set $X_1$ where the function $f_{best}$ 
is scalar valued while 
the second one corresponds to the set $X_2$ where the function $f_{best}$ 
has $2$ dimensional vector values and 
there are two meshes, one for each component. The circles are the labeled points.}

\includegraphics[width=12cm, trim=0 245 0 235, clip]{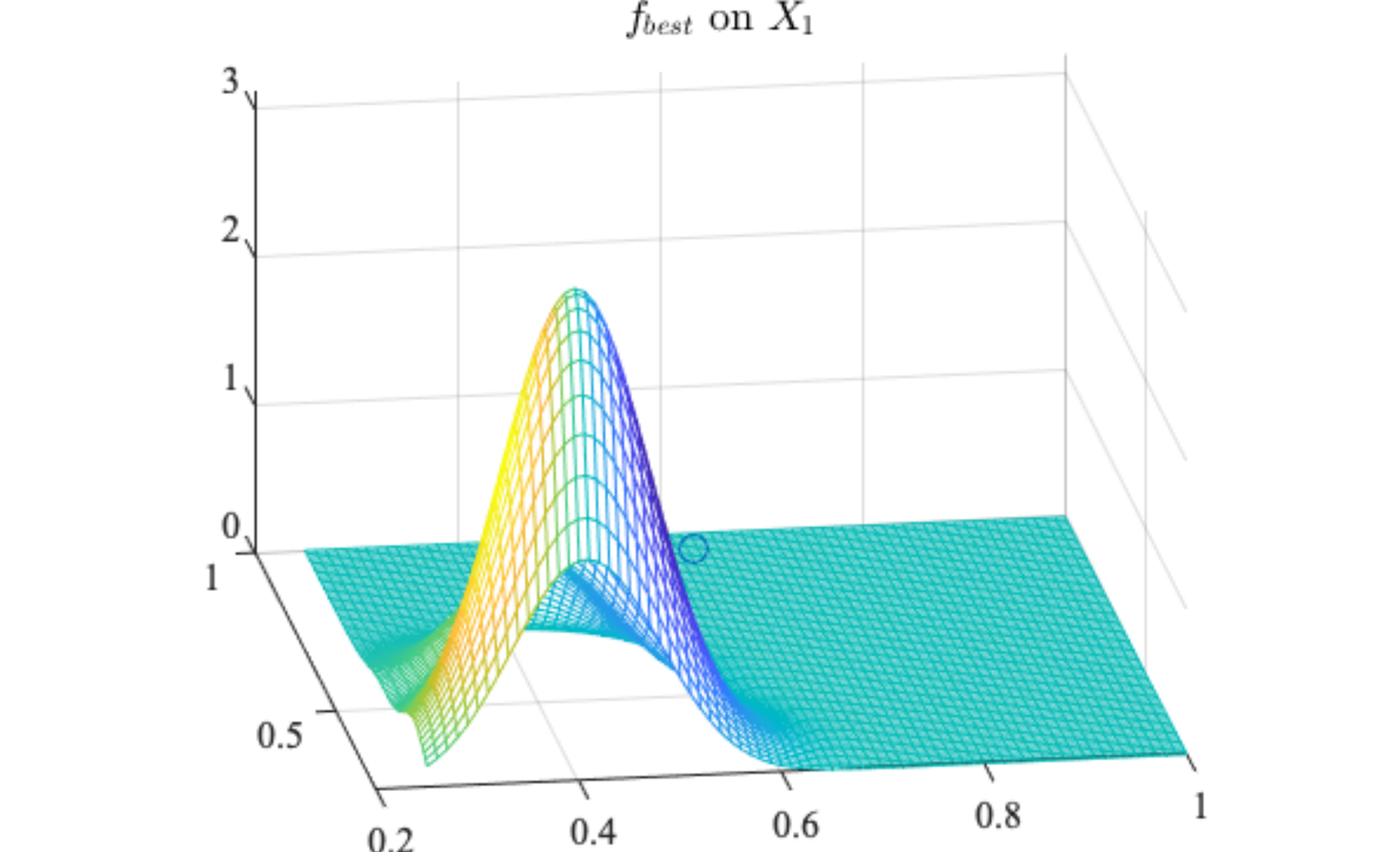}

\includegraphics[width=14cm, trim=0 235 0 235, clip]{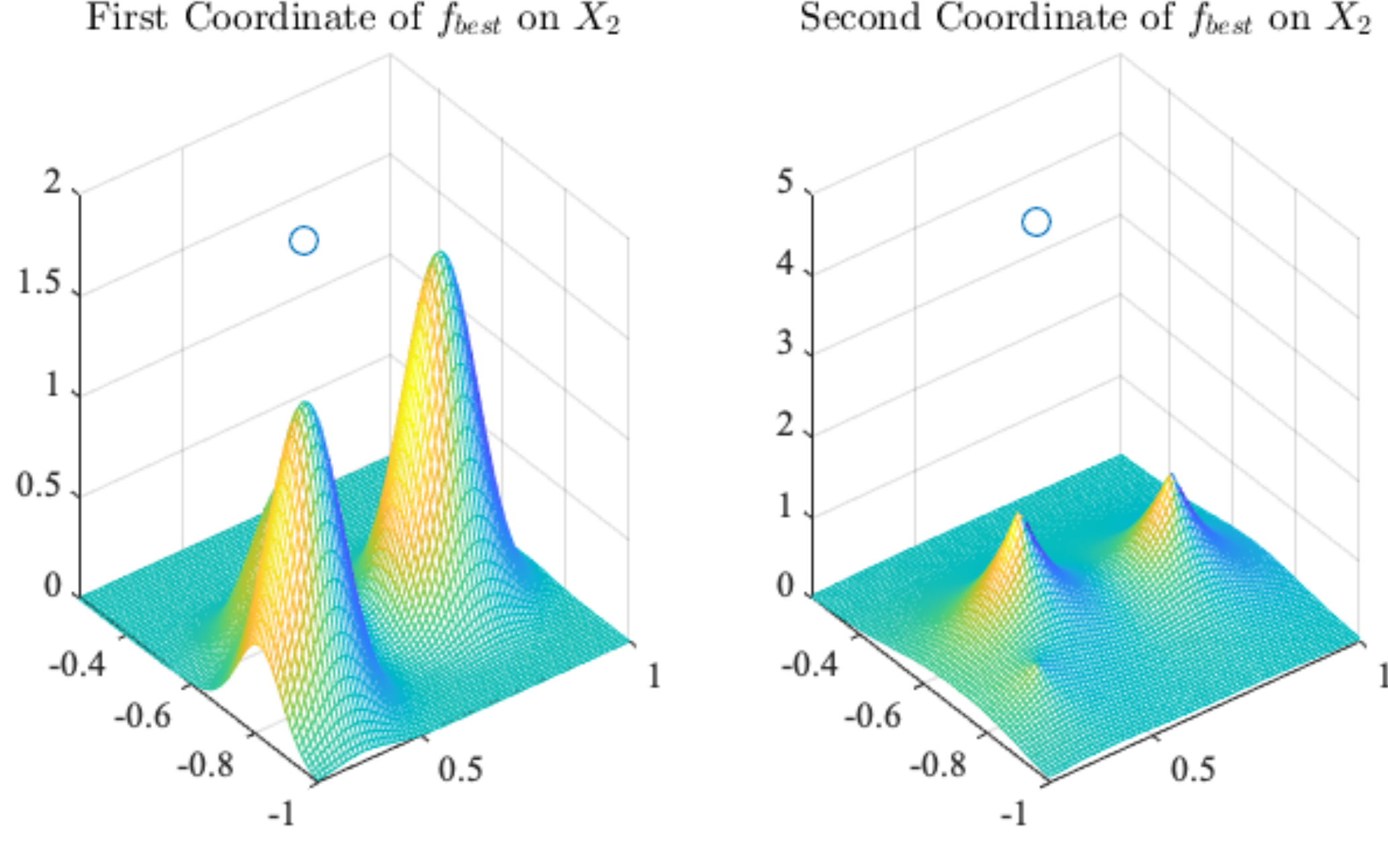}

 {We observe that the choice of the parameters avoids overfitting. Actually, by varying 
the coefficients $\gamma_A$, $\gamma_I$, and the others, one can obtain different degrees of overfitting. 
The code runs efficiently on a Mac laptop, for this example it takes only less than a minute, but for other 
combinations of coefficients it may be five or more minutes.}

 {We also plot the values of the learning function $\cI(f_{best})$ for each choice of the initial point}

\includegraphics[width=12cm, trim=0 235 0 235,clip]{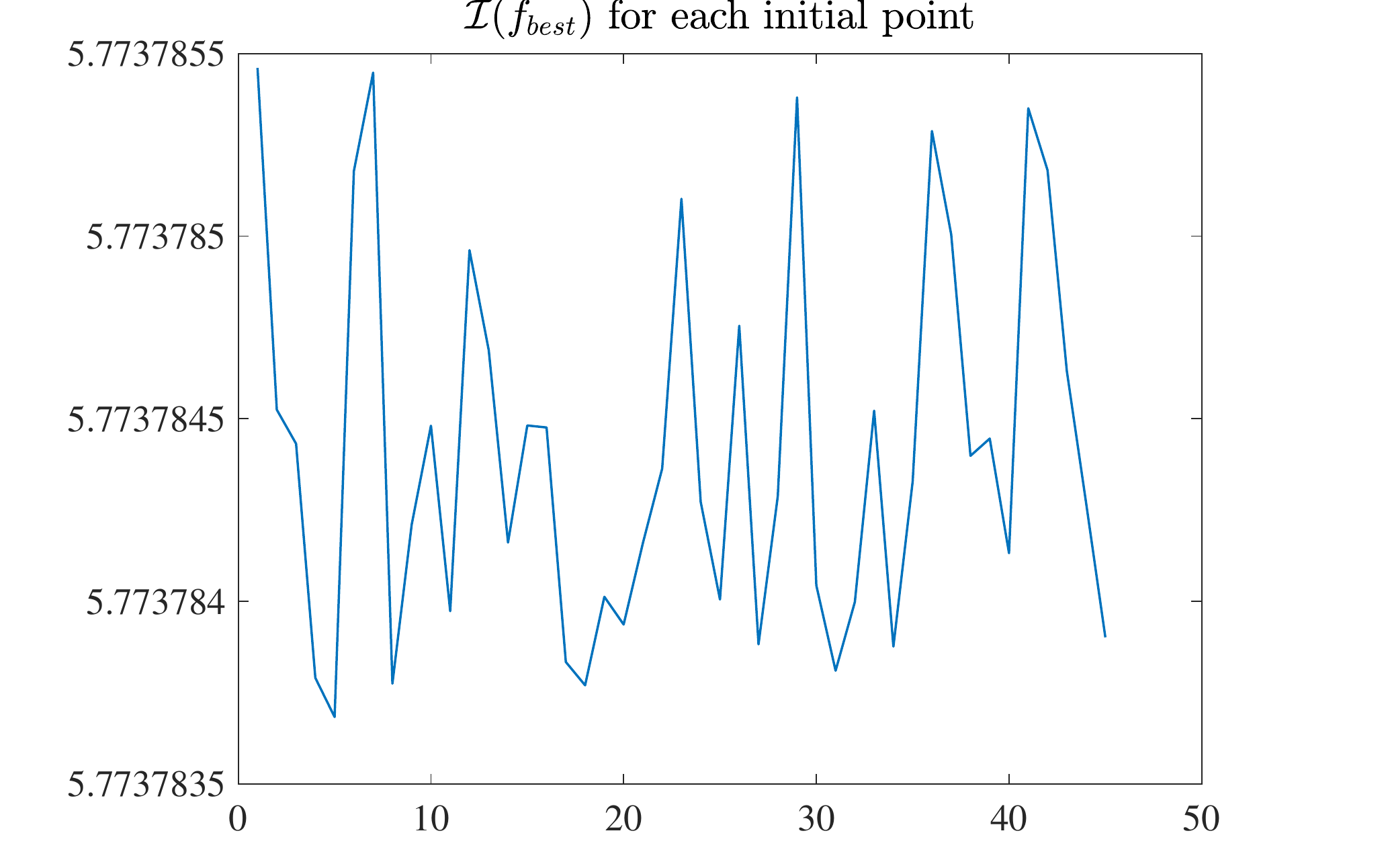}

\noindent  {and the values of $H(\ba)$, in order to empirically show that the optimal solution $\ba$ is a good 
approximation of the solution of the equation $H(\ba)=0$,  for each choice of the initial point,}

\includegraphics[width=12cm, trim=0 235 0 235,clip]{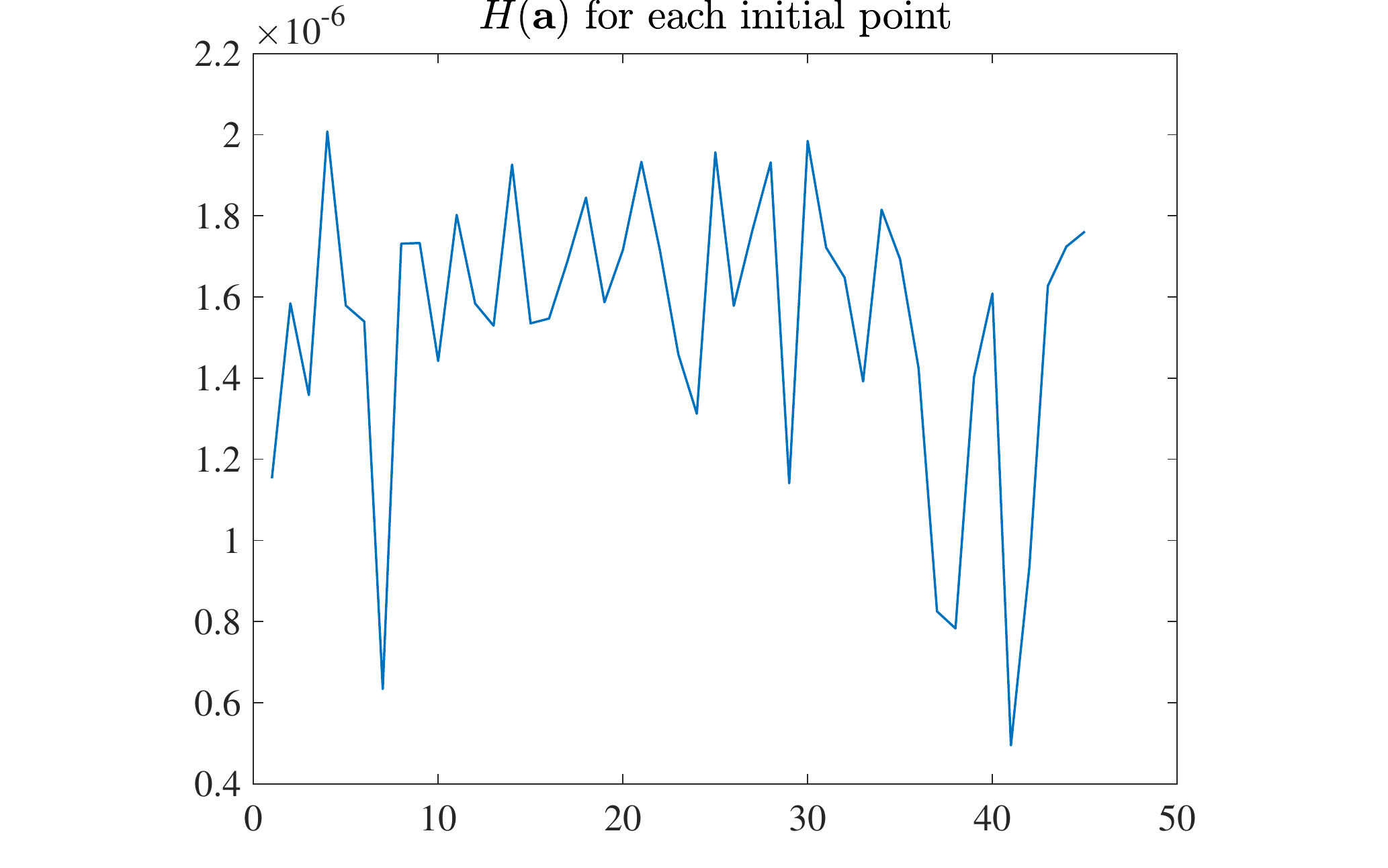}

 {In the last 
figure we empirically check that the solution $f_{best}$, corresponding to the optimal $\ba$ as in \eqref{e:fes}, 
is a good approximation of the minimiser of the learning function $\cI$, given at \eqref{e:ielefels}, for example 
by meshing it when we keep the last seven entries fixed as the last seven entries of the optimal $\ba$ 
and let each of 
the first two entries of $\ba$ vary in the interval $[-\delta,\delta]$. The point represents the value of the 
learning function at $f_{best}$.}

\includegraphics[width=12cm, trim=0 242 0 220,clip]{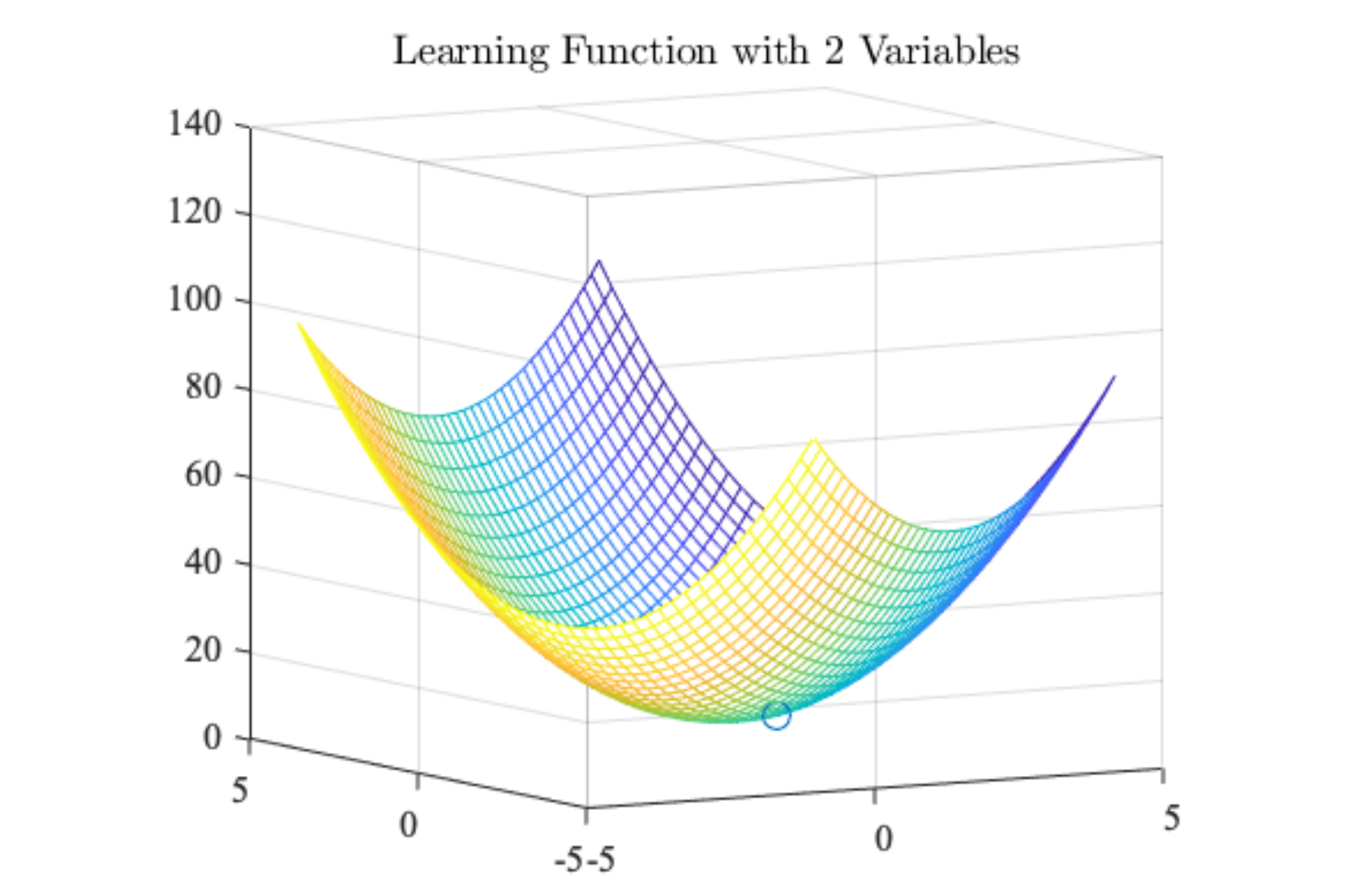}

 {Although the picture shows a convex surface this is misleading since it represents only a 
$2$-dimensional section of the general $9$-dimensional surface generated by the learning function $\cI(f)$, 
when $f$ is parametrised in terms of the $9$-dimensional vector $\ba$ as in \eqref{e:fes}. This behaviour of the 
learning map, as well as other traits, may vary for different choices of the coefficients $\gamma_A$, 
$\gamma_I$, $\sigma$, and $\alpha$, as we performed test for different combinations.}

 {In this example, one can observe that in the region that is trusted to contain 
the global minimum we actually 
have uniqueness of the solutions, both for the equation $H(\ba)=0$ and the minimiser of the learning map 
$\cI$. But, due to the nonlinearity of the system $H(\ba)=0$ and the nonconvexity of the learning map $\cI$, this 
does not mean that there may be no other solutions when the search is performed in a larger region, that is, 
local minima. This shows the importance of Theorem~\ref{t:fdcont} that provides a bounded region where the 
solutions that are of interest live.}

\end{example}
}



\appendix

\section{Proof of Theorem~\ref{t:kolmogorov}}
In the following we use a formalisation of the quotient completion to a Hilbert space of an $\FF$-vector space 
$\cV$ with respect to a given nonnegative sesquilinear form $\cV\times\cV\ni (u,v)\mapsto q(u,v)\in\FF$, as 
follows. A pair $(\cH;\Pi)$ is called a \emph{Hilbert space induced} by $(\cV;q)$ if:
\begin{itemize}
\item[(ihs1)] $\cH$ is a Hilbert space.
\item[(ihs2)] $\Pi\colon \cV\ra \cH$ is a linear operator with dense range.
\item[(ihs3)] $q(u,v)=\langle \Pi u,\Pi v\rangle_\cH$, for all $u,v\in\cV$.
\end{itemize} 
Such an induced Hilbert space always exists and is unique, up to a unitary operator. More precisely, we
will use the following construction. Consider the vector subspace of $\cV$ defined by 
\begin{equation}\label{e:nequ}
\cN_q:=\{u\in\cV\mid q(u,u)=0\}=\{u,\in\cV\mid q(u,v)=0\mbox{ for all }v\in\cV\},
\end{equation}
where the equality holds due to the Schwarz Inequality for $q$,
and then consider the quotient vector space $\cV/\cN_q$. Letting
\begin{equation}
\widetilde q(u+\cN_q,v+\cN_q):= q(u,v),\quad u,v\in \cV,
\end{equation}
we have a pre-Hilbert space $(\cV/\cN_q;\widetilde q)$ that can be completed to a Hilbert space 
$(\cH_q;\langle\cdot,\cdot\rangle_\cH)$. Letting $\Pi_q\colon \cV\ra \cH_q$ be defined by
\begin{equation}
\Pi_q u:=u+\cN_q\in \cV/\cN_q \subseteq \cH_q,\quad u\in \cV,
\end{equation} 
it is easy to see that $(\cH_q,\Pi_q)$ is a Hilbert space induced by $(\cV;q)$.\medskip

(a)$\Ra$(b). Assuming that the $\bH$-operator valued kernel $K$ is
positive semidefinite, we consider the vector space $\cF_0(X;\bH)$ of vector cross-sections
with finite support and the Hermitian sesquilinear form 
$\langle \cdot,\cdot\rangle_K$ defined as in \eqref{e:ipk}. We consider 
\begin{align}\label{e:cenek}
\cN_K & =\{f\in\cF_0(X;\bH)\mid \langle f,f\rangle_K=0\} \\
& =\{f\in\cF_0(X;\bH)\mid \langle f,g\rangle_K=0
\mbox{ for all }g\in F_0(X;\bH)\},\nonumber \end{align} then
consider the induced Hilbert space $(\cH_K;\Pi_K)$ associated to $(\cF_0(X;\bH);\langle\cdot,\cdot\rangle_K)$,
and let $\cK:=\cH_K$.
For each $x\in X$ let $V(x)\colon \cH_x\ra\cK$ be the operator defined by
\begin{equation}\label{e:vexeh}
V(x)h=\Pi_{K}(\widehat h)= h+\cN\in\cK,\end{equation}
with notation as in \eqref{e:deltax}. Since
\begin{equation} \langle V(x) h,V(x)h\rangle_K=
\langle K(x,x)h,h\rangle_{\cH_x}\leq \|K(x,x)\| \|h\|_{\cH_x}^2,
\quad h\in\cH_x,\ x\in X,
\end{equation} it follows that $V(x)$ is bounded for all $x\in X$. 
Note that, in this way, $\cK$ is the closed span of $\{V(x)\cH_x\mid x\in X\}$.
On the other hand,
\begin{equation} \langle K(y,x)h,g\rangle_{\cH_y}=\langle  h+\cN,g+\cN
\rangle_K=\langle V(x)h,V(y)g\rangle_K,
\quad h\in\cH_x,\ g\in\cH_y,\ x,y\in X,
\end{equation} hence, 
$K(y,x)=V(y)^*V(x)$ for all $x,y\in X$. 
We thus proved that $(\cK;V)$ is a minimal Hilbert space linearisation of the 
$\bH$-kernel $K$.

In the following we prove that $(\cK;V)$ is unique, modulo unitary equivalence. 
To see this, let $(\cK';V')$ be another minimal linearisation of $K$.
Then, for arbitrary $f\in\cF_0(X;\bH)$ we have
\begin{align*} \langle \sum_{x\in X} V(x) f_x,\sum_{y\in X}V(y) f_y\rangle_0 
& = \sum_{x,y\in X} \langle V(y)^*V(x) f_x,f_y\rangle_0 \\
& = \sum_{x,y\in X} \langle K(y,x) f_x,f_y\rangle_0 \\
& = \langle f,f\rangle_K \\
& = \langle \sum_{x\in X} V'(x) f_x,\sum_{y\in X}V'(y) f_y\rangle_0, 
\end{align*} hence, defining $U(\sum_{x\in X} V'(x) f_x)=\sum_{x\in X} V(x) f_x$, 
for arbitrary $f\in\cF_0(X;\bH)$, it follows that $U$ is isometric and, taking into 
account of the minimality conditions, it follows that $U$ can be uniquely extended 
to a unitary operator $U\colon \cK'\ra\cK$, such that $UV'(x)=V(x)$ for all $x\in X$.

(b)$\Ra$(a). Assuming that $(\cK;V)$ is a Hilbert space linearisation of $K$, we have
\begin{align*} \sum_{x,y\in X} \langle K(y,x)f_x,f_y\rangle_{\cH_y} 
& = \sum_{x,y\in X} \langle V(y)^*V(x)f_x,f_y\rangle_{\cH_y} \\
& = \|\sum_{x\in X} V(x) f_x\|^2_{\cK},\quad f\in\cF_0(X;\bH),
\end{align*} hence $K$ is positive semidefinite.

\section{Proof of Theorem~\ref{t:rkh}}
(a)$\Ra$(b). If $K$ is positive definite then, by 
Theorem~\ref{t:kolmogorov}, there exists a minimal linearisation 
$(\cK;V)$ of $K$. Define $\cR=\{V(\cdot)^* f\mid f\in\cK\}$, that is, $\cR$ consists of
all functions $X\ni x\mapsto V(x)^*f\in\cH_x$, with $f\in\cK$, in particular, 
$V(\cdot)^* f$ can be viewed as an $\bH$-vector bundle, that is, 
$V(\cdot)^*f\in\cF(X;\bH)$ for all $f\in\cK$. Thus, we can view $\cR$ as a linear 
subspace of $\cF(X;\bH)$, with all its algebraic operations.

We now show that the mapping
\begin{equation}\label{e:uf} \cK\ni f\mapsto Uf=V(\cdot)^*f\in\cR
\end{equation} is bijective. By definition, this mapping is surjective, hence it 
remains to prove that it is injective. To see this,  let $f,g\in \cK$ be such that 
$V(\cdot)^*f=V(\cdot)^* g$. Then for arbitrary $x\in X$ and $h\in\cH_x$ we have 
$\langle V(x)^*f,h\rangle_{\cH_x}=\langle V(x)^*g,h\rangle_{\cH_x}$, equivalently, 
$\langle f-g,V(x(h\rangle_\cK=0$. Taking into account the minimality of 
the linearisation, it follows that $f=g$. Thus, $U$ is bijective.

It is obvious  that the bijective mapping $U$ as in \eqref{e:uf} is linear. On $\cR$
we introduce an inner product $\langle\cdot,\cdot\rangle_\cR$ defined by
\begin{equation}\label{e:uip} \langle Uf,Uf\rangle_\cR=
\langle V(\cdot)^* f,V(\cdot)^* g\rangle_\cK,\quad f,g\in\cK,
\end{equation} in other words, $U$ is now an isometric isomorphism between the
Hilbert space $\cK$ and the inner product space $\cR$, hence 
$(\cR;\langle\cdot,\cdot\rangle_\cR)$ is a Hilbert space as well.

We now show that $(\cR;\langle\cdot,\cdot\rangle_\cR)$ is a reproducing kernel
Hilbert space with reproducing kernel $K$. Indeed, since for all $x,y\in X$ and all 
$h\in\cH_x$ we have $K_x(y)h=K(y,x)h=V(y)^*V(x)h$, it follows that $K_x\in\cR$ for
all $x\in X$. On the other hand, for arbitrary $f\in\cR$, $x\in X$, and $h\in\cH_x$,
we have
\begin{align*}\langle f,K_xh\rangle_\cR & = \langle V(\cdot)^*g,K_xh\rangle_\cR 
=\langle V(\cdot) g,V(\cdot)^*V(x)h\rangle_\cR \\
& =\langle g,V(x)h\rangle_\cK=\langle V(x)^*g,h\rangle_{\cH_x},
\end{align*} where $g\in\cH$ is the unique vector such that $V(x)^*g=f$. Thus, we
proved that $K$ is the reproducing kernel of $\cR$.

(b)$\Ra$(a). Let $(\cR;\langle\cdot,\cdot\rangle_\cR)$ be a 
reproducing kernel Hilbert space with reproducing kernel $K$. Using
the reproducing property (rk3), for arbitrary $n\in \NN$,
$x_1,\ldots, x_n\in X$, and $h_1\in \cH_{x_1},\ldots, h_n\in \cH_{x_n}$, 
we have
\begin{equation*} \sum_{i,j=1}^n \langle K(x_j,x_i)h_i,h_j
\rangle_{\cH_{x_j}} = \sum_{i,j=1}^n \langle K_{x_i}h_i,K_{x_j}h_j
\rangle_\cR = \|\sum_{i=1}^n K_{x_i}h_i\|_\cR^2\geq 0,
\end{equation*}
hence $K$ is positive semidefinite.

Due to the uniqueness property of the reproducing kernel Hilbert space associated 
to a positive semidefinite $\bH$-operator valued kernel $K$, it is natural to denote
this reproducing kernel Hilbert space by $\cR(K)$.

\section{A Direct Construction of $\cR(K)$.}
Given an arbitrary bundle of Hilbert 
spaces $\bH=\{\cH_x\}_{x\in X}$ and an $\bH$-operator valued kernel $K$,
we described the reproducing kernel Hilbert space $\cR(K)$ 
through a minimal linearisation of $K$, 
as in the proof of the implication 
(a)$\Ra$(b) of Theorem~\ref{t:rkh}, while a minimal Kolmogorov decomposition of 
$K$ was obtained as in the proof of the implication (a)$\Ra$(b) of 
Theorem~\ref{t:kolmogorov}. One of the unpleasant trait of the mentioned 
construction of the Kolmogorov decomposition, a GNS type construction in fact, 
is that, 
at a certain step, it makes a factorisation and hence, the obtained Hilbert space 
consists of equivalence classes of vector cross-sections. On the other hand, the reproducing 
kernel Hilbert space $\cH(K)$ consists solely of vector cross-sections and, as noted before,
it is a Kolmogorov decomposition as well,  hence it would be desirable to have a 
direct construction of it, independent of the Kolmogorov decomposition. Such a  
direct, but longer, 
construction, that yields simultaneously the reproducing kernel Hilbert space 
$\cR(K)$ and a minimal Kolmogorov decomposition of $K$, is more 
illuminating from certain points of view, and we describe 
it in the following.

Let $\cR_0$ be the range of the convolution operator $K$ defined at 
\eqref{e:conv}, more precisely, with the definition of the convolution operator $C_K$ as 
in \eqref{e:conv},
\begin{align}\label{e:rezero}
\cR_0 & =\{f\in\cF(X;\bH)\!\mid\! f=C_Kg\mbox{ for some }g\in\cF_0(X;\bH)\} \\
& = \{f\in\cF(X;\bH)\!\mid\! f_y=\sum_{x\in X}K(y,x)g_x\mbox{ for some }g\in
\cF_0(X;\bH),\mbox{ all }y\in X\}.\nonumber
\end{align}
A pairing $\langle\cdot,\cdot\rangle_{\cR_0}$ can be defined on $\cR_0$ by
\begin{equation}\label{e:iprezero} 
\langle e,f\rangle_{\cR_0}=\langle g,h\rangle_K=\langle C_Kg,h
\rangle_0=\sum_{y\in X} \langle e(y),h(y)\rangle_{\cH_y}=\sum_{x,y\in X}\langle 
K(y,x)g(x),h(y)\rangle_{\cH_y},
\end{equation} where $f=C_Kh$ and $e=C_Kg$ for some $g,h\in\cF_0(X;\bH)$.
We observe that, with the previous notation,
\begin{align}\label{e:pairzero} \langle e,f\rangle_{\cR_0} & = \sum_{y\in X} \langle e(y),h(y)
\rangle_{\cH_y} = \sum_{x,y\in X} \langle K(y,x)g(x),h(y)\rangle_{H_y} \\
& = \sum_{x,y\in X}\langle g(x),K(x,y)h(y)\rangle_{\cH_x} = \sum_{x\in X}\langle 
g(x),f(x)\rangle_{\cH_x},\nonumber
\end{align} which shows that the definition in \eqref{e:iprezero} is correct, that is,
it does not depend on $g$ and $h$ such that $e=C_Kg$ and $f=C_Kh$. 
In the following we prove that the pairing $\langle\cdot,\cdot\rangle_{\cR_0}$ is
an inner product. It is easy to verify the linearity in the first argument, conjugate 
symmetry, and nonnegativity. Hence, the Schwarz inequality holds as well.
In order to verify its positive definiteness, let $f\in\cR_0$ be 
such that $\langle f,f\rangle_{\cR_0}=0$. By the Schwarz inequality, it follows that 
$\langle f,f'\rangle_{\cR_0}=0$ for all $f'\in \cR_0$. For arbitrary $x\in X$ and $h\in 
\cH_x$ consider the cross-section $\widehat h\in \cF_0(X;\bH)$ defined as in 
\eqref{e:deltax}.
 Letting $f'=C_K\widehat  h\in
\cR_0$, we thus have
\begin{equation*} 0=\langle f,f'\rangle_{\cR_0}=\langle f,C_K\widehat  h\rangle_0=
\sum_{x\in X}\langle f_y,(\widehat  h)_y\rangle_{\cH_y}=\langle f_x,h\rangle_{\cH_x},
\end{equation*} hence, since $x\in X$ and $h\in\cH_x$ are arbitrary, it follows that 
$f=0$. Thus, 
$(\cR_0;\langle\cdot,\cdot\rangle_{\cR_0})$ is an  inner product space contained
in $\cF(X;\bH)$.

For any $x\in X$ and $h\in\cH_x$, we consider the vector cross-section 
$\widehat  h\in\cF_0(X;\bH)$ defined at \eqref{e:deltax} and note that
\begin{equation}\label{e:kax} (C_K\widehat  h)(y)=\sum_{z\in X} K(y,z)(\widehat  h)(z)
=K(y,x)h=K_x(y)h,\quad y\in X,\end{equation} 
that is, $C_K\widehat  h=K_x h$, which shows that $K_xh\in \cR_0$. 
On the other hand, for any $f\in\cR_0$, hence $f=C_Kg$ for some 
$g\in\cF_0(X;\bH)$, we have
\begin{equation} f(y)=\sum_{x\in X}K(y,x)g(x)=\sum_{x\in X}K_x(y)g(x),\quad y\in X,
\end{equation} hence $\cR_0=\lin\{K_xy\mid x\in X,\ h\in\cH_x\}$. In addition,
\begin{equation*} \langle f,K_xh\rangle_{\cR_0}  = \langle 
f,C_K\widehat  h\rangle_{\cR_0} 
 = \sum_{y\in X}\langle f(y),(\widehat  h)(y)\rangle_{\cH_y}=\langle f(x),h
\rangle_{\cH_x}.
\end{equation*}
Thus, the inner product space $(\cR_0;\langle\cdot,\cdot\rangle_{\cR_0})$ has
all  properties (rk1)--(rk3), as well as a modified version of the minimality 
property (rk4), except the fact that it is a Hilbert space.

By the standard procedure, let $(\cR;\langle\cdot,\cdot\rangle_{\cR})$ be an 
abstract completion of the inner product space 
$(\cR_0;\langle\cdot,\cdot\rangle_{\cR})$ to a Hilbert space. In order to finish this
construction, all we have to prove is that we can always choose $\cR\subseteq 
\cF(X;\bH)$, in other words, this Hilbert space abstract completion can always be 
realised inside $\cF(X;\bH)$. Once this done, after a moment of thought and taking 
into account that
$(\cR_0;\langle\cdot,\cdot\rangle_{\cR})$ essentially has all properties 
(rk1)--(rk4), we can see that $(\cR;\langle\cdot,\cdot\rangle_{\cR})$ is the 
reproducing kernel Hilbert space with reproducing kernel $K$.

Now, in order to prove that the Hilbert space abstract completion of 
$(\cR_0;\langle\cdot,\cdot\rangle_{\cR_0})$ can be realised within $\cF(X;\bH)$, we
can take at least two paths.
One way is to use the existence part of the reproducing kernel Hilbert 
space associated to $K$, a consequence of Theorem~\ref{t:rkh}. A second, more 
direct way, is to show that any Cauchy sequence, with respect to 
$\|\cdot\|_{\cR_0}$, with elements in $\cR_0$, converges pointwise on X to a 
vector cross-section in $\cF(X;\bH)$ and that this vector cross-section can be taken as 
the strong limit of the sequence as well. 

\section{Proof of Theorem~\ref{t:evop}} (a)$\Ra$(b). Let $x\in X$ be fixed, but arbitrary.
It was already observed in Subsection~\ref{ss:ovk}
that, if $\cH_K$ is the reproducing kernel Hilbert space in $\cF(X;\bH)$ with kernel $K$, 
then by the reproducing property, we have
\begin{equation*} \langle f(x),h\rangle_{\cH_x}=\langle f,K_xh\rangle_{\cH_K},\quad f\in \cH_K,\ h\in \cH_x,
\end{equation*}
where $K_x\colon \cH_x\ra \cH_K$ is the linear operator defined by $K_x h:=K(\cdot,x)h$, see the axiom (rk2). Since, 
by axiom (rk2), $K_x h\in \cH_K$ for all $h\in\cH_x$, the operator 
$K_x$ is correctly defined. It is a bounded operator because
\begin{equation}\label{e:kix} 
\|K_x h\|^2_{\cH_K}=\langle K_x h,K_x h\rangle_{\cH_K}=\langle (K_xh)(x),h\rangle_{\cH_x}
=\langle K(x,x)h\rangle_{\cH_x}\leq \|K(x,x)\| \|h\|_{\cH_x},
\end{equation}
where we have used the reproducing property (rk3). 

Finally, again by the reproducing property (rk3), for any $f\in\cH_K$ and any $h\in\cH_x$ we have
\begin{equation*}
\langle f(x),h\rangle_{\cH_x}=\langle f,K_x h\rangle_{\cH_K}=\langle K_x^* f,h\rangle_{\cH_x},
\end{equation*}
hence the evaluation operator $\cH_K\ni f\mapsto f(x)\in\cH_x$ coincides with $K_x^*$ and hence it is bounded.

(b)$\Ra$(a). For arbitrary $x\in X$, let $\mathrm{Ev}_x\colon \cH\ra\cH_x$ be the evaluation operator 
$\mathrm{Ev}_x f:=f(x)$, for all $f\in \cH$. By assumption, $\mathrm{Ev}_x$ is a bounded operator for all $x\in X$. 
We consider the $\bH$-valued kernel
\begin{equation*} K(y,x)=\mathrm{Ev}_y \mathrm{Ev}_x^*,\quad x,y\in X.
\end{equation*}
From Theorem~\ref{t:kolmogorov} it follows that $K$ is a positive semidefinite $\bH$-valued kernel and hence,
by Theorem~\ref{t:rkh}, there exists and it is unique, the reproducing kernel Hilbert space $\cH_K$ with kernel $K$.
In the following we show that $\cH$ is the reproducing kernel Hilbert space with kernel $K$.

The axiom (rk1) holds, by assumption. For the axiom (rk2), let us observe that, for all $x\in X$ and $h\in\cH_x$, we 
have 
\begin{equation*}(K_xh)(y)=K(y,x)h=\mathrm{Ev}_y \mathrm{Ev}_x^* h=(\mathrm{Ev}_x^*h)(y),\quad y\in X,
\end{equation*}
hence $K_xh=\mathrm{Ev}_x^*h\in \cH_K$. This proves that the axiom (rk2) holds and, in addition, that
\begin{equation*}
K_x^*=\mathrm{Ev}_x,\quad x\in X.
\end{equation*}
Finally, for the axiom (rk3), let $f\in \cH$, $x\in X$, and $h\in \cH_x$ be arbitrary. Then,
\begin{equation*}
\langle f(x),h\rangle_{H_x}=\langle \mathrm{Ev}_x f,h\rangle_{\cH_x}=\langle f,\mathrm{Ev}_x^* h\rangle_{\cH_K}
=\langle f,K_x h\rangle_{\cH_K}.
\end{equation*}
This shows that the axiom (rk3) holds as well.

Finally, by the uniqueness of the reproducing kernel Hilbert space associated to $K$, it follows that 
$\cH=\cH_K$.

\end{document}